\def\2{C^{1,2}(\R\times\R^N)}
\def\e{\epsilon}
\def\L{\mathcal{L}}
\def\Z{\mathbb{Z}}
\def\R{\mathbb{R}}
\def\N{\mathbb{N}}
\def\tilde{\widetilde}
\def\epsilon{\varepsilon}
\let\vp=\varphi
\def\trait (#1) (#2) (#3){\vrule width #1pt height #2pt depth #3pt}
\def\fin{\hfill\trait (0.1) (5) (0) \trait (5) (0.1) (0) \kern-5pt \trait (5) (5)
(-4.9) \trait (0.1) (5) (0)}
\def\1{\mathbbm{1}}
\newtheorem{thm}{Theorem}[section]
\newtheorem{lem}[thm]{Lemma}
\newtheorem{prop}[thm]{Proposition}
\theoremstyle{definition}
\newtheorem{defi}[thm]{Definition}
\newtheorem{rmk}{Remark}
\newtheorem{hyp}{Hypothesis}
\def\lm#1{\left\lfloor{#1}\right\rfloor}
\def\um#1{\left\lceil{#1}\right\rceil}
\def\l{\lambda_{1}(\L,\R)}
\begin{document}
\title{Generalized transition fronts for one-dimensional almost periodic Fisher-KPP equations.}

\author{Gr\'egoire Nadin 
\thanks{CNRS, UMR 7598, Laboratoire Jacques-Louis Lions, F-75005 Paris, France}  
\and Luca Rossi 
\thanks{ Dipartimento di Matematica,  Universit\`a di Padova, via Trieste 
63, 35121 Padova, Italy}
}
\maketitle

\date{}
\relpenalty=10000
\binoppenalty=10000

\begin{abstract} 
This paper investigates the existence of generalized
transition fronts for Fisher-KPP equations in one-dimensional, almost periodic
media. Assuming that the linearized elliptic operator near the unstable steady 
state admits an almost periodic eigenfunction,
we show that such fronts exist if and only if their average speed is
above an explicit threshold. This hypothesis is satisfied in particular when 
the reaction term does not depend on $x$ or (in some cases) is small 
enough.
Moreover, except for the threshold case, the
fronts we construct and their speeds are almost
periodic, in a sense.

When our hypothesis is no longer satisfied, such generalized transition 
fronts still exist for an interval of average speeds, with explicit bounds.

Our proof relies on the construction of sub and super
solutions based on an accurate analysis of the properties of the generalized
principal eigenvalues. 

\end{abstract}

\noindent {\bf Key-words:} generalized transition fronts, almost periodic,  
Fisher-KPP equation, generalized principal eigenvalues. 

\medskip

\noindent {\bf AMS classification:} 35B08, 35B15, 35B40, 35B50, 35C07, 35K57, 47E05

\medskip

The research leading to these results has received funding from the 
European Research Council
under the European Union's Seventh Framework Programme (FP/2007-2013) / ERC 
Grant
Agreement n.321186 - ReaDi -Reaction-Diffusion Equations, Propagation and 
Modelling.

\section{Introduction and main results}

\subsection{Introduction}

We are concerned with one-dimensional Fisher-KPP equations of the type
\begin{equation} \label{eqprinc}
 u_t - \big(a(x)u_x\big)_x = c(x) u(1-u), \quad t\in\R,\ x\in\R,
\end{equation}
with coefficients $a\in \mathcal{C}^1(\R)$, $c\in 
\mathcal{C}(\R)$ satisfying the following assumptions:
$$\inf_\R a >0,\qquad \inf_{\R}c>0,\qquad a,a',c\ \text{ are almost
periodic}.$$
These hypotheses will always be understood throughout the paper without further
mention.
We use Bochner's definition of almost periodic functions. 

\begin{defi} \cite{Bochner}\label{defalmostper} A function $a:\R\to\R$ is
\emph{almost periodic} (a.p.~in the sequel) if from any sequence
$(x_n)_{n\in\N}$ in $\R$ one can extract a subsequence
$(x_{n_k})_{k\in\N}$ such that $a(x_{n_k}+x)$ converges uniformly
in $x\in\R$. 

A function $U=U(z,x)$ is a.p.~in $x$ uniformly with respect to 
$z\in\R$ if from any sequence
$(x_n)_{n\in\N}$ in $\R$ one can extract a subsequence
$(x_{n_k})_{k\in\N}$ such that $U(z,x_{n_k}+x)$ converges uniformly
in $(z,x)\in\R\times\R$. 

\end{defi}

When $a$ and $c$ does not depend on $x$, this equation was investigated in the 
pioneering  papers of Kolmogorov, Petrovsky and Piskunov \cite{KPP} 
and Fisher \cite{Fisher}, who addressed the issue of the existence of 
special solutions, called {\em travelling fronts}: positive solutions 
of (\ref{eqprinc}) of the form $u(t,x)=U(x-wt)$ satisfying $U(-\infty)=1$ and 
$U(+\infty)=0$, with unknowns $U$ (the profile) and $w$ (the speed). Such 
solutions exist if and only if $w\geq 2\sqrt{ac}=:w^*$. Moreover, 
the front with minimal speed $w^*$ attracts, in a sense, the solutions of the 
Cauchy problem associated with the initial
datum $\1_{(-\infty,0)}$, see \cite{KPP}. 
The existence of travelling fronts was extended to more
general types of nonlinearities and multi-dimensional equations by Aronson and Weinberger \cite{AronsonWeinberger}.

When $a$ and $c$ are positive functions of $x$ which are periodic, 
with the same period, such a heterogeneity should be taken into account in the 
definition of the front, giving rise to the notion of {\em pulsating 
travelling front}: a positive solution of the form 
$u(t,x)=U(x-wt,x)$, where $U=U(z,x)$ is periodic in $x$, $U(-\infty,x)=1$ and 
$U(+\infty,x)=0$ uniformly in $x$. 
Analogously to the case of $x$-independent coefficients, pulsating 
travelling fronts exist if and only if $w\geq w^{*}$, where now $w^*>0$ is 
expressed in terms of the periodic principal eigenvalues of some linear operators, see 
\cite{BerestyckiHamel}. This result holds true for more general types of 
nonlinearities and in multidimensional media \cite{BerestyckiHamel, BHR2, 
Xinperiodic}, as well as when the coefficients are not only periodic in $x$ but 
also in $t$ \cite{Nadinptf, NolenRuddXin} . 

An increasing attention has been paid to the case of general heterogeneous coefficients in the 2000's. 
A generalization of the notion of travelling fronts has been given by Berestycki
and Hamel \cite{BerestyckiHamelgtw,BerestyckiHamelgtw2}.

\begin{defi}\label{def-gtf}\cite{BerestyckiHamelgtw, BerestyckiHamelgtw2}
A {\em generalized transition front} of equation (\ref{eqprinc}) is a 
time-global solution $u$ for which there exists a function $X:\R\to\R$ such 
that 
\begin{equation}\label{limits}
\lim_{x\to-\infty} u(t,x+X(t))=1, \quad
\lim_{x\to+\infty}u(t,x+X(t))=0,\quad \hbox{uniformly in } t\in\R. 
\end{equation}
We say that $u$ has an {\em average speed} $w\in\R$ if it holds true that
\[
\lim_{t-s\to+\infty}
\frac{X(t)-X(s)}{t-s}=w.
\]
\end{defi}

Travelling fronts and pulsating travelling fronts are particular 
instances of generalized transition fronts, with $X(t)=wt$. 

The existence of generalized transition fronts for 1-dimensional 
heterogeneous reaction-diffusion equations with ignition-type nonlinearities 
has been derived in \cite{NolenRyzhik, MelletRoquejoffreSire}, and their 
stability in \cite{MelletNolenRoquejoffreRyzhik}. It is also known for bistable 
time-heterogeneous equations \cite{Shenbistable}. 
For Fisher-KPP equations such as (\ref{eqprinc}), generalized transition waves do not exist
in general when the reaction term $c=c(x)$ is heterogeneous \cite{NRRZ}, but they do if only the diffusion term $a=a(x)$ is heterogeneous \cite{ZlatosKPP} or if the coefficients $a=a(t)$ and $c=c(t)$ are time heterogeneous \cite{NR1} (see also \cite{Shenue} dealing with time uniquely ergodic coefficients). 
This leads to an alternative notion of
{\em critical travelling wave} given by the first author in \cite{NadinCTW} for
1-dimensional equations. 
The situation is much more complicated in multi-dimensional media. Some 
conditions have been provided guaranteeing the existence of generalized 
transition fronts 
for partially periodic equations of ignition-type \cite{Zlatosdisordered} or 
for space-time heterogeneous Fisher-KPP equations under a space periodicity 
assumption \cite{NR2, RossiRyzhik, Shenueper}. However, for general 
heterogeneous ignition-type equations in $\R^{N}$, $N\geq 3$, generalized 
transition fronts do not exist in general \cite{Zlatoscex}, unlike the 
1-dimensional framework. 

In order to go further in the investigation of fronts in heterogeneous 
Fisher-KPP equations, we are thus lead to consider specific classes of 
heterogeneity since generalized transition fronts do not exist in general for 
such equations. The aim of the present paper is to understand the case of 
spatial a.p.~coefficients in dimension $1$. 
The only related results in the literature we are aware of are Shen's 
\cite{Shenap1, Shenap2}, concerning bistable equations with a time a.p.~reaction 
term, a typical example being 
$$u_{t} =u_{xx}+ u(1-u) \big( u - \theta (t)\big)$$
where $\theta$ is an a.p.~function such that $0<\theta<1$ in $\R$. 
This is quite a different problem, but we note that the type of solutions 
obtained by Shen are indeed similar to the one of Theorem~\ref{thm:gtf} below. 
However, the construction is very different: Shen uses a 
stability approach, which is well-fitted for bistable equations, combined with a 
parabolic zero number argument, while in the present paper we use a sub and 
super solutions method. We also mention here the work of Lou and Chen \cite{LouChen}
on curved travelling fronts for the curvature flow equation with space a.p. coefficients, which is known to be an asymptotic limit of bistable equations. 

Our approach here is inspired by the case of periodic coefficients 
(see \cite{Nadinptf} for example), where the sub and super solutions are 
constructed starting from exponential solutions of the linearized equation 
around the unstable steady state $u\equiv 0$. In the periodic case, this 
naturally leads to consider some eigenvalue problems for linear operators. The 
main difficulty here is that it is not clear whether an elliptic operator with 
a.p.~coefficients admits classical eigenvalues and, even when it 
does, there could exist multiple {\em bounded} eigenfunctions and they might 
not be 
a.p.~\cite{Rossiap}. We will thus make use of particular 
exponentially decreasing solutions of linear problems (see Proposition 
\ref{prop-phigamma}) and of generalized principal eigenvalues (defined by 
\eqref{eq:lambda1}-\eqref{eq:lambda12} below).


\subsection{Statement of the results}

Our characterization of the minimal average speed of generalized transition 
fronts involves, as in periodic media \cite{BHR2, Nadinptf}, the 
eigenfunctions of the linearized operator near the unstable steady state 
$u\equiv 0$, that is,
\begin{equation} \label{def-La}
 \L \phi := \big(a(x) \phi'\big)' + c(x)\phi.
\end{equation}
Unlike in the periodic case, here we do not dispose of the compactness 
properties that allow one to define the eigenvalues in a classical sense. In 
particular, we cannot apply the Krein-Rutman theorem providing the principal 
eigenvalue. Thus, we consider the following notion of {\em generalized
principal eigenvalue}, introduced by Berestycki, Nirenberg and Varadhan in 
\cite{BNV}:
\begin{equation}\label{l1}
 \lambda_1:= \inf \{ \lambda\in\R, \ \exists \phi \in\mathcal{C}^2 
(\R),\ \phi>0,\ \L\phi \leq \lambda \phi \ \hbox{ in } \R\}.
\end{equation}
An equivalent definition was previously given by Agmon \cite{A1}
for divergence form operators on Riemannian
manifolds and, for general operators, by Nussbaum and Pinchover 
\cite{NP}, building on a result by Protter and Weinberger 
\cite{Max2}.
In the sequel, we will also make use of a characterization through a Rayleigh
quotient derived in~\cite{A1}, which readily implies that 
$\lambda_{1}$ 
is the infimum of the spectrum of $-\L$ on $L^{2}(\R)$, and also yields 
$\lambda_{1}\geq\inf c$ (see Proposition~\ref{lem-caraclambda1} below). 
In particular, under the standing assumptions, there holds
$$\lambda_1>0.$$

Besides the almost periodicity of the coefficients, we will assume the 
following.

\begin{hyp}\label{hyp}
 The operator $\L$ admits an a.p.~positive eigenfunction, that is, there exists 
an a.p.~positive function $\vp_{1} \in \mathcal{C}^{2}(\R)$ such that $\L 
\vp_{1}
= \lambda_{1} \vp_{1}$ in $\R$. 
\end{hyp}

The fact that the eigenvalue associated with an a.p.~positive eigenfunction is 
necessarily the generalized principal eigenvalue $\lambda_{1}$ follows from 
a Bloch-type property (see 
\cite[Theorem~1.7]{BerRossipreprint}). 
In the periodic case, Hypothesis \ref{hyp} trivially holds with $\vp_1$ equal 
to 
the periodic principal eigenvalue of $\L$.
We will see in Section \ref{sec:hyp} that Hypothesis \ref{hyp} is equivalent to 
require the existence of a bounded eigenfunction with positive infimum.
Let us point out that, for arbitrary operators, there always exists 
a positive solution of $\L \vp=
\lambda\vp_{1}$ in $\R$ \cite{A1,BerRossipreprint}, but it might not be a.p.,
bounded, nor with a positive infimum.

Checking Hypothesis \ref{hyp} is actually a difficult task in general. 
Sorets and Spencer \cite{SoretsSpencer} showed that, when $a\equiv 1$ and $c(x)
= K \big(\cos (2\pi x) + \cos(2\pi\alpha x)\big)$ with $\alpha \notin \mathbb{Q}$ and $K$
large enough, the Lyapounov exponent of 
$ \phi (n+1)-2\phi (n)+\phi (n-1)+c(n) \phi (n)= \lambda \phi(n)$
in $\Z$ is 
strictly positive, for any $\lambda\in\R$. This implies, through Ruelle-Oseledec's theorem, that 
any solution of this equation should either blow up or decay to zero 
exponentially, contradicting a possible almost periodicity. This example is 
especially striking because the above function~$c$ is quasiperiodic.
It involves the discrete Laplace operator, but we believe that such 
a phenomenon could also occur for the continuous equation $\L 
\phi = \lambda_{1}\phi$. Actually, \cite{SoretsSpencer} also deals with the
continuous Laplace operator, but excludes in this framework some eigenvalues and we were not able 
to determine if the principal eigenvalue $\lambda_{1}$ is among them.

On the other hand, Hypothesis \ref{hyp} holds in two relevant 
cases, where generalized transition fronts were not previously obtained:
\begin{enumerate}
 \item $c$ is constant.
 \item $a$ and $c$ are {\em quasiperiodic} and their periods satisfy the
non-degeneracy diophantine condition (\ref{dioph}) below. 
 \end{enumerate}
Indeed, in the first case Hypothesis \ref{hyp} holds because $\L$ admits 
constant eigenfunctions. The second 
case follows from a result by Kozlov \cite{Kozlov}, that we recall in 
Section \ref{sec:hyp}. 
The optimality of Hypothesis \ref{hyp}, and of a weaker one based on 
the theory of critical operators, will be discussed in Section \ref{sec:hyp}

\bigskip

We will need the following description of the other 
eigenlevels.

\begin{prop} \label{prop-phigamma} 
The following properties hold for all $\gamma>\lambda_{1}$:
\begin{enumerate}[(i)]
 \item There exists a unique positive 
solution $\phi_\gamma \in \mathcal{C}^2 (\R)$ of 
\begin{equation}\label{eq-prop-phigamma}
\L\phi_{\gamma}= \gamma \phi_{\gamma} \ \hbox{
in } \R, \quad \phi_{\gamma}(0)=1, \quad \lim_{x\to +\infty} 
\phi_{\gamma}(x)=0. \end{equation}
\item There exists the limit $\mu (\gamma) := 
-\lim_{x\to \pm\infty} \frac{1}{x}\ln \phi_\gamma (x)>0$.
\end{enumerate}
\end{prop}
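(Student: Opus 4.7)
My plan is to construct $\phi_{\gamma}$ as a limit of solutions on expanding intervals, derive uniqueness from the one-dimensionality of the stable subspace of the ODE, and identify $\mu(\gamma)$ with the Bohr mean of the almost periodic logarithmic derivative $-\phi_{\gamma}'/\phi_{\gamma}$.

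For part (i), I would first invoke \eqref{l1}: since $\gamma>\lambda_{1}$, there is a $\psi\in C^{2}(\R)$ with $\psi>0$ and $\L\psi\leq\lambda\psi<\gamma\psi$ for some $\lambda\in(\lambda_{1},\gamma)$, so $(\L-\gamma)\psi<0$ is a strict positive super solution. The Dirichlet principal eigenvalue $\lambda_{1,n}^{D}$ of $\L$ on $(-n,n)$ satisfies $\lambda_{1,n}^{D}\nearrow\lambda_{1}<\gamma$, so the Berestycki--Nirenberg--Varadhan maximum principle for $\L-\gamma$ is valid on each $(-n,n)$. I solve the Dirichlet problem $(\L-\gamma)\phi_{n}=0$ with $\phi_{n}(-n)=\psi(-n)$, $\phi_{n}(n)=0$; comparison with $\psi$ gives $0<\phi_{n}\leq\psi$. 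Renormalizing so that $\phi_{n}(0)=1$, Harnack's inequality keeps $\phi_{n}$ locally uniformly bounded, and elliptic regularity yields a $C^{2}_{\text{loc}}$-convergent subsequence whose limit $\phi_{\gamma}$ is a positive solution with $\phi_{\gamma}(0)=1$ and $\L\phi_{\gamma}=\gamma\phi_{\gamma}$. The decay $\phi_{\gamma}(+\infty)=0$ passes from the constraint $\phi_{n}(n)=0$ to the limit via an exponential barrier available since $\gamma>\lambda_{1}$. For uniqueness, any other such solution lies in the two-dimensional solution space of the ODE, whose stable subspace at $+\infty$ is one-dimensional because $\gamma>\lambda_{1}$ yields an exponential dichotomy for the associated first-order system; hence the solution is proportional to $\phi_{\gamma}$ and equals it after the shared normalization.

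For part (ii), I set $\mu(x):=-\phi_{\gamma}'(x)/\phi_{\gamma}(x)$, so that $-\ln\phi_{\gamma}(x)/x=\frac{1}{x}\int_{0}^{x}\mu(t)\,dt$, and the problem reduces to the existence of the limit of this average. The key step is to prove that $\mu$ itself is almost periodic; Bohr's mean value theorem then yields $\lim_{x\to\pm\infty}\frac{1}{x}\int_{0}^{x}\mu=M(\mu)=:\mu(\gamma)$. To prove almost periodicity of $\mu$, I observe that by uniqueness in part (i) the translated function $x\mapsto\phi_{\gamma}(x+y)/\phi_{\gamma}(y)$ is the unique normalized positive decaying solution associated with the shifted coefficients $(a(\cdot+y),c(\cdot+y))$. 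Given any sequence $(y_{n})$, the almost periodicity of $a,a',c$ provides a subsequence along which the shifted coefficients converge uniformly on $\R$; continuous dependence of $\phi_{\gamma}$ on the coefficients (through elliptic estimates and the uniqueness of part (i) applied to the limit problem) then yields locally uniform convergence of the shifted solutions, hence of $\mu(\cdot+y_{n_{k}})$. Uniform $L^{\infty}$ bounds on $\mu$ and $\mu'$ read off the Riccati equation $\mu'=\mu^{2}-(a'/a)\mu-(\gamma-c)/a$ upgrade this to uniform convergence on all of $\R$, which is the Bochner criterion for almost periodicity. The positivity $\mu(\gamma)>0$ follows from the exponential barrier used in part (i), which gives $\phi_{\gamma}(x)\leq Ce^{-\mu_{0}x}$ at $+\infty$ for some $\mu_{0}>0$, so $M(\mu)\geq\mu_{0}>0$.

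The hardest step will be the uniform (rather than merely locally uniform) convergence of the translates $\mu(\cdot+y_{n_{k}})$ on all of $\R$, which Bochner's definition demands. This requires uniform-in-$y$ exponential control on the shifted solutions at both $\pm\infty$, and keeping track of the dependence on $y$ through the Riccati equation and the a.p.~bounds on the coefficients will be the delicate part of the bookkeeping.
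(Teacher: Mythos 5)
The roadmap you propose matches the paper's in spirit — build $\phi_\gamma$ as a limit of Dirichlet solutions on expanding intervals, derive uniqueness from an asymmetry between growing and decaying solutions, then identify $\mu(\gamma)$ as the Bohr mean of the a.p.~function $-\phi_\gamma'/\phi_\gamma$. But there is a genuine gap in part (ii) at exactly the spot you flag as ``bookkeeping.'' To invoke Bohr's mean value theorem you need $\sigma_\gamma:=-\phi_\gamma'/\phi_\gamma$ to be almost periodic, which by Bochner's criterion means: for every sequence $(x_n)$ there is a subsequence along which $\sigma_\gamma(\cdot+x_{n_k})$ converges \emph{uniformly on all of $\R$}. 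Your argument produces, via continuous dependence and uniqueness of the limit problem, only locally uniform convergence. You then claim that uniform $L^\infty$ bounds on $\sigma_\gamma$ and $\sigma_\gamma'$ from the Riccati equation ``upgrade this to uniform convergence on all of $\R$.'' That is false as stated: boundedness plus equicontinuity gives you Arzel\`a--Ascoli and locally uniform subsequential limits, nothing more. The upgrade to global uniform convergence is the whole difficulty, and no amount of pointwise-in-$x$ bounds closes it. The paper's Lemma~\ref{lem-ap} handles this by contradiction: suppose the convergence is not uniform, pick points $y_n$ witnessing the failure, translate everything by $x_n+y_n$, pass to the limit to produce two distinct positive decaying solutions of the same limit equation, and contradict the uniqueness from part (i). That contradiction argument, not a bound, is the missing idea.

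Two smaller remarks. In part (i), your uniqueness argument via ``exponential dichotomy'' for the first-order system is plausible but under-justified for a.p.~coefficients; establishing the dichotomy (or the Wronskian-based equivalent, that a second independent solution must grow) requires an argument. The paper instead gets uniqueness directly from the exponential a priori bound for subsolutions decaying at $+\infty$ (Lemma~\ref{lem:Nolen}) applied to the difference of two candidate solutions, which is shorter and avoids the dichotomy machinery. Also, in your construction you impose $\phi_n(-n)=\psi(-n)$ and then renormalize so $\phi_n(0)=1$; after renormalization the comparison $\phi_n\le\psi$ is lost, so the exponential decay at $+\infty$ must come from a barrier for positive solutions vanishing at the right endpoint with $\phi_n(0)=1$ fixed (again, essentially Lemma~\ref{lem:Nolen}), not from $\psi$. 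The paper sidesteps this by working on $(0,R)$ with the normalization built in at the boundary, which also makes the family $(\phi_\gamma^R)_R$ monotone in $R$ and avoids extracting subsequences.
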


We are now in position to state our main result. 

\begin{thm}\label{thm:gtf}
Let $\mu(\gamma)$ be as in Proposition \ref{prop-phigamma}. Then, 
under Hypothesis \ref{hyp}, there exists 
$$w^*:= \min_{\gamma>\lambda_1}\, \frac{\gamma}{\mu (\gamma)}\,>0$$
and the following properties hold:
\begin{enumerate}[(i)]
 \item For all $w\geq w^*$ there exists a time-increasing generalized 
transition front with average speed $w$; 
for $w>w^*$, the front can be written as $u(t,x) = U 
(\int_0^x\sigma -t,x)$, where 
$\sigma\in\mathcal{C} (\R)$ is a.p.~and has average $1/w$ and 
$U=U(z,x)$
is a.p.~in $x$ uniformly in $z\in\R$.

 \item There are no generalized transition fronts with average speed $w<w^*$.
\end{enumerate}
\end{thm}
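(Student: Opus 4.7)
The plan is to mimic the periodic-case strategy of \cite{BHR2, Nadinptf}, building the fronts from sub- and super-solutions expressed via the exponentially decaying solutions $\phi_\gamma$ of Proposition~\ref{prop-phigamma}. First I would analyze $\gamma\mapsto \gamma/\mu(\gamma)$ on $(\lambda_1,\infty)$: Hypothesis~\ref{hyp} forces $\mu(\gamma)\to 0$ as $\gamma\to \lambda_1^+$ (the a.p.~eigenfunction $\varphi_1$ is bounded below, so the limit of $\phi_\gamma$ cannot decay exponentially), while $\mu(\gamma)\sim \sqrt{\gamma/\inf a}$ as $\gamma\to \infty$ by standard ODE bounds. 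Both limits give $\gamma/\mu(\gamma)\to +\infty$, so the minimum $w^*>0$ is attained at some $\gamma^*>\lambda_1$.

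For existence with $w>w^*$, pick $\gamma>\lambda_1$ with $w=\gamma/\mu(\gamma)$ and set $v(t,x):=\phi_\gamma(x)\,e^{\gamma t}$. This solves $v_t=\L v$ exactly, and since $c(x)u(1-u)\le c(x)u$ on $[0,1]$, the capped function $\bar u(t,x):=\min(1,v(t,x))$ is a super-solution of \eqref{eqprinc}. The level set $\{\bar u=1\}$ lies asymptotically along $x=wt$ by Proposition~\ref{prop-phigamma}(ii). I would construct a candidate front $u^*$ as the monotone limit of the solutions $u_n$ of the Cauchy problem on $(-n,\infty)\times\R$ with initial data $\bar u(-n,\cdot)$: comparison gives $u_n\le\bar u$, the time-monotonicity of $\bar u$ (which holds by the KPP structure) transfers to monotonicity of $n\mapsto u_n$, and parabolic interior estimates allow passage to the limit.

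The delicate step is to exclude $u^*\equiv 0$ and verify \eqref{limits}. Here I would combine two ingredients: a compactly supported \emph{seed} sub-solution $\underline u_0(x)=\e\psi(x)$, with $\psi$ a principal eigenfunction of $\L$ on a large interval whose eigenvalue is close to $\lambda_1>0$, which instabilizes $u\equiv 0$; and lower barriers built from $\phi_\gamma$ (e.g.~$\max(0,\phi_\gamma e^{\gamma t}-B\phi_{\gamma'}e^{\gamma' t})$ for $\gamma'>\gamma$ suitably close, calibrated so that the nonlinear error $c\underline u^2$ is dominated by the stronger decay $\phi_{\gamma'}$ in the region where $\underline u>0$). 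Sandwiching $\underline u\le u^*\le\bar u$ and ODE-squeezing of the logistic equation near $u=1$ then yields $u^*(t,X(t)+x)\to 1$ as $x\to -\infty$ uniformly in $t$. The prescribed form $u^*(t,x)=U(\int_0^x\sigma-t,x)$ is obtained by factoring $\phi_\gamma(x)=\exp(-\int_0^x\sigma_\gamma)\,p_\gamma(x)$ with $\sigma_\gamma,p_\gamma$ a.p.; the almost periodicity of the Riccati variable $\phi_\gamma'/\phi_\gamma$ is inherited from Hypothesis~\ref{hyp} through a Bloch-type argument in the spirit of \cite[Theorem~1.7]{BerRossipreprint}. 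Rescaling $\sigma:=\sigma_\gamma/\mu(\gamma)$ yields average $1/w$, and Bochner extraction on translates $u^*(t,x+x_k)$, combined with uniqueness of the sandwiched limit, transfers almost periodicity from the barriers to $u^*$.

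The threshold $w=w^*$ follows from a diagonal limit $w_n\downarrow w^*$ using uniform parabolic estimates, at the cost of the explicit a.p.~factorization. For part~(ii), a generalized transition front with average speed $w<w^*$ would, via parabolic Harnack in the moving frame $x-X(t)$ and comparison with $\bar u$, match the spatial decay rate of some $\phi_\gamma$ with $\gamma/\mu(\gamma)\le w$, contradicting the definition of $w^*$. \textbf{The main obstacle} is the sub-solution/lower-bound analysis of the third paragraph: in the a.p.~setting one lacks the Floquet--Bloch compactness of the periodic case, so uniform control of $\phi_\gamma$ and its derivatives at both $\pm\infty$ is essential, and this is precisely where Hypothesis~\ref{hyp} enters to guarantee the almost periodicity of the Riccati variables and of the resulting lower barriers.
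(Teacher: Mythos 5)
Your overall scaffolding — the super-solution $\overline u=\min(1,\phi_\gamma e^{\gamma t})$, the monotone Cauchy-problem limit from initial data $\overline u(-n,\cdot)$, the change of variable $U(z,x)=u(\frac1\gamma\int_0^x\sigma_\gamma-z,x)$, and the reduction of the critical case $w=w^*$ to a limiting argument — matches the paper. But the central step, the lower barrier, has a genuine gap, and it is precisely the one you flag as the ``main obstacle''.

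You propose the sub-solution $\underline u=\max\bigl(0,\phi_\gamma e^{\gamma t}-B\phi_{\gamma'}e^{\gamma' t}\bigr)$ with $\phi_{\gamma'}$ the exponentially decaying eigenfunction for the level $\gamma'$. This cannot work, for a structural reason independent of almost periodicity: since both $\phi_\gamma e^{\gamma t}$ and $\phi_{\gamma'}e^{\gamma' t}$ solve the linearization $v_t=\L v$ exactly, on the positivity set one has $\underline u_t-(a\underline u_x)_x-c\underline u=0$, hence $\underline u_t-(a\underline u_x)_x-c\underline u(1-\underline u)=c\,\underline u^2\ge 0$, with the \emph{wrong} sign; there is no spectral gap to absorb the quadratic error. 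In the constant-coefficient case, the classical construction uses as the correction not another eigensolution but an exponential $e^{-\lambda'(x-wt)}$ with $\lambda'$ strictly between the two roots of the dispersion relation at speed $w$, which is a \emph{strict} super-solution of the linearization. In the a.p.~setting the analogue would be a positive $\psi$ with $\L\psi\le\bigl((1+\e)\gamma-\delta\bigr)\psi$ decaying at rate $(1+\e)\mu(\gamma)$, and the difficulty (which the periodic Floquet--Bloch theory hides) is to ensure $\psi$ compares two-sidedly with $\phi_\gamma^{1+\e}$. The paper resolves this by constructing $\psi=\theta\,\phi_\gamma^{1+\e}$ where $\theta\in L^\infty$ with $\inf\theta>0$ is produced by Lemma~\ref{lem-zetatheta}, using the generalized principal eigenvalues $\underline{\lambda_1}=\overline{\lambda_1}$ of the operator $\L_{(1+\e)\sigma_\gamma}-(1+\e)\gamma$, a fact that rests on Lions--Souganidis homogenization \cite{LionsSouganidisap}. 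The input to that lemma is $\zeta=\phi_{\gamma+\kappa}/\phi_\gamma^{1+\e}$ with $\kappa$ tuned so that $\mu(\gamma+\kappa)=(1+\e)\mu(\gamma)$; the solvability of this calibration is the genuine use of $w>w^*$ (Lemma~\ref{lem:decmu}). Your proposal contains neither the $\theta$-correction nor the Lions--Souganidis step, and without them the sub-solution inequality fails.

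Two further points, secondary but worth noting. First, your argument for $\mu(\gamma)\to 0$ as $\gamma\searrow\lambda_1$ (``$\varphi_1$ bounded below, so $\phi_\gamma$ cannot decay exponentially'') is informal: to make it rigorous one must identify the limit of $\phi_\gamma$ with (a multiple of) $\varphi_1$, which requires uniqueness of the positive solution at level $\lambda_1$ — i.e.~criticality of $\L-\lambda_1$ and of its limit operators (Proposition~\ref{prop:hypimply} and Proposition~\ref{prop:underlinemu} in the paper). Second, deducing the almost periodicity of $U$ from ``uniqueness of the sandwiched limit'' is too optimistic: the entire solution trapped between $\underline u$ and $\overline u$ is not known to be unique, and the paper instead proves a.p.~of the profile via a sliding argument combined with a weak Harnack inequality in the moving variables (Lemma~\ref{lem:I} and Proposition~\ref{prop:apU}). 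Likewise, your non-existence argument by ``matching decay rates'' is heuristic; the paper derives it from a spreading-speed lower bound (Proposition~\ref{prop:nonexistence}, via \cite{BN1}) expressed through the same generalized principal eigenvalues $k_p$, and the identification $\min_p k_p/p=w^*$ is itself nontrivial.
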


Roughly speaking, the profile $U$ and the function $\sigma$ expressing the 
front in the supercritical case $w>w^*$ inherit the almost periodicity of the 
coefficients. Whether or not an analogous property holds true in the 
case $w=w^*$ is left as open question. 
Moreover, in such case, the front being constructed as a critical travelling 
wave, in the 
sense of \cite{NadinCTW}, we conjecture that it is attractive for 
Heaviside type initial data (see the discussion about this topic in 
\cite{NadinCTW}).  

We derive the non-existence result as a consequence of an estimate of the 
propagation of the interface $X(t)$ of generalized transition fronts which is
valid even without assuming the existence of an
average speed (see Proposition \ref{prop:nonexistence} below). 
\\


Hypothesis \ref{hyp} allows us to prove the following key property: 
\begin{equation}\label{mu=0}
\lim_{\gamma\searrow \lambda_1}\mu(\gamma)=0.
\end{equation}
This limit is always well-defined since $\gamma\mapsto \mu (\gamma)$ will be proved to be nondecreasing and nonnegative (see Lemma \ref{lem-convexitymu}).  If Hypothesis \ref{hyp} fails, then exponentially localized eigenfunctions may arise \cite{SoretsSpencer}, meaning that this limit might be positive. 

If we drop Hypothesis \ref{hyp} we are still able to obtain a partial existence
result.

\begin{thm}\label{thm-apgen} 
Let $\mu (\gamma)$ be as in Proposition \ref{prop-phigamma} and set 
$$
w^*:=\inf_{\gamma>\lambda_1} \displaystyle\frac{\gamma}{\mu 
(\gamma)},\qquad
\underline{w}:=\frac{\lambda_1}{\underline{\mu}},
\quad\text{where}\quad
\underline{\mu}:= \lim_{\gamma\searrow \lambda_1} \mu (\gamma).$$
The following properties hold:
\begin{enumerate}[(i)]
 \item If $w^*<\underline{w}$ then for all $w\in [w^*,\underline{w})$, there exists a time-increasing generalized 
transition front with average speed $w$; 
for $w>w^*$, the front can be written as $u(t,x) = U 
(\int_0^x\sigma -t,x)$, where 
$\sigma\in\mathcal{C} (\R)$ is a.p.~and has average $1/w$ and 
$U=U(z,x)$
is a.p.~in $x$ uniformly in $z\in\R$.

 \item There are no generalized transition fronts with average speed $w<w^*$.
\end{enumerate}
\end{thm}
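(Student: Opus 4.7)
The plan is to mimic the argument used for Theorem~\ref{thm:gtf}, observing that Hypothesis~\ref{hyp} entered that proof only through property~\eqref{mu=0}, i.e.~only to guarantee $\underline{\mu}=0$ and hence $\underline{w}=+\infty$. Dropping the hypothesis, the sub- and super-solution construction remains valid for every $\gamma>\lambda_1$, but it can only parametrize the speeds $w=\gamma/\mu(\gamma)$ that lie below $\underline{w}$, namely those attained along the branch of $\gamma\mapsto\gamma/\mu(\gamma)$ that decreases from $\underline{w}$ down to $w^*$ as $\gamma$ moves away from $\lambda_1$. The non-existence statement (ii) is independent of Hypothesis~\ref{hyp} and follows verbatim from Proposition~\ref{prop:nonexistence}, exactly as in Theorem~\ref{thm:gtf}(ii).

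\emph{Construction for $w\in(w^*,\underline{w})$.} Since $\gamma\mapsto\mu(\gamma)$ is nondecreasing and continuous on $(\lambda_1,+\infty)$ (Lemma~\ref{lem-convexitymu}), the map $\gamma\mapsto\gamma/\mu(\gamma)$ is continuous with limit $\underline{w}$ at $\lambda_1$ and infimum $w^*$; hence for every such $w$ one can select a unique $\gamma=\gamma(w)>\lambda_1$ lying on the decreasing branch with $\gamma/\mu(\gamma)=w$. For this $\gamma$, the uniqueness assertion of Proposition~\ref{prop-phigamma} combined with the almost periodicity of $a$ and $c$ forces the factorization $\phi_\gamma(x)=e^{-\mu(\gamma)x}\psi_\gamma(x)$ with $\psi_\gamma$ a.p.~and bounded away from $0$: along any sequence $(x_n)$ such that $a(\cdot+x_n),c(\cdot+x_n)$ converge uniformly, the translated solution $\phi_\gamma(\cdot+x_n)$ converges, after a Harnack-type renormalization, to a positive solution of the limit equation that decays at $+\infty$, which by uniqueness must coincide with a multiple of $\phi_\gamma$ in the limit. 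Setting $\sigma(x):=-\phi_\gamma'(x)/(\gamma\,\phi_\gamma(x))$ then yields an a.p.~function with mean $\mu(\gamma)/\gamma=1/w$, and the pair
$$\underline{u}(t,x)=\max\{0,\phi_\gamma(x)e^{-\gamma t}-K\phi_{\gamma'}(x)e^{-\gamma' t}\},\qquad\overline{u}(t,x)=\min\{1,M\phi_\gamma(x)e^{-\gamma t}\},$$
with $\gamma'>\gamma$ chosen to absorb the quadratic loss in the KPP nonlinearity and $K,M$ tuned accordingly, provides, via a monotone iteration squeezed between $\underline{u}$ and $\overline{u}$, a time-increasing generalized transition front of the claimed form $u(t,x)=U(\int_0^x\sigma-t,x)$ with $U$ a.p.~in $x$ uniformly in $z\in\R$.

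\emph{Threshold case $w=w^*$.} Pick $w_n\searrow w^*$ inside $(w^*,\underline{w})$, non-empty by hypothesis, and let $u_n$ be the fronts just built, normalized by $u_n(0,0)=1/2$. Interior parabolic regularity together with monotonicity of $u_n$ in $t$ extracts a locally uniform limit $u_\infty$ along a subsequence, which remains time-increasing and satisfies \eqref{limits}; the quantitative upper bound on the interface displacement supplied by Proposition~\ref{prop:nonexistence}, combined with the lower bound inherited from the sub-solutions at level $w_n$, pins the average speed of $u_\infty$ down to $w^*$.

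\emph{Main obstacle.} The crux is the almost-periodic factorization $\phi_\gamma=e^{-\mu(\gamma)x}\psi_\gamma$ for $\gamma>\lambda_1$, since this is what gives meaning to the profile-and-phase decomposition of the constructed front and makes $\sigma$ almost periodic. Unlike in Theorem~\ref{thm:gtf}, where an a.p.~structure at level $\lambda_1$ is provided ready-made by Hypothesis~\ref{hyp} and can then be perturbed, here the factorization must be extracted directly from the uniqueness statement of Proposition~\ref{prop-phigamma}(i); controlling the renormalization of translates of $\phi_\gamma$ through Harnack is the delicate technical point that dictates the whole strategy and is what pins the admissible interval of speeds precisely to $[w^*,\underline{w})$.
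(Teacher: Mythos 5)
Your proposal does not follow the paper's proof, and several of the steps you propose would not work as stated.

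\textbf{The alleged a.p.~factorization of $\phi_\gamma$ is false in general.} You claim $\phi_\gamma(x)=e^{-\mu(\gamma)x}\psi_\gamma(x)$ with $\psi_\gamma$ almost periodic and bounded away from $0$. This would make $\phi_\gamma e^{\mu(\gamma)x}$ bounded, which is not implied by anything in the paper and is generically false. The paper's Lemma~\ref{lem-ap} only gives almost periodicity of $\phi_\gamma'/\phi_\gamma$, while Lemma~\ref{lem-phiunbounded} shows $\phi_\gamma$ itself is unbounded; more to the point, the primitive of an a.p.~function with zero mean (here $\phi_\gamma'/\phi_\gamma+\mu(\gamma)$) need not be bounded, so $\psi_\gamma$ need not be bounded nor a.p. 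The paper carefully avoids needing such a factorization: after the change of variables with $\sigma=\sigma_\gamma/\gamma$, the factor $\phi_\gamma(x)e^{\gamma t}$ becomes exactly $e^{-\gamma z}$, so only the almost periodicity of $\sigma_\gamma$ is used, and the almost periodicity of the profile $U$ is established separately by a sliding method (Proposition~\ref{prop:apU}), not read off from a factorization of $\phi_\gamma$.

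\textbf{The proposed subsolution is not the right object, and it misses the central technical ingredient.} You take $\underline u=\max\{0,\phi_\gamma e^{-\gamma t}-K\phi_{\gamma'}e^{-\gamma't}\}$. In the moving-frame variable $z=\int_0^x\sigma-t$ the first term becomes $e^{-\gamma z}$, but the correction becomes $\phi_{\gamma'}(x)\phi_\gamma(x)^{-\gamma'/\gamma}e^{-\gamma'z}$, and the $x$-dependent prefactor has exponential rate $\mu(\gamma')-(\gamma'/\gamma)\mu(\gamma)$, which is nonzero unless $\gamma'/\mu(\gamma')=\gamma/\mu(\gamma)=w$; so the correction is not bounded relative to $e^{-\gamma'z}$ and the comparison breaks down. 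The paper's resolution is precisely Lemma~\ref{lem-zetatheta} (built on the Lions--Souganidis homogenization result): it takes the ratio $\zeta=\phi_{\gamma+\kappa}/\phi_\gamma^{1+\e}$, with $\kappa$ chosen via Lemma~\ref{lem:decmu} so that $\mu(\gamma+\kappa)=(1+\e)\mu(\gamma)$ (whence $\zeta$ has zero logarithmic rate), and converts it into a genuinely bounded $\theta$ with $\inf\theta>0$ and $-\L_{(1+\e)\sigma_\gamma}\theta\geq(\delta-(1+\e)\gamma)\theta$. The subsolution is then $\phi_\gamma e^{\gamma t}-A\,\theta\,\phi_\gamma^{1+\e}e^{(1+\e)\gamma t}$, which in the moving frame is $e^{-\gamma z}-A\theta(x)e^{-(1+\e)\gamma z}$. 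This passage — and with it the choice of $\gamma$ in Lemma~\ref{lem:decmu}, which needs not only $w=\gamma/\mu(\gamma)$ but also $w>\gamma'/\mu(\gamma')$ for $\gamma'$ slightly above $\gamma$ to guarantee the existence of a valid $\e$ and $\kappa$ — is the heart of the proof and is entirely absent from your proposal.

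\textbf{The critical case is not handled by a limiting argument in the paper, and your version is not justified.} You pass to a limit of supercritical fronts $u_n$ with speeds $w_n\searrow w^*$, but you give no reason the limit is a generalized transition front (the uniform-in-time limits in~\eqref{limits} do not pass to local limits without a uniform interface-width estimate) nor that it has average speed exactly $w^*$ (the average speed of a locally uniform limit is not controlled in general). The paper instead invokes the critical travelling wave of \cite{NadinCTW}, shows via a comparison of interfaces that $\um{X'}\le w^*$, and then uses the least-mean/upper-mean machinery of \cite{NR1,NR2} together with a limit-operator version of Proposition~\ref{prop:nonexistence} to prove $\lm{X'}\ge w^*$; the two bounds force the average speed to equal $w^*$.

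Part (ii) is correctly reduced to Proposition~\ref{prop:nonexistence}.
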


Since Hypothesis \ref{hyp} entails \eqref{mu=0}, which in turn yields 
$\underline w=+\infty$, Theorem 
\ref{thm:gtf} will follow from Theorem \ref{thm-apgen}, after showing that $w^*$ 
is attained in that case, see Remark \ref{rmk:hyp}.
We will also show that \eqref{mu=0} holds when $c$ is constant using an 
alternative direct argument, see Remark \ref{rmk:c}.

The above results leave open several interesting problems:
\begin{itemize}
\item Do they extend to multi-dimensional equations? The construction of the 
$\phi_{\gamma}$'s in Proposition \ref{prop-phigamma} strongly relies on 1D 
arguments, but are there analogous solutions in higher dimensions? 

\item Is it possible to construct a rigorous example where $\underline{\mu}>0$ and, in this case, are there generalized transition fronts with average speed $w>\underline{w}$? 

\item What are the properties of the critical front with speed $w=w^{*}$? Does it have a.p. profile? Is it attractive, in a sense, for the Cauchy problem? 
\end{itemize}

We conclude with a small lemma ensuring that the 
sufficient condition for the existence result in Theorem \ref{thm-apgen}(i) is 
fulfilled up to a constant perturbation of the zero order term.

\begin{lem} \label{lem:nonempty}
For $c_0$ large enough 
the quantities $\underline{w}$ and 
$w^{*}$ provided by Theorem \ref{thm-apgen} for the equation \eqref{eqprinc} 
with $c(x)$ replaced by $c(x)+c_{0}$ satisfy $w^*<\underline{w}$. 
\end{lem}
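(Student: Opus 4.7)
The plan is to exploit the fact that replacing $c(x)$ by $c(x)+c_0$ merely shifts the operator $\L$ by the constant $c_0$: writing $\L^{c_0}:=\L+c_0$ for the perturbed operator, a function $\phi$ solves $\L\phi=\gamma\phi$ if and only if it solves $\L^{c_0}\phi=(\gamma+c_0)\phi$. This immediately yields the scalar-shift identities
\[
\lambda_1^{c_0}=\lambda_1+c_0,\qquad \phi_\gamma^{c_0}=\phi_{\gamma-c_0},\qquad \mu^{c_0}(\gamma)=\mu(\gamma-c_0)\quad\text{for }\gamma>\lambda_1+c_0,
\]
where the superscript $c_0$ denotes the quantities associated with the perturbed equation. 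Consequently $\underline\mu^{c_0}=\underline\mu$, and
\[
\underline w^{c_0}=\frac{\lambda_1+c_0}{\underline\mu},\qquad w^*_{c_0}=\inf_{\eta>\lambda_1}\frac{\eta+c_0}{\mu(\eta)}.
\]

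If $\underline\mu=0$ then $\underline w^{c_0}=+\infty$, while $w^*_{c_0}$ is finite because $\mu(\eta)>0$ for every $\eta>\lambda_1$ by Proposition~\ref{prop-phigamma}; the conclusion is then automatic. Assuming henceforth $\underline\mu>0$, the next step is to exhibit some $\eta_0>\lambda_1$ with $\mu(\eta_0)>\underline\mu$. Since $\mu$ is nondecreasing, it suffices to show that $\mu(\gamma)\to+\infty$ as $\gamma\to+\infty$. This is obtained by a comparison argument: bounding $a,c$ from above and below by constants reduces the decay estimate for $\phi_\gamma$ to a constant-coefficient problem, whose characteristic exponent is $\sqrt{(\gamma-c_{\pm})/a_{\pm}}$, so that $\mu(\gamma)\gtrsim\sqrt{(\gamma-\sup c)/\sup a}\to\infty$.

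With such an $\eta_0$ fixed, the estimate $w^*_{c_0}\leq(\eta_0+c_0)/\mu(\eta_0)$ together with the explicit formula for $\underline w^{c_0}$ reduces $w^*_{c_0}<\underline w^{c_0}$ to the algebraic inequality
\[
c_0\bigl(\mu(\eta_0)-\underline\mu\bigr)>\underline\mu\,\eta_0-\mu(\eta_0)\,\lambda_1,
\]
which holds for all $c_0$ large enough since the coefficient of $c_0$ on the left is positive. The main obstacle is the unboundedness of $\mu$: once this is established, the rest is the bookkeeping afforded by the scalar-shift structure. Heuristically this reflects the fact that $w^*_{c_0}=O(\sqrt{c_0})$, while $\underline w^{c_0}$ grows linearly in $c_0$.
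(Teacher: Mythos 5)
Your proposal is correct and follows essentially the same route as the paper: exploit the scalar-shift identities $\lambda_1\mapsto\lambda_1+c_0$, $\mu_{c_0}(\gamma+c_0)=\mu(\gamma)$, then reduce the claim to the linear-in-$c_0$ inequality $c_0(\mu(\eta_0)-\underline\mu)>\underline\mu\,\eta_0-\mu(\eta_0)\lambda_1$ for a fixed $\eta_0>\lambda_1$. The only difference is cosmetic: you re-derive $\mu(\eta_0)>\underline\mu$ via unboundedness of $\mu$ and a separate case $\underline\mu=0$, whereas the paper simply invokes the strict inequality $\mu(\gamma)>\underline\mu$ already established in Lemma~\ref{lem-convexitymu}, which makes the argument uniform and shorter.
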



\subsection{Optimality of Hypothesis \ref{hyp}}
\label{sec:hyp}

In this section we discuss Hypothesis \ref{hyp} and a generalization. This
involves the notion of {\em limit operator} associated with $\L$, that is, an
operator of the type
$$\L^*\phi:=(a^*(x)\phi')'+ c^*(x)\phi,$$
where $a^*(x)$ and $c^*(x)$ are the limits as $n\to\infty$ of
$a(x + x_n)$ and $c(x + x_n)$, for some sequence
$(x_n)_{n\in\N}$ in $\R$.
We know from \cite[Lemma 5.6]{BerestyckiHamelRossi} that, being $a$ and $c$ 
a.p., the
generalized
principal eigenvalue of any limit operator $\L^*$ coincides with 
the one of $\L\,$: $\lambda_1$. 

\begin{hyp}\label{hyp-crit}
For any limit operator $\L^*$, the operator $\L^*-\lambda_1$
is {\em critical}, that is, the space of 
positive solutions to 
\begin{equation}\label{pep}
\L^{*}\vp=\lambda_{1}\vp\quad\text{in }\R 
\end{equation} 
has dimension 1. 
\end{hyp}
Actually, the definition of criticality is related to the
non-existence of a Green function (see \cite{Pinch88}) but in dimension 1 it is 
known from \cite{A1}, \cite[Appendix~1]{Murata}
that it is equivalent to the validity of the positive Liouville property stated
in Hypothesis~\ref{hyp-crit} (see also \cite[Theorem 4.3.4 and Proposition 
5.1.3]{Pinsky}). 
We will make use of the theory of critical operators
only once, in order to show that \eqref{mu=0} holds under Hypothesis 
\ref{hyp-crit}, see
Proposition \ref{prop:underlinemu}. It follows that \eqref{mu=0} holds under 
Hypothesis \ref{hyp}, which is more restrictive than
Hypothesis \ref{hyp-crit} owing to the following.

\begin{prop} \label{prop:hypimply}
Let $\L$ be a self-adjoint operator on $\R$ which admits a positive bounded 
eigenfunction $\vp$. Then, the associated eigenvalue is the generalized 
principal eigenvalue $\lambda_1$ given by 
\eqref{l1}, and there holds:
\begin{enumerate}
\item $\L-\lambda_1$ is critical;
 
\item if in addition $\inf\vp>0$ 
then $\L^*-\lambda_1$
is critical for any limit operator $\L^*$; 

\item 
$\inf\vp>0$ if and only if $\vp$ is a.p.
\end{enumerate}
\end{prop}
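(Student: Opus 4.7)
\medskip

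\noindent\textbf{Proof plan.}
First, that the eigenvalue associated to $\vp$ is $\lambda_1$: the inequality $\ge\lambda_1$ is immediate from \eqref{l1} applied to $\vp$, while the reverse inequality follows from the Bloch-type principle \cite[Theorem 1.7]{BerRossipreprint} (or, alternatively, from an Allegretto--Piepenbrink-type argument for self-adjoint $\L$: a bounded positive eigenfunction at level $\lambda$ is a subsolution for any $\mu\geq\lambda$, so combined with the characterization of $\lambda_1$ as the bottom of the $L^2$-spectrum of $-\L$ recalled after \eqref{l1}, we obtain $\lambda\leq\lambda_1$).

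For (1), I would apply a ground state transform. Writing any positive $C^2$ solution $u$ of $\L u=\lambda_1 u$ as $u=\vp\psi$ and using $\L\vp=\lambda_1\vp$, a direct computation yields $(a\vp^2\psi')'=0$, so $\psi'(x)=K/(a(x)\vp(x)^2)$ for some $K\in\R$. Since $a$ (being a.p.) and $\vp$ are bounded above, the integrand is bounded below by a positive constant, whence $\int_0^{\pm\infty}dy/(a\vp^2)=\pm\infty$. For $\psi$ to stay positive on all of $\R$ we must take $K=0$, so $\psi$ is constant and $u\in\R_+\vp$. This is criticality of $\L-\lambda_1$.

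For (2), fix a limit operator $\L^*$ arising from shifts $x_n$; by a.p., $a(\cdot+x_n), a'(\cdot+x_n), c(\cdot+x_n)$ converge uniformly to the coefficients of $\L^*$. The translates $\vp(\cdot+x_n)$ are uniformly bounded positive solutions of the shifted equations, so standard interior $C^{2,\alpha}_{\mathrm{loc}}$ estimates yield, along a subsequence, a locally $C^2$ limit $\vp^*$ with $\L^*\vp^*=\lambda_1\vp^*$. The hypothesis $\inf\vp>0$ is used here: it forces $\inf\vp^*\geq\inf\vp>0$, so $\vp^*$ is a positive bounded eigenfunction of the self-adjoint a.p.~operator $\L^*$. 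Applying (1) to $\L^*$ (whose coefficients share the required structure) gives criticality of $\L^*-\lambda_1$.

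For (3), the direction ($\Leftarrow$) is a quick application of the strong maximum principle: if $\vp$ is a.p.~with $\inf\vp=0$, choose $x_n$ with $\vp(x_n)\to 0$ and extract so that $\vp(\cdot+x_n)\to\vp^*$ \emph{uniformly}; the limit $\vp^*\ge0$ solves the limit eigenvalue equation with $\vp^*(0)=0$ and $\sup\vp^*=\sup\vp>0$, a contradiction. The direction ($\Rightarrow$) is the main obstacle. Given $x_n$, extract so that the coefficients converge uniformly and $\vp(\cdot+x_n)\to\vp^*$ locally in $C^2$, with $\inf\vp^*>0$; by (2), $\vp^*$ is uniquely determined by $\vp^*(0)=\lim\vp(x_n)$, so the whole subsequence converges locally uniformly. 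To upgrade to uniform convergence, I would argue by contradiction: if $|y_k|\to\infty$ and $|\vp(y_k+x_{n_k})-\vp^*(y_k)|\ge\epsilon$, a further diagonal extraction produces a common secondary limit operator $\L^{**}$ (uniform convergence $a(\cdot+x_n)\to a^*$ passes through translations, so $a(\cdot+x_n+y_n)$ and $a^*(\cdot+y_n)$ share the same uniform limits) together with two positive bounded eigenfunctions of $\L^{**}$, namely the limits of $\vp(\cdot+x_{n_k}+y_k)$ and $\vp^*(\cdot+y_k)$; part (2) applied at $\L^{**}$ forces them to be proportional, and the proportionality constant is pinned down by the invariance of $\inf\vp$ and $\sup\vp$ under translations, possibly after iterating the shift $y_k$ so as to amplify any discrepancy beyond these envelopes. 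This last bookkeeping -- turning criticality at $\L^{**}$ into an actual identification of the normalizations -- is where I expect the main technical work, and it is the delicate point of the argument.
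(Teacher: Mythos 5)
Your identification of the eigenvalue (via \cite[Theorem 1.7]{BerRossipreprint}), part (1), part (2), and the direction ``a.p.\ $\Rightarrow$ positive infimum'' of part (3) are all correct and essentially match the paper's argument. Part (1) is worth a small note: your ground-state transform $u=\vp\psi$, giving $(a\vp^2\psi')'=0$ and hence $\psi'=K/(a\vp^2)$ with $K=0$ forced by boundedness, is exactly equivalent to the paper's Wronskian computation $(\psi/\vp)'=a(0)(\psi'-\vp')(0)/(a\vp^2)$; both use boundedness of $a$ and $\vp$ to force monotonicity of the ratio and hence a sign change. No substantive difference.

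The gap is the direction ``positive infimum $\Rightarrow$ a.p.'' of part (3), which you flag yourself as ``the delicate point.'' Your framework is right: set up Bochner's iterated-limit characterization, invoke criticality through part (2) to conclude that the diagonal limit $\vp^*=\lim_n\vp(\cdot+x_n+y_n)$ and the iterated limit $\tilde\vp^*=\lim_m\lim_n\vp(\cdot+x_n+y_m)$ are proportional, $\tilde\vp^*=\beta\vp^*$, and then show $\beta=1$. But your proposed mechanism for showing $\beta=1$---``iterating the shift $y_k$ so as to amplify any discrepancy''---does not go through. The constant $\beta$ compares a diagonal limit to an iterated limit of the \emph{same} translations, so forward iteration does not compound it multiplicatively, and it is not a priori clear how to produce a chain $\vp^{(m+1)}=\beta\vp^{(m)}$ of translates of $\vp$ to squeeze $\beta^m$ against $\inf\vp$ and $\sup\vp$. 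What is actually needed is a \emph{back-translation}: apply the same proportionality relation to the pair of sequences $(x_n+y_n)_n$ and $(-x_n-y_n)_n$, for which the forward diagonal limit recovers $\vp$ itself. This produces some $\beta^*>0$ with $\lim_m\vp^*(\cdot-x_m-y_m)=\beta^*\vp$; since $\beta^*\vp$ is then a limit of translates of $\vp^*$, and $\vp^*$ is itself a limit of translates of $\vp$, the elementary monotonicity of $\inf$ and $\sup$ under such limits gives
\[
\beta^*\inf\vp\geq\inf\vp^*\geq\inf\vp,\qquad \beta^*\sup\vp\leq\sup\vp^*\leq\sup\vp,
\]
forcing $\beta^*=1$ and, crucially, $\inf\vp^*=\inf\vp$ and $\sup\vp^*=\sup\vp$. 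Applying the same device to the other limit functions in the Bochner scheme (first with $(x_n)$, $(-x_n)$ to normalize $\tilde\vp=\lim_n\vp(\cdot+x_n)$, then with $(y_n)$, $(-y_n)$ to normalize $\tilde\vp^*$) eventually yields $\inf\tilde\vp^*=\inf\vp^*$ and $\sup\tilde\vp^*=\sup\vp^*$, whence $\beta=1$. This back-translation step, not forward iteration, is what closes the argument, and it is missing from your proposal.
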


The above property $1$ could be obtained as a consequence of one of the 
following stronger results: \cite[Theorem A.9]{Murata} or \cite[Theorem 
1.7]{Pinch07}.
We present below a different, simple direct proof. The almost periodicity of 
the 
coefficients is only needed in $3$ (boundedness and uniform continuity would be 
enough to guarantee the existence of limit operators so that $2$ makes 
sense).  
We point out that in \cite{Murata} the hypothesis~$\vp$ bounded is 
replaced by an integrability condition which holds if $\vp$ grows at most as 
$\sqrt{|x|}$. 
However, unlike boundedness, such more general condition cannot be 
exploited to get informations about limit operators.

\begin{proof}
Firstly, we know from \cite[Theorem 1.7]{BerRossipreprint} that the existence 
of the positive bounded eigenfunction $\vp$ implies that the associated 
eigenvalue is necessarily $\lambda_1$. 

1. Let $\psi$ be a nontrivial solution to $\L\psi = \lambda_1 \psi$. We can 
assume without loss
of generality that $\psi(0)\neq0$. Let us normalize $\vp,\psi$ 
by $\vp(0)=\psi(0)=1$.
Using equation \eqref{pep} for both $\vp$ and $\psi$ we get, for all $x\in\R$,
$$\int_0^x (a\vp')'\psi=\int_0^x(a\psi')'\vp,$$
from which we deduce
$$a(x)(\vp'\psi-\psi'\vp)(x)=a(0)(\vp'-\psi')(0),$$
or equivalently
$$(\psi/\vp)'(x)=\frac{a(0)(\psi'-\vp')(0)}{a(x)\vp^2(x)}.$$
It follows that  if $(\psi'-\vp')(0)$ has a sign then the function
$(\psi/\vp)'$ has everywhere the same sign and it is bounded
away from $0$ since $\vp$ is bounded. This implies that  $\psi/\vp$ changes sign on $\R$, that is,
$\psi$ changes sign. As a consequence, $\vp$ is the unique positive
eigenfunction of $\L$ with eigenvalue $\lambda_1$, up to a scalar multiple, 
meaning that $\L-\lambda_1$ is critical. 

2. Suppose that $\inf\vp>0$. Let $\L^*$ be a limit operator defined through a 
sequence of translations $(x_n)_{n\in\N}$. The functions $\vp(\cdot+x_n)$
converge locally 
uniformly (up to subsequences) to an eigenfunction of $\L^*$ with 
eigenvalue $\lambda_1$. Moreover, because $\vp$ is bounded and has 
positive infimum, the same is true for such eigenfunction. Statement 1 of the
proposition eventually implies that 
$\L^*-\lambda_1$ is critical.

3. An a.p.~positive eigenfunction $\vp$ is necessarily bounded and with 
positive infimum. Indeed, boundedness immediately follows from almost 
periodicity, while, supposing that $\inf\vp=0$, one would 
readily obtain the contradiction $\vp\equiv0$ by applying the strong maximum 
principle to a limit operator associated with a minimizing sequence for 
$\vp$ 
and then using the almost periodicity of $\vp$. Conversely, let $\vp$ be a 
bounded eigenfunction with positive infimum. We use the following 
characterization of a.p.~functions due to Bochner~\cite{Boc}:
a~function $g:\R\to\R$ is a.p.~if and only if from any pair of sequences
$(x_n)_{n\in\N}$, $(y_n)_{n\in\N}$ one can extract a common subsequence
$(x'_n)_{n\in\N}$, $(y'_n)_{n\in\N}$ such that
$$\forall 
x\in\R,\quad\lim_{n\to\infty}g(x+x'_n+y'_n)=\lim_{m\to\infty}\left(\lim_{
n\to\infty } g(x+x'_n+y'_m)\right).$$
Consider two sequences $(x_n)_{n\in\N}$, $(y_n)_{n\in\N}$ in $\R$.
By elliptic estimates, the limits 
$$\tilde\vp(x):=\lim_{n\to\infty}\vp(x+x_n),\quad
{\tilde\vp}^*(x):=\lim_{n\to\infty}\tilde\vp(x+y_n),\quad
\vp^*(x):=\lim_{n\to\infty}\vp(x+x_n+y_n),$$
exist (up to subsequences) locally uniformly in 
$x\in\R$, and, applying Bochner's characterization to the coefficients $a$, $c$,
we deduce that both $\tilde\vp^*$ and $\vp^*$ are eigenfunctions 
with eigenvalue~$\lambda_1$ of the limit operator $\L^*$ associated with (a subsequence of) the 
sequence of translations $(x_n+y_n)_{n\in\N}$.
The operator $\L^*-\lambda_1$ is critical by statement 2, which means that the positive functions $\tilde\vp^*$ and $\vp^*$ coincide up to a scalar multiple $\beta>0$. Namely
\begin{equation}\label{multiple}
\forall x\in\R,\quad
\lim_{m\to\infty}\left(\lim_{n\to\infty}\vp(x+x_n+y_m)\right)=
\beta\lim_{n\to\infty}\vp(x+x_n+y_n).
\end{equation}
If we show that $\beta=1$, we would infer from 
Bochner's characterization that $\vp$ is a.p. To do this, we first 
deduce from the fact that $\vp^*$, $\tilde\vp$ are obtained as limits of  translations of $\vp$ and that $\tilde\vp^*$ is a limit of translations of $\tilde\vp$ that
$$\inf\vp^*\geq\inf\vp,\qquad \sup\vp^*\leq\sup\vp,\qquad
\inf\tilde\vp^*\geq\inf\tilde\vp\geq\inf\vp,\qquad \sup\tilde\vp^*\leq\sup\tilde\vp\leq\sup\vp.$$
Now, we apply property \eqref{multiple} with $(x_n)_{n\in\N}$ replaced by $(x_n+y_n)_{n\in\N}$ and with $(y_n)_{n\in\N}$ replaced by $(-x_n-y_n)_{n\in\N}$. We infer the existence of some $\beta^*>0$ such that (up to subsequences)
$$\forall x\in\R,\quad
\lim_{m\to\infty}\vp^*(x-x_m-y_m)=\beta^*\vp(x).$$
This means that $\beta^*\vp$ is a limit of translations of $\vp^*$ and
therefore 
$$\beta^*\inf\vp\geq\inf\vp^*\geq\inf\vp,\qquad \beta^*\sup\vp\leq\sup\vp^*
\leq\sup\vp.$$
It follows that $\beta^*=1$ and $\inf\vp^*=\inf\vp$, $\sup\vp^*=\sup\vp$.
With analogous arguments, considering the pair of sequences $(x_n)_{n\in\N}$ and
$(-x_n)_{n\in\N}$ we derive $\inf\tilde\vp=\inf\vp$, $\sup\tilde\vp=\sup\vp$.
Finally, starting from the function $\tilde\vp$ 
(which satisfies the same type of eigenvalue problem with a.p.~coefficients as
$\vp$) and considering the sequences $(y_n)_{n\in\N}$ and $(-y_n)_{n\in\N}$, we
find that 
$\inf\tilde\vp^*=\inf\tilde\vp$, $\sup\tilde\vp^*=\sup\tilde\vp$.
Summing up, we have that $\inf\tilde\vp^*=\inf\vp^*$,
$\sup\tilde\vp^*=\sup\vp^*$, whence $\beta=1$.
\end{proof}

\begin{prop}[\cite{Kozlov}]\label{prop:Kozlov}
Let $M\geq 2$ and $\omega=(\omega_1,\dots,\omega_M) \in \R^{M}$ be such that 
\begin{equation} \label{dioph}\forall n\in \mathbb{Z}^{M}\backslash \{0\}, \quad
|n\cdot\omega |
\geq k |n|^{-\sigma} \quad \hbox{ for some }\; k, \sigma >0.\end{equation}
Assume that $a$ and $c$ are quasiperiodic, in the sense that there exist
two functions $\hat{a},\hat{c}\in\mathcal{C}(\R^{M},\R)$ with periods
$\Z^M$ which are $1-$periodic in all directions, and $(\omega_{1} ,..., \omega_{M})\in (0,\infty)^{M}$
 such that $a(x) = \hat{a}(\omega_{1} x,..., \omega_{M}x)$ and 
$c(x) = \hat{c}(\omega_{1} x,..., \omega_{M}x)$ for all $x\in \R$.
Then there exist $r=r(\sigma)$ and $\epsilon = \epsilon (\sigma)$ such that if
$\hat{a}, \hat{c}\in \mathcal{C}^{r}(\R^{M},\R)$ and
$\|\hat{c}\|_{\mathcal{C}^{r}}<\epsilon$, then 
Hypothesis \ref{hyp} is satisfied. 
\end{prop}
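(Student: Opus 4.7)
The statement is attributed to \cite{Kozlov}, and the most honest proof is simply to cite that reference. If I had to reconstruct the argument, the plan is to transfer the eigenvalue equation to the torus $\R^M/\Z^M$ and solve it via a KAM-type iteration in which the small divisors are controlled by the Diophantine condition \eqref{dioph}.

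Writing $a(x)=\hat{a}(\omega_1 x,\dots,\omega_M x)$ and $c(x)=\hat{c}(\omega_1 x,\dots,\omega_M x)$, I would look for a positive eigenfunction in the exponential form $\phi_1(x)=\exp\bigl(\hat\psi(\omega_1 x,\dots,\omega_M x)\bigr)$ for some $\hat\psi\in\mathcal{C}^2(\R^M/\Z^M)$; any such $\phi_1$ is automatically bounded with positive infimum, hence a.p. Denoting by $D_\omega:=\sum_{i=1}^M\omega_i\,\partial_{\theta_i}$ the directional derivative on the torus, the equation $\L\phi_1=\lambda\phi_1$ becomes the Riccati-type PDE
\[
\hat a\bigl(D_\omega^2\hat\psi+(D_\omega\hat\psi)^2\bigr)+(D_\omega\hat a)(D_\omega\hat\psi)+\hat c=\lambda\qquad\text{on }\R^M/\Z^M.
\]
For $\hat c\equiv 0$, the pair $(\hat\psi,\lambda)=(0,0)$ is an exact solution, with $\phi_1\equiv 1$. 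For small $\hat c$, I would run a Newton iteration: given an approximate solution $(\hat\psi_n,\lambda_n)$ with residual $R_n$, the correction $(\delta\hat\psi,\delta\lambda)$ satisfies a linearized equation of the form $\hat a\,D_\omega^2\delta\hat\psi+(\text{drift})\,D_\omega\delta\hat\psi-\delta\lambda=R_n$, and $\delta\lambda$ is fixed by requiring that the right-hand side has zero torus-mean. The resulting equation is then inverted mode by mode in Fourier series on $\R^M/\Z^M$, each nonzero mode $m\in\Z^M$ producing a small divisor $\omega\cdot m$ bounded below by $k|m|^{-\sigma}$ thanks to \eqref{dioph}.

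The main obstacle is the standard small-divisor difficulty: each Newton step loses a power of the order of $\sigma$ in Sobolev or H\"older regularity but gains a quadratic factor in the error. Choosing $r=r(\sigma)$ large and $\|\hat c\|_{\mathcal{C}^r}<\epsilon(\sigma)$ small enough, the quadratic convergence of the Newton scheme beats the polynomial loss of smoothness in the usual KAM fashion and produces in the limit a function $\hat\psi\in\mathcal{C}^2(\R^M/\Z^M)$ together with an eigenvalue $\lambda$. The corresponding $\phi_1$ is then a bounded, strictly positive, a.p.~eigenfunction of the self-adjoint divergence-form operator $\L$, and Proposition \ref{prop:hypimply} ensures that the associated eigenvalue is necessarily $\lambda_1$. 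Hypothesis \ref{hyp} therefore holds.
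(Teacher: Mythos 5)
The paper offers no proof of this proposition---it attributes the result entirely to Kozlov and states it as a citation---and you correctly recognize this. Your supplementary KAM sketch (writing the candidate eigenfunction as $\exp(\hat\psi(\omega x))$, reducing to a Riccati-type PDE on the torus, solving by Newton iteration with small-divisor control via the Diophantine condition, and then invoking Proposition \ref{prop:hypimply} to identify the eigenvalue with $\lambda_1$) is a faithful and plausible reconstruction of the argument behind Kozlov's theorem, so there is nothing in the paper to compare it against beyond the citation itself.
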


A typical example of a function $c$ satisfying these hypotheses is $c(x) =\epsilon\big( \cos (x)+ \cos (\sqrt{2}x)\big)$, with $\epsilon$ small enough. 

%
%


\section{Properties of the linearized problem}\label{sec:linearized}

We investigate now the properties of the eigenfunctions of the linearized 
operator $\L$ defined by \eqref{def-La}. For any (possibly unbounded) interval 
$I$ we define the generalized principal eigenvalue
\begin{equation}\label{charI}
\lambda_1 (\L,I)=\inf \{ \lambda\in\R, \ \exists \phi\in\mathcal{C}^2(I),
\ \phi>0 \ \hbox{ in } I, \ \L \phi \leq \lambda \phi\text{ in }I\}.
\end{equation}
If $I$ is bounded it coincides with the classical principal eigenvalue.
In the case $I=\R$ the definition reduces to that of $\lambda_1$ 
in \eqref{l1}; we will sometimes use the notation $\l$ in order to 
avoid ambiguity.
The following characterizations hold without assuming $a$, $c$ to be 
almost periodic, but just bounded (see \cite{A1}, \cite{BerRossipreprint}):
\begin{equation} \label{def-RayleighR}
 \lambda_1 (\L,I)= \sup_{\varphi\in H^1_0 (I),\ 
\varphi\not\equiv0}\frac{\int_I \left( c(x) \varphi^2 -
a(x)(\varphi')^2 \right) dx}{\int_I \varphi^2 dx},
\end{equation}
\begin{equation}\label{charR}
\lambda_1 (\L,\R)=\lim_{R\to+\infty}\lambda_1 (\L, (-R,R)).
\end{equation}
From \eqref{charI} it follows that $\lambda_1 (\L,I)$ is nondecreasing 
with
respect to the inclusion of intervals $I$.
If $\L$ has a.p.~coefficients then $\lambda_1 (\L,\R)$ can also be
characterized through intervals invading only $\R_+$ (or $\R_-$).

\begin{prop}\label{lem-caraclambda1}
There holds
$$\lambda_1 (\L, (0,R))\nearrow\lambda_1 (\L,\R)\geq\inf c \quad 
\text{ as}\quad R\nearrow+\infty.$$
\end{prop}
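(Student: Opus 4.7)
The plan is to separate the two claims. The monotonicity of $R\mapsto\lambda_1(\L,(0,R))$ and the upper bound $\lim_R\lambda_1(\L,(0,R))\leq\lambda_1(\L,\R)$ are immediate from the inclusion monotonicity of $\lambda_1(\L,\cdot)$ recalled just after \eqref{charR}; hence the limit $L:=\lim_R\lambda_1(\L,(0,R))$ exists and satisfies $L\leq\lambda_1(\L,\R)$. The reverse inequality $L\geq\lambda_1(\L,\R)$ is the nontrivial direction.

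To obtain it, I would exploit the almost periodicity of $(a,c)$ to transport a long centered interval $(-R,R)$ deep into $\R_+$ while perturbing the coefficients only slightly. Fix $R>0$ and $\epsilon>0$. Since $(a,c)$ is jointly almost periodic, Bochner's definition provides a relatively dense set of common $\epsilon$-translation numbers, so there exists $\tau=\tau(R,\epsilon)>R$ such that
$$\|a(\cdot+\tau)-a\|_\infty+\|c(\cdot+\tau)-c\|_\infty<\epsilon.$$
Translation invariance gives $\lambda_1(\L,(\tau-R,\tau+R))=\lambda_1(\tilde\L,(-R,R))$, where $\tilde\L$ has coefficients $a(\cdot+\tau),c(\cdot+\tau)$. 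Continuity of the classical Dirichlet principal eigenvalue on the fixed bounded interval $(-R,R)$ under $L^\infty$-perturbations of bounded coefficients with $\inf a>0$, obtained by inserting the normalized principal eigenfunction of one operator into the Rayleigh characterization \eqref{def-RayleighR} of the other, then yields
$$\lambda_1(\L,(\tau-R,\tau+R))\geq\lambda_1(\L,(-R,R))-C(R)\epsilon$$
for some $C(R)>0$ depending on $R,\inf a,\|a\|_\infty,\|c\|_\infty$. Since $(\tau-R,\tau+R)\subset(0,2\tau)$, inclusion monotonicity gives
$$L\geq\lambda_1(\L,(0,2\tau))\geq\lambda_1(\L,(-R,R))-C(R)\epsilon.$$
Sending $\epsilon\to 0$ at fixed $R$ and then $R\to\infty$, together with \eqref{charR}, yields $L\geq\lambda_1(\L,\R)$.

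For the lower bound $\lambda_1(\L,\R)\geq\inf c$, I would plug rescaled bump functions into \eqref{def-RayleighR}. Pick any nontrivial $\eta\in H^1_0(\R)$ with compact support and set $\varphi_R(x):=\eta(x/R)$; a change of variables gives
$$\frac{\int c\,\varphi_R^2}{\int \varphi_R^2}\geq\inf c,\qquad \frac{\int a(\varphi_R')^2}{\int \varphi_R^2}\leq \frac{\|a\|_\infty\,\|\eta'\|_{L^2}^2}{R^2\,\|\eta\|_{L^2}^2},$$
so \eqref{def-RayleighR} yields $\lambda_1(\L,\R)\geq\inf c-O(1/R^2)$, hence $\lambda_1(\L,\R)\geq\inf c$ upon letting $R\to\infty$.

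The delicate point is the perturbation estimate in the translation argument: since the uniformly elliptic diffusion coefficient is also perturbed, the error in the Rayleigh quotient contains a term of the form $\int(\tilde a-a)(\varphi')^2$, which must be controlled by $\epsilon$ times an a priori bound on $\int(\varphi')^2$ for the extremizing $\varphi$. The $H^1$-bound on the normalized Dirichlet eigenfunction coming from $\inf a>0$ supplies this, at the cost of a constant $C(R)$ depending on $R$; since in our argument $R$ is fixed before $\epsilon$ is sent to $0$, this dependence is harmless.
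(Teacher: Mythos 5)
Your proof is correct, and it takes a genuinely different route from the paper's for the nontrivial inequality $\lim_R \lambda_1(\L,(0,R)) \geq \lambda_1(\L,\R)$. The paper proceeds by compactness: it normalizes the Dirichlet eigenfunctions $\varphi_{2R_n}$, recenters them at $R_n$, extracts a locally uniformly convergent subsequence, obtains a positive entire solution of $\L^*\psi_\infty = \lambda\psi_\infty$ for a limit operator $\L^*$, and then invokes the cited result \cite[Lemma 5.6]{BerestyckiHamelRossi} that $\lambda_1(\L^*,\R)=\lambda_1(\L,\R)$. You instead stay at the finite-interval level and make the role of almost periodicity explicit: by the Bohr characterization (equivalent to the Bochner one in Definition~\ref{defalmostper}, and applicable jointly to the pair $(a,c)$ via the standard relative density of common $\epsilon$-translation numbers), arbitrarily large $\tau$ exist that $\epsilon$-translate the coefficients; a Rayleigh-quotient perturbation bound on the fixed interval $(-R,R)$ (using the $H^1$ bound on the normalized principal eigenfunction coming from $\inf a>0$, as you correctly flag) then shows $\lambda_1(\L,(\tau-R,\tau+R))$ is within $C(R)\epsilon$ of $\lambda_1(\L,(-R,R))$, and since $(\tau-R,\tau+R)\subset(0,2\tau)$ one concludes by inclusion monotonicity, \eqref{charR}, and sending $\epsilon\to0$ then $R\to\infty$. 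What this buys: your argument is self-contained and avoids both the diagonal extraction and the external lemma, at the price of a perturbation estimate; the paper's is shorter given the machinery it already deploys for limit operators elsewhere. The lower bound $\lambda_1(\L,\R)\geq\inf c$ is obtained the same way in both proofs --- a slowly-varying test function in \eqref{def-RayleighR} --- the paper choosing $\cos(\pi x/(2R))$ and you a rescaled bump.
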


\begin{proof} 
First, it is well-known that for all $R>0$, there exists a Dirichlet
principal eigenfunction, that is, $\varphi_R \in 
\mathcal{C}^2 ([0,R])$ 
such that $\varphi_R (0)=\varphi_R (R) = 0$, 
$\varphi_R >0$ in $(0,R)$ and $\big( a(x)\varphi_R'\big)'+ c(x) \varphi_R =
\lambda_1 (\L,
(0,R))\varphi_R $ in $(0,R)$. From (\ref{charI}), it follows that $R\mapsto
\lambda_1 (\L, (0,R))$ is nondecreasing and bounded from above by 
$\lambda_1 (\L,\R)$. Hence, one can define 
$$\lambda:= \lim_{R\to +\infty} \lambda_1 (\L, (0,R))\leq \lambda_1 (\L,\R).$$
The Harnack inequality, elliptic regularity estimates and 
a diagonal extraction imply that there exists a sequence
$(R_n)_n$ such that $R_n\to +\infty$ and 
the functions $(\psi_n)_n$ defined by
$\psi_n(x):=\varphi_{2R_n}(x+R_n)/\varphi_{2R_n}(R_n)$
converge to some $\psi_\infty$ in
$\mathcal{C}^1 (K)$ for all compact set $K \subset\R$. 
Since $a$ and $c$ are a.p., we can assume without loss of generality that
there exists $a^*,c^*\in\mathcal{C} (\R)$ such that $a(x+R_n)\to a^*(x)$ and
$c(x+R_n)\to c^*(x)$ as $n\to\infty$ uniformly in $x\in\R$.
It follows that $\psi_\infty$ is a weak solution of 
$$\L^* \psi_\infty = \lambda\psi_\infty \ \hbox{ in } \R, \quad
\psi_\infty (0) = 1, \quad \psi_\infty\geq 0 \ \hbox{ in } \R,$$
where $\mathcal{L}^*$ is the limit operator defined by
$\mathcal{L}^*\phi:=\big( a^*(x)\phi'\big)'+c^*(x)\phi$.
It follows that $\psi_\infty\in \mathcal{C}^2(\R)$ and, by the strong maximum 
principle,
that $\psi_\infty>0$ in $\R$. Then the characterization (\ref{charI}) yields
$\lambda_1 (\L^*,\R)\leq\lambda$. But since $\lambda_1 
(\L^*,\R)=\lambda_1(\L,\R)$ by \cite[Lemma
5.6]{BerestyckiHamelRossi} because $\L$ is a.p., we conclude that $\lambda =
\lambda_1 (\L,\R)$. 

Finally, taking $\varphi(x)=\cos(\frac\pi{2R}x)$ in the characterization
\eqref{def-RayleighR} we deduce that, as $R\to\infty$,
$\lambda_1(\L,(-R,R))\geq \inf
c+O(R^{-2})$, whence, by \eqref{charR}, 
$\lambda_1(\L,\R)\geq \inf c$.
\end{proof}

In the sequel we will make frequent use of the following technical lemma, which 
is an immediate consequence of Lemma 2.2 in \cite{Nolen}. The latter 
was proved for $a\equiv 1$ but the reader could easily check that it holds 
true for an elliptic diffusion term $a$ satisfying the hypotheses of the
present paper. 


\begin{lem} \label{lem:Nolen}
For all $\gamma>\lambda_{1}(\L,\R)$, $x_0\in\R$ and $\e>0$, there exists $C>0$ 
such that any  
generalized subsolution $\phi\in \mathcal{C}([x_0,+\infty))$ of 
\begin{equation*}
\L\phi  =\gamma\phi \ \hbox{ in }
(x_0,+\infty), \quad \lim_{x\to+\infty}\phi(x)=0,
\end{equation*} 
satisfies
\begin{equation*}
\phi(x) \leq C \big(\max\{\phi(x_0),0\}\big) e^{-\left(\sqrt{\gamma
-\lambda_1(\L,\R)} -\e\right) x}.
\end{equation*}
\end{lem}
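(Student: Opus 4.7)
The plan is to compare $\phi$ with an exponentially decaying positive supersolution $\Phi$ of $\L\Phi=\gamma\Phi$ on $(x_0,+\infty)$ satisfying $\Phi(x_0)$ proportional to $\max\{\phi(x_0),0\}$ and $\Phi(x)\leq C'\,e^{-\alpha(x-x_0)}$ with $\alpha:=\sqrt{\gamma-\lambda_1(\L,\R)}-\e$. Once $\Phi$ is built, the comparison principle---valid because $\gamma>\lambda_1(\L,\R)\geq\lambda_1(\L,(x_0,+\infty))$ and $\Phi$ itself serves as the positive supersolution needed to justify it---forces $\phi\leq\Phi$ on $(x_0,+\infty)$, which is the stated bound. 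Since the statement is asserted to be immediate from Nolen's \cite[Lemma 2.2]{Nolen}, the real work is to check that Nolen's construction for $a\equiv1$ carries over to the divergence-form operator with variable elliptic $a$.

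For the construction of $\Phi$ I would first fix $\delta>0$ small so that $\alpha^2\sup a+\delta<\gamma-\lambda_1(\L,\R)$, and then use Proposition~\ref{lem-caraclambda1} (together with the a.p.~structure, which yields a translation-uniform version via compactness of the hull) to pick $R$ so large that $\lambda_1(\L,(y,y+2R))>\lambda_1(\L,\R)-\delta/2$ for every $y\in\R$. Letting $\psi_y\in\mathcal{C}^2([y,y+2R])$ denote the Dirichlet principal eigenfunction normalized by $\psi_y(y+R)=1$, standard Harnack and interior elliptic estimates give uniform bounds on $\psi_y$ and on $|\psi_y'/\psi_y|$ on the middle third $[y+2R/3,y+4R/3]$. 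I would cover $[x_0,+\infty)$ by overlapping middle thirds and set
\[
\Phi(x):=A_k\,\psi_{x_0+kR/3}(x)\,e^{-\alpha(x-x_0)}
\]
on each tile, choosing $A_k>0$ inductively to make $\Phi$ continuous at the junctions. A direct expansion of $\L(\psi_y\, e^{-\alpha(x-x_0)})$ gives on each tile
\[
\L\Phi-\gamma\Phi=\bigl(\lambda_{1}(\L,(y,y+2R))+\alpha^2 a-\alpha a'-2\alpha a\,\psi_y'/\psi_y-\gamma\bigr)\Phi,
\]
and by the choices above, together with the uniform bound on $\psi_y'/\psi_y$ in the middle third, a slight further decrease of $\alpha$ (absorbable into $\e$) makes this coefficient $\leq-\delta/4<0$; hence $\Phi$ is a classical supersolution in the interior of each tile, and the viscosity supersolution property at the junctions follows from the Hopf-type sign of $\psi_y'$ at the boundary of its interval and from the uniform Harnack bounds.

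The hard part is exactly this tiling step: the drift factor $\psi_y'/\psi_y$ blows up near the endpoints where $\psi_y$ vanishes, so the covering must use overlapping middle thirds, and the amplitudes $A_k$ must be chosen carefully so that $\Phi$ remains a viscosity supersolution across each junction. For $a\equiv1$ this is precisely Nolen's Lemma~2.2; for variable $\mathcal{C}^1$ elliptic $a$ with $\inf a>0$ the same argument applies, only the quantitative Harnack and Hopf constants change---which is why the authors simply invoke \cite{Nolen} and assert the extension without further detail.
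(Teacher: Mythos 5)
The paper offers no proof of this lemma---it invokes Nolen's Lemma~2.2 (proved for $a\equiv1$) and asserts the extension to variable $a$ is routine---so your reconstruction must stand on its own, and it does not. The decisive problems are in the barrier construction. First, your opening step ``fix $\delta>0$ so that $\alpha^2\sup a+\delta<\gamma-\lambda_1(\L,\R)$'' with $\alpha=\sqrt{\gamma-\lambda_1(\L,\R)}-\e$ is impossible whenever $\sup a>1$ and $\e$ is small; in fact the exponent as stated cannot hold for variable $a$ (take $a\equiv4$, $c\equiv1$, $\gamma=2$: then $\lambda_1=1$ by \eqref{def-RayleighR}, the decaying solution is $e^{-x/2}$, and $e^{-x/2}\le Ce^{-(1-\e)x}$ fails for $\e<1/2$), so any correct extension must replace $\sqrt{\gamma-\lambda_1}$ by a quantity involving $\sup a$ and $\sup|a'|$---a discrepancy you should have flagged rather than absorbed. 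Second, and independently, the drift term $-2\alpha a\,\psi_y'/\psi_y$ in your expansion is of size $2\alpha\sup a\cdot M$ with $M=\sup|\psi_y'/\psi_y|$ on the middle third; $M$ is a Harnack constant of order one that does \emph{not} tend to $0$ as $R\to\infty$ (in that limit $\psi_y'/\psi_y$ converges to the logarithmic derivative of a ground state of $\L$, generically non-constant), so ``a slight further decrease of $\alpha$'' cannot absorb it. Negativity of your coefficient forces $\gamma-\lambda_1>\alpha^2\sup a+\alpha\bigl(\sup|a'|+2M\sup a\bigr)$, which caps $\alpha$ strictly below the claimed rate by a margin that does not vanish with $\e$---this already defeats the argument when $a\equiv1$. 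Third, continuity of $\Phi$ at the junctions does not make it a generalized supersolution there: one needs the kink to be concave (left derivative $\ge$ right derivative, so that $(a\Phi')'$ acquires a nonpositive Dirac mass), which choosing the $A_k$ for continuity does not ensure; the natural fix is to take the pointwise minimum of overlapping pieces and verify the minimum switches inside each overlap. Your appeal to a ``Hopf-type sign of $\psi_y'$'' is misplaced, since the junctions lie in the middle thirds, not at the endpoints of $(y,y+2R)$ where Hopf applies.

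The route that actually produces the sharp exponent (and is, in substance, what underlies Nolen's lemma) is an Agmon-type energy estimate rather than a pointwise barrier: test $(\L-\gamma)\phi\ge0$ against $e^{2\alpha x}\eta^2\phi_+$ with a cutoff $\eta$, integrate by parts, and apply the Rayleigh-quotient characterization \eqref{def-RayleighR} to $w=e^{\alpha x}\eta\,\phi_+$. For $a\equiv1$ the cross term $2\alpha\int e^{2\alpha x}\eta^2\phi\phi'$ integrates exactly into $-2\alpha^2\int w^2$ plus cutoff terms, yielding $(\gamma-\lambda_1-\alpha^2)\int e^{2\alpha x}\phi_+^2\le C\,\phi_+(x_0)^2$ for every $\alpha<\sqrt{\gamma-\lambda_1}$, and the pointwise bound follows from local estimates for subsolutions; for variable $a$ the same computation leaves the extra terms $\alpha^2(\sup a-1)$ and $\alpha\sup|a'|$, which is exactly why the exponent must be adjusted. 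The correction is harmless for the paper, which only uses the positivity of the decay rate and the fact that the resulting lower bound on $\mu(\gamma)$ tends to $+\infty$, but your write-up neither obtains the stated rate nor identifies why it cannot be obtained.
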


%


\begin{proof}[Proof of Proposition \ref{prop-phigamma}(i).] 
The conclusion holds even when $a$ and $c$ are not a.p.,
but just bounded so that $\lambda_1(\L,\R)$ is finite.
The proof is very close to that of Theorem 1.1 in \cite{Nolen}.
Take $\gamma>\lambda_1(\L,\R)$. Hence, $\gamma>\lambda_1 (\L,(0,R))$ for all
$R>0$, which implies that
the principal eigenvalue of the operator $\L-\gamma$ with Dirichlet boundary 
conditions in $(0,R)$ is negative. There exists then a unique positive
solution $\phi_\gamma^R$ of
\begin{equation}\label{eq-u_R}  \big( a(x)(\phi_\gamma^R)'\big)' + 
(c(x)-\gamma) \phi_\gamma^R  = 0 \ \hbox{ in }
(0,R), \quad \phi_\gamma^R(0)=1, \quad \phi_\gamma^R(R)=0. \end{equation} 
By the comparison principle, the family $(\phi_\gamma^R)_{R>0}$ is increasing in
$R$.
By Lemma \ref{lem:Nolen}, 
for all $\e>0$, there exists a constant $C=C (\e,\gamma)$ so that 
\begin{equation}\label{eq-expdecphi} 
\forall R>0,\ x\in (0,R),\quad
\phi_\gamma^R (x) \leq C e^{-\left(\sqrt{\gamma
-\lambda_1(\L,\R)} -\e\right) x}.\end{equation}
Hence, one can define $\phi(x) := \lim_{R\to +\infty} \phi_\gamma^R (x)$ for 
$x\geq0$. This 
limit belongs to $\mathcal{C}^2 ([0,\infty))$ 
and satisfies $\big( a(x)\phi'\big)'+ c(x)\phi = \gamma\phi$ over $(0,\infty)$ 
and $\phi (0)=1$.
Moreover, taking $0<\e < \sqrt{\gamma-\lambda_1 (\L, \R)}$, one gets from
(\ref{eq-expdecphi}) 
that $\lim_{x\to +\infty} \phi (x) = 0$. 

Next, we consider the unique $\mathcal{C}^2$ extension of $\phi$ to the full 
real line satisfying $\L\phi= \gamma \phi$. 
It is only left to check that this extension,
that we still denote $\phi$, is positive. Assume that it is not true and define
$x_0 := \sup \{ x\in\R, \ \phi (x) =0\}$.  
Define 
$$\varphi (x) := \left\{ \begin{array}{ccl} 
                               0 &\hbox{ if }& x<x_0,\\
\phi (x) &\hbox{ if }& x\geq  x_0.\\
                              \end{array}\right.$$
Then $\varphi \in H^1_0 (\R)$ since $\phi$ converges exponentially to $0$ as
$x\to +\infty$ and $\phi' \in L^{2} (x_{0},\infty)$ using equation $\big( a(x)\phi'\big)'+ c(x)\phi = \gamma\phi$. Taking $\varphi$ as a test-function in (\ref{def-RayleighR})
yields $\lambda_1 (\L,\R) \geq\gamma$, which is a contradiction.  

Lastly, the uniqueness follows   either from the characterization of the 
validity of the maximum principle in terms of the sign of $\l$ derived in 
\cite[Theorems 1.6 and 1.9]{BerRossipreprint}, or from Lemma \ref{lem:Nolen}. 
Indeed, for 
instance, applying  the latter with $x_0=0$ and $\phi$ equal to the difference 
of two solutions $\phi_1$, $\phi_2$ of (\ref{eq-prop-phigamma}), 
yields $\phi_1\leq\phi_2$ on $\R_+$. Then, exchanging $\phi_1$ and $\phi_2$ we 
eventually derive $\phi_1\equiv\phi_2$ on $\R_+$, and thus on the whole $\R$ by 
uniqueness of the Cauchy problem.
\end{proof}

\begin{lem}\label{lem-phiunbounded}
For all $\gamma > \lambda_1 (\L,\R)$, the function $\phi_\gamma$ is unbounded. 
\end{lem}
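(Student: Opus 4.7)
The plan is to argue by contradiction: assume $\phi_\gamma$ is bounded on $\R$, say by $M>0$, and derive $\lambda_1(\L,\R)\geq\gamma$, which contradicts the hypothesis $\gamma>\lambda_1(\L,\R)$. The key tool is the Rayleigh quotient characterization \eqref{def-RayleighR} of $\lambda_1(\L,\R)$, tested against truncations of $\phi_\gamma$ itself.

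Fix $\eta_R\in\mathcal{C}_c^\infty(\R)$ equal to $1$ on $[-R,R]$, supported in $[-R-1,R+1]$, with $\|\eta_R'\|_\infty\leq 2$, and set $\psi_R:=\eta_R\phi_\gamma\in H^1_0(\R)$. The first step is to derive the identity
\begin{equation*}
\frac{\int_\R \left(c\psi_R^2-a(\psi_R')^2\right)}{\int_\R \psi_R^2}=\gamma-\frac{\int_\R a(\eta_R')^2\phi_\gamma^2}{\int_\R \eta_R^2\phi_\gamma^2}.
\end{equation*}
This comes from multiplying $\L\phi_\gamma=\gamma\phi_\gamma$ by $\eta_R^2\phi_\gamma$ and integrating by parts (boundary terms vanish by compact support of $\eta_R$), then expanding $(\psi_R')^2=\eta_R^2(\phi_\gamma')^2+2\eta_R\eta_R'\phi_\gamma\phi_\gamma'+(\eta_R')^2\phi_\gamma^2$ and observing that the cross terms $2\int a\eta_R\eta_R'\phi_\gamma\phi_\gamma'$ cancel against their counterparts coming from the integration by parts.

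The second step is to show the correction term on the right vanishes as $R\to\infty$, which combined with \eqref{def-RayleighR} forces $\lambda_1(\L,\R)\geq\gamma$. Its numerator is bounded uniformly in $R$ by $8\|a\|_\infty M^2$, since $\eta_R'$ is supported on two intervals of length $1$ with $|\eta_R'|\leq 2$ and $\phi_\gamma\leq M$. The denominator is handled in two sub-cases. If $\phi_\gamma\notin L^2(\R)$, then $\int_\R \eta_R^2\phi_\gamma^2\geq\int_{-R}^R \phi_\gamma^2\to\infty$, so the ratio tends to $0$ directly. If instead $\phi_\gamma\in L^2(\R)$, the denominator tends to $\int_\R\phi_\gamma^2>0$ (using $\phi_\gamma(0)=1$), while the numerator itself tends to $0$ since $\int_{-R-1}^{-R}\phi_\gamma^2+\int_R^{R+1}\phi_\gamma^2\to 0$ by integrability. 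Either way the correction vanishes.

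The only real obstacle is setting up the integration-by-parts identity so that the cross terms cancel cleanly; everything else amounts to bookkeeping. Notably, the exponential decay of $\phi_\gamma$ at $+\infty$ from Proposition \ref{prop-phigamma} is not needed here, because the compact support of $\eta_R$ already makes every boundary term vanish.
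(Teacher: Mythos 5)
Your proof is correct. The integration-by-parts identity
\[
\frac{\int_\R (c\psi_R^2 - a(\psi_R')^2)}{\int_\R \psi_R^2}
= \gamma - \frac{\int_\R a(\eta_R')^2\phi_\gamma^2}{\int_\R \eta_R^2\phi_\gamma^2}
\]
follows exactly as you say from multiplying $\L\phi_\gamma=\gamma\phi_\gamma$ by $\eta_R^2\phi_\gamma$, integrating, and expanding $(\psi_R')^2$; the cross terms $2\int a\eta_R\eta_R'\phi_\gamma\phi_\gamma'$ cancel, leaving only the clean remainder $\int a(\eta_R')^2\phi_\gamma^2$. The two-case analysis on the denominator is complete: if $\phi_\gamma\notin L^2$ the denominator blows up while the numerator stays bounded by $8\|a\|_\infty M^2$; if $\phi_\gamma\in L^2$ the numerator itself tends to $0$ while the denominator tends to a positive limit. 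In both cases the Rayleigh quotient \eqref{def-RayleighR} forces $\lambda_1(\L,\R)\geq\gamma$, the desired contradiction.

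This is, however, a genuinely different route from the paper's. The paper instead considers the quantity $\lambda_1'(\L,\R):=\sup\{\lambda:\exists\,\vp\in\mathcal{C}^2(\R)\cap L^\infty(\R),\ \vp>0,\ \L\vp\geq\lambda\vp\}$, observes that a bounded $\phi_\gamma$ is an admissible test function giving $\lambda_1'\geq\gamma$, and then appeals to the result of Berestycki--Hamel--Rossi \cite{BerestyckiHamelRossi} that $\lambda_1'=\lambda_1$ for self-adjoint operators. The paper's proof is shorter and modular, treating the eigenvalue equality as a black box; your argument unpacks that black box, giving a self-contained variational proof in the spirit of Agmon. In fact, the cutoff-and-Rayleigh-quotient argument you give is essentially how the $\lambda_1'=\lambda_1$ equality is established for self-adjoint operators in the cited reference, so your proof can be read as inlining the relevant lemma. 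The trade-off is length versus self-containment; both are sound.
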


\begin{proof} Assume that $\phi_\gamma$ is bounded. Define 
$$\lambda_1' (\L,\R):= \sup \{ \lambda\in\R, \ \exists \varphi \in\mathcal{C}^2 
(\R)\cap L^\infty (\R),
 \ \phi>0 \hbox{ in } \R, \ \L \phi \geq \lambda \phi \hbox{ in } \R\}.$$
As $\phi_\gamma$ is bounded, one can take $\phi=\phi_\gamma$ in the above
formula, which gives $\lambda_1' (\L,\R) \geq \gamma$. On the other hand, 
it has been proved in \cite{BerestyckiHamelRossi} that, as $\L$ is self-adjoint, $\lambda_1'(\L,\R) = \lambda_1 (\L,\R)$. This contradicts $\gamma > \lambda_1 (\L,\R)$. 
\end{proof}

\begin{lem}\label{lem-ap}
For all $\gamma >  
\lambda_1 (\L,\R)$,
the function $\phi_\gamma'/\phi_\gamma$ is a.p.
%
\end{lem}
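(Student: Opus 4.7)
The plan is to verify Bochner's criterion (Definition \ref{defalmostper}) directly for $\psi:=\phi_\gamma'/\phi_\gamma$: from any sequence $(x_n)\subset\R$, I will extract a subsequence along which $\psi(\cdot+x_n)$ converges uniformly on $\R$.

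As a preliminary, I show that $\psi$ is bounded and Lipschitz on $\R$. Applying Harnack's inequality to the positive solution $\phi_\gamma$ on each unit interval, together with interior gradient estimates, I obtain a constant $K$, depending only on the uniform bounds on $a,c$, such that $|\phi_\gamma'(x)|\leq K\phi_\gamma(x)$ for all $x\in\R$, so $\|\psi\|_\infty\leq K$. From $\L\phi_\gamma=\gamma\phi_\gamma$ one derives the Riccati equation
\begin{equation*}
a\psi'+a\psi^2+a'\psi+c-\gamma=0,
\end{equation*}
which then forces $\|\psi'\|_\infty<\infty$.

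Given any $(x_n)$, by almost periodicity of $a,a',c$ and a diagonal Arzela--Ascoli extraction, I pass to a subsequence (still denoted $(x_n)$) along which $a(\cdot+x_n),a'(\cdot+x_n),c(\cdot+x_n)$ converge uniformly on $\R$ to a.p.~limits $a^*,(a^*)',c^*$, and $\psi(\cdot+x_n)\to\psi^*$ locally uniformly on $\R$. The functions $\phi_n(x):=\phi_\gamma(x+x_n)/\phi_\gamma(x_n)$ then converge in $C^2_{\text{loc}}$ to $\phi^*(x):=e^{\int_0^x\psi^*}$, which satisfies $\L^*\phi^*=\gamma\phi^*$ with $\phi^*(0)=1$, $\L^*$ denoting the limit operator. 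Lemma \ref{lem:Nolen} applied to the translated operators (its constant being translation-invariant, as it depends only on the uniform bounds on the coefficients) gives $\phi_n(x)\leq Ce^{-(\sqrt{\gamma-\lambda_1}-\e)x}$ on $[0,+\infty)$ uniformly in $n$, so that $\phi^*$ decays at $+\infty$. Since $\lambda_1(\L^*)=\lambda_1$ by \cite[Lemma 5.6]{BerestyckiHamelRossi}, Proposition \ref{prop-phigamma}(i) applied to $\L^*$ forces $\phi^*$ to coincide with the unique positive solution given by that proposition for $\L^*$. Hence $\psi^*$ is uniquely determined by the limit operator $\L^*$.

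The main obstacle is then to upgrade this local uniform convergence to uniform convergence on $\R$. I argue by contradiction: suppose there exist $\e>0$, $n_k\to\infty$, and $y_k\in\R$ with $|\psi(y_k+x_{n_k})-\psi^*(y_k)|>\e$. The uniform Lipschitz regularity of $(\psi(\cdot+x_n))$ and of $\psi^*$ combined with the local uniform convergence force $|y_k|\to\infty$; assume $y_k\to+\infty$ (the case $-\infty$ is symmetric, since the shifted $\phi^*_\gamma(\cdot+y_k)/\phi^*_\gamma(y_k)$ still decays at $+\infty$). Applying Bochner's double-limit characterization---exactly as in the proof of Proposition \ref{prop:hypimply}(3)---to the pair $((x_n),(y_k))$ for each of $a,a',c$, I extract common subsequences along which $a^*(\cdot+y_k)$ and $a(\cdot+x_{n_k}+y_k)$ both converge uniformly on $\R$ to the same limit $\hat a$, and similarly for $a',c$; call $\hat\L$ the corresponding limit operator. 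Invoking the previous paragraph twice---once for $\L$ and the sequence $z_k:=x_{n_k}+y_k$, and once for the a.p.~operator $\L^*$ and the sequence $(y_k)$---and using the uniqueness in Proposition \ref{prop-phigamma}(i) applied to $\hat\L$, I deduce that both $\psi(\cdot+z_k)$ and $\psi^*(\cdot+y_k)$ converge locally uniformly to the same function $\hat\psi$, the log-derivative of the distinguished solution for $\hat\L$. Evaluating at $x=0$ yields
\begin{equation*}
\psi(y_k+x_{n_k})=\psi(z_k)\to\hat\psi(0)\qquad\text{and}\qquad\psi^*(y_k)\to\hat\psi(0),
\end{equation*}
contradicting $|\psi(y_k+x_{n_k})-\psi^*(y_k)|>\e$. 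Therefore $\psi(\cdot+x_n)\to\psi^*$ uniformly on $\R$, which is Bochner's criterion for $\psi$ to be almost periodic.
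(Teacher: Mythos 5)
Your proof is correct and essentially the same as the paper's: extract a subsequence of translates so the coefficients converge, identify the locally uniform limit of $\psi(\cdot+x_n)$ with $(\phi^*)'/\phi^*$ via the uniqueness in Proposition~\ref{prop-phigamma}(i) together with the decay provided by Lemma~\ref{lem:Nolen}, and upgrade to uniform convergence by a contradiction that translates once more to a common limit operator and uses uniqueness again. A couple of your steps are superfluous but harmless: the reduction to $|y_k|\to\infty$ and the split between $\pm\infty$ are not needed (the argument applies verbatim for bounded or unbounded $(y_k)$), and the paper replaces your appeal to Bochner's double-limit characterization by the direct observation $\sup_x|a(x+x_n+y_n)-a^*(x+y_n)|=\sup_x|a(x+x_n)-a^*(x)|\to0$, which shows the two translated coefficient families share the same uniform limit.
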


\begin{proof} 
Take $\gamma > \lambda_1 (\L,\R)$.
Consider a sequence $(x_n)_n$ in $\R$. Then, up to subsequences, $(a(\cdot + x_n))_n$
and $(c(\cdot + x_n))_n$ converge uniformly to some
$a^*, c^*\in\mathcal{C} (\R)$. 
The operator $\mathcal{L}^*$ defined by
$\mathcal{L}^*\phi:=\big( a^*(x)\phi'\big)'+c^*(x)\phi$ is a limit operator associated with
$\mathcal{L}$. Hence, since $a$ and $c$ are a.p., Lemma 5.6 of
\cite{BerestyckiHamelRossi} yields $\lambda_1 (\L^*,\R)=\lambda_1
(\L,\R)<\gamma$.
We can therefore apply Proposition \ref{prop-phigamma}(i) to $\L^*$ and
infer that there exists a positive function $\phi^*\in\mathcal{C}^2 (\R)$ such
that
$$\L^*\phi^*  = \gamma \phi^* \ \hbox{ in } \R, \quad \phi^*(0)=1, \quad
\lim_{x\to +\infty} \phi^*(x)=0.$$ 
We prove the lemma by showing that $\phi_\gamma'(\cdot+x_n)/\phi_\gamma
(\cdot+x_n)$ converges up to subsequences to $(\phi^*)'/\phi^*$ uniformly in
$x\in\R$.
Assume by way of contradiction that this is not the case. There exists then a
sequence $(y_n)_n$ such that, up to extraction,
\begin{equation} \label{lemapcontrad} 
\lim_{n\to\infty}\left|\frac{\phi_\gamma
'(y_n+x_n)}{\phi_\gamma (y_n+x_n)} - \frac{(\phi^*)'(y_n)}{\phi^*
(y_n)}\right|>0.
\end{equation}
One can assume, always up to extraction, that as $n\to\infty$, $\big( a(\cdot + x_n+y_n) \big)_n$ and  $\big( c(\cdot + x_n+y_n) \big)_n$
converge to some $a^{**}$ and $c^{**}$ uniformly in $\R$. 
It is easy to check that 
$a^*(\cdot+y_n) \to a^{**}$ and $c^*(\cdot+y_n) \to c^{**} $ as $n\to\infty$ 
uniformly
in $x\in\R$. 

Next, set $\psi_n (x):= \phi_\gamma (x+x_n+y_n)/\phi_\gamma (x_n+y_n)$. Since 
$\phi_\gamma$ satisfies $\L\phi_\gamma=\gamma\phi_\gamma$, the 
Harnack inequality together with interior elliptic estimates imply that the 
sequence $(\psi_n)_n$ is bounded in $\mathcal{C}^{1,\alpha}(I)$ for any 
$\alpha\in(0,1)$ and any bounded interval $I$. It follows from the Ascoli
theorem 
that $(\psi_n)_{n\in\N}$ converges (up to extraction) in 
$\mathcal{C}^1_{loc}(\R)$ to some function $\psi_\infty$; expressing 
$\psi_n''$ from the equation $\L\psi_n=\gamma\psi_n$ we deduce that 
the convergence actually holds in $\mathcal{C}^2_{loc}(\R)$.
The function $\psi_\infty$ is positive and satisfies
\begin{equation}\label{eq:**}
\big(a^{**} (x)\psi_\infty'\big)' + c^{**}(x) \psi_\infty  = \gamma 
\psi_\infty \ \hbox{ in } \R, \quad \psi_\infty(0)=1.
\end{equation}
Furthermore, we know from Lemma \ref{lem:Nolen} that, for given $\e>0$, there 
is $C>0$ such that
$$\forall x>0,\quad
\phi_\gamma (x+x_n+y_n) \leq C\phi_\gamma (x_n+y_n)
e^{-\left(\sqrt{\gamma -\lambda_1(\L,\R)} -\e\right) x},$$
and thus $\psi_\infty (x) \leq  C e^{-\big(\sqrt{\gamma -\lambda_1(\L,\R)} 
-\e\big) x}$. This implies $\lim_{x\to +\infty} \psi_\infty (x) = 0$. 

Similarly, defining $\varphi_n (x) := \phi^*(x+y_n) /\phi^* (y_n)$ for all $n$
and $x\in\R$, one can prove that, up to extraction, the sequence $(\varphi_n)_n$
converges in $\mathcal{C}^2_{loc} (\R)$ to a solution $\varphi_\infty$ of 
\eqref{eq:**}. 
Moreover $\lim_{x\to +\infty} \varphi_\infty (x) = 0$ again by Lemma 
\ref{lem:Nolen}. 
Proposition \ref{prop-phigamma} eventually yields that 
$\varphi_\infty \equiv \psi_\infty$. This is impossible because
$\varphi_\infty'(0)\neq\psi_\infty'(0)$ by (\ref{lemapcontrad}), which provides the final contradiction. 
\end{proof}

%
%
%
%

\begin{proof}[Proof of Proposition \ref{prop-phigamma}(ii)]
We write
$$\frac1x \ln \phi_\gamma (x) =  \frac{1}{x}\int_0^x
\frac{\phi_\gamma'(y)}{\phi_\gamma (y)} dy.$$
Since the function $\phi_\gamma'/\phi_\gamma$ is a.p.~by Lemma 
\ref{lem-ap}, it is well known that the average
\begin{equation}\label{average}
-\mu(\gamma):=\lim_{x\to\pm\infty}\frac{1}{x}\int_z^{z+x}
\frac{\phi_\gamma'(y)}{\phi_\gamma (y)} dy
\end{equation}
exists uniformly in $z\in\R$ (see, e.g., 
\cite{Bes,Fink}). We show in the next 
lemma that $\mu (\gamma) \geq \sqrt{\gamma - \lambda_{1}(\L,\R)}>0$, 
which concludes the proof of the statement. 
\end{proof}

\begin{lem} \label{lem-convexitymu}
The function $\gamma \mapsto \mu (\gamma)$ defined on 
$(\lambda_1(\mathcal{L},\R),+\infty)$ is concave, nondecreasing and there 
exists $C>0$ such that, for $\gamma>\lambda_1(\mathcal{L},\R)$,
\begin{equation}\label{<mu<}
\sqrt{\gamma - \lambda_{1}(\L,\R)}\leq \mu (\gamma)\leq C\sqrt{\gamma},
\end{equation}
$$\mu (\gamma)>\underline\mu:= \lim_{\gamma\searrow \lambda_1 
(\L,\R)} \mu (\gamma).$$
\end{lem}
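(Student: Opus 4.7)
My plan is to reduce each of the four assertions to a pointwise comparison between $\phi_\gamma$ and an explicit auxiliary function. All these comparisons are carried out on bounded intervals $(0,R)$ through the monotone approximation $\phi_\gamma^R\nearrow\phi_\gamma$ from the proof of Proposition \ref{prop-phigamma}(i), the maximum principle being available on $(0,R)$ for the operator $\gamma-\L$ because its principal Dirichlet eigenvalue $\gamma-\lambda_1(\L,(0,R))$ is strictly positive for every $\gamma>\lambda_1(\L,\R)$. With this in mind, the lower bound $\mu(\gamma)\geq\sqrt{\gamma-\lambda_1(\L,\R)}$ is immediate: passing to the logarithm in the decay estimate \eqref{eq-expdecphi}, dividing by $x>0$, letting $x\to+\infty$ and finally $\e\to 0$ yields the claim.

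For the upper bound $\mu(\gamma)\leq C\sqrt\gamma$ I construct a pointwise exponential lower bound on $\phi_\gamma$. From $\L(e^{-\alpha x})=(a\alpha^2-a'\alpha+c)e^{-\alpha x}$ and the standing assumptions $\inf a>0$ and $a,a',c$ bounded, there exists $C>0$ depending only on $a$ and $c$ such that $\alpha:=C\sqrt\gamma$ simultaneously satisfies $a\alpha^2-a'\alpha+c\geq\gamma$ and $a\alpha^2-a'\alpha\geq 0$ on $\R$, uniformly in $\gamma>\lambda_1(\L,\R)$. The truncated barrier $\psi_R(x):=e^{-\alpha x}-e^{-\alpha R}$ then verifies $\L\psi_R\geq\gamma\psi_R$ on $(0,R)$ --- the second condition on $\alpha$, together with $\psi_R\geq 0$, is what absorbs the possibly negative term $(\gamma-c)e^{-\alpha R}$ produced when $\L$ is applied to the constant --- and satisfies $\psi_R\leq\phi_\gamma^R$ at both endpoints. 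The maximum principle yields $\psi_R\leq\phi_\gamma^R$ on $(0,R)$, and letting $R\to+\infty$ gives $\phi_\gamma(x)\geq e^{-\alpha x}$ for $x\geq 0$, so $\mu(\gamma)\leq\alpha=C\sqrt\gamma$.

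For the concavity, I test the log-convex combination $\phi:=\phi_{\gamma_1}^t\phi_{\gamma_2}^{1-t}$ for $\gamma_1,\gamma_2>\lambda_1(\L,\R)$ and $t\in[0,1]$. Setting $v_i:=\phi_{\gamma_i}'/\phi_{\gamma_i}$, a direct computation gives
$$\frac{\L\phi}{\phi}=t\gamma_1+(1-t)\gamma_2-a(x)t(1-t)(v_1-v_2)^2\leq t\gamma_1+(1-t)\gamma_2=:\gamma,$$
so $\phi$ is a positive subsolution of $\L u=\gamma u$ with $\phi(0)=1$ and $\phi(x)\to 0$ as $x\to+\infty$. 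The comparison with $\phi_\gamma^R$ on $(0,R)$, where now $\phi(R)>0=\phi_\gamma^R(R)$, goes through by the same maximum principle and, upon letting $R\to+\infty$, produces $\phi_{\gamma_1}^t\phi_{\gamma_2}^{1-t}\geq\phi_\gamma$ on $[0,+\infty)$. Taking logarithms, dividing by $x$, and passing to the limit converts this into $\mu(\gamma)\geq t\mu(\gamma_1)+(1-t)\mu(\gamma_2)$. Monotonicity is a degenerate special case: for $\gamma_1<\gamma_2$, the function $\phi_{\gamma_1}$ is itself a positive subsolution of $\L u=\gamma_2 u$ since $\L\phi_{\gamma_1}=\gamma_1\phi_{\gamma_1}\leq\gamma_2\phi_{\gamma_1}$, and the identical argument gives $\phi_{\gamma_1}\geq\phi_{\gamma_2}$, hence $\mu(\gamma_1)\leq\mu(\gamma_2)$.

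For the strict inequality $\mu(\gamma)>\underline\mu$, suppose by contradiction that $\mu(\gamma_0)=\underline\mu$ for some $\gamma_0>\lambda_1(\L,\R)$. Monotonicity together with $\underline\mu=\inf_{\gamma>\lambda_1(\L,\R)}\mu(\gamma)$ forces $\mu\equiv\underline\mu$ on $(\lambda_1(\L,\R),\gamma_0]$. For any $\gamma_3>\gamma_0$, writing $\gamma_0=s\gamma_2+(1-s)\gamma_3$ with $\gamma_2\in(\lambda_1(\L,\R),\gamma_0)$ and $s\in(0,1)$, concavity gives $\underline\mu\geq s\underline\mu+(1-s)\mu(\gamma_3)$, hence $\mu(\gamma_3)\leq\underline\mu$; combining with monotonicity extends the equality to all of $(\lambda_1(\L,\R),+\infty)$, contradicting the lower bound $\mu(\gamma)\geq\sqrt{\gamma-\lambda_1(\L,\R)}\to+\infty$. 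The step that will demand the most care is the upper bound: fitting an exponential subsolution to the homogeneous Dirichlet condition at the moving endpoint $R$ forces the constant correction $-e^{-\alpha R}$, and keeping the subsolution inequality intact when $c>\gamma$ is precisely what locks in the two simultaneous constraints on $\alpha$.
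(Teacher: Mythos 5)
Your proof is correct, and the overall architecture coincides with the paper's for the lower bound, concavity, and strict inequality $\mu(\gamma)>\underline\mu$: both use the exponential decay coming from Lemma~\ref{lem:Nolen} for the lower bound, the same log-convex combination $\phi_{\gamma_1}^{1-\theta}\phi_{\gamma_2}^{\theta}$ with the same algebraic identity for concavity, and the same ``concave + nondecreasing + unbounded from below by $\sqrt{\gamma-\lambda_1}$'' contradiction for strictness (you merely spell out the concavity step that the paper leaves implicit). The two proofs genuinely diverge on the other two points. For the upper bound $\mu(\gamma)\leq C\sqrt\gamma$, the paper simply invokes Lemma~2.6 of Nolen's paper, remarking that it adapts to variable $a$; you instead build the barrier $\psi_R(x)=e^{-\alpha x}-e^{-\alpha R}$ from scratch, which is self-contained and makes explicit where the two constraints on $\alpha$ come from (the subtracted constant, forced by the Dirichlet condition at $R$, contributes the term $(\gamma-c)e^{-\alpha R}$ that must be absorbed, and your pair of inequalities $a\alpha^2-a'\alpha+c\geq\gamma$ and $a\alpha^2-a'\alpha\geq 0$ handles both signs of $\gamma-c$). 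For monotonicity, the paper applies Lemma~\ref{lem:Nolen} directly to the difference $\phi_{\gamma_2}-\phi_{\gamma_1}$ on the half-line, while you compare with $\phi_{\gamma_2}^R$ on $(0,R)$ and pass to the limit; both are fine, the paper's is slightly faster.

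One small caveat on terminology: you consistently call a function with $\L\phi\leq\gamma\phi$ a ``subsolution,'' whereas the paper (and the statement of Lemma~\ref{lem:Nolen}) uses the opposite convention, a subsolution being one satisfying $\L\phi\geq\gamma\phi$. This does not affect the correctness of any of your maximum-principle steps, since you always write out the inequality explicitly, but it reads awkwardly next to the upper-bound step where the same inequality $\L\psi_R\geq\gamma\psi_R$ is used and the comparison goes the other way; it would be cleaner to align with the paper's usage.
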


\begin{proof} Take $\lambda_1(\mathcal{L},\R)<\gamma_1<\gamma_2$ and $\theta\in
(0,1)$. Call $\gamma:= (1-\theta ) \gamma_1+\theta \gamma_2$ and 
$\psi (x) := \phi_{\gamma_1}^{1-\theta} (x) \phi_{\gamma_2}^\theta (x)$. A
straightforward computation yields 
\[\begin{array}{ll}
\big(a(x)\psi'\big)' &= -a(x) \theta (1-\theta) \psi \left(
\frac{\phi_{\gamma_1}'}{\phi_{\gamma_1}}-\frac{\phi_{\gamma_2}'}{\phi_{\gamma_2}
}\right)^2 + a(x)(1-\theta) \left(
\frac{\phi_{\gamma_2}}{\phi_{\gamma_1}}\right)^\theta 
\phi_{\gamma_1}''
+a(x)\theta \left( \frac{\phi_{\gamma_1}}{\phi_{\gamma_2}}\right)^{1-\theta}
\phi_{\gamma_2}'' \\ 

& + a'(x)(1-\theta) \left(
\frac{\phi_{\gamma_2}}{\phi_{\gamma_1}}\right)^\theta 
\phi_{\gamma_1}'
+a'(x)\theta \left( \frac{\phi_{\gamma_1}}{\phi_{\gamma_2}}\right)^{1-\theta}
\phi_{\gamma_2}'\\

&\leq (1-\theta)
\left(\frac{\phi_{\gamma_2}}{\phi_{\gamma_1}}\right)^\theta \left(
\gamma_1-c(x) \right)\phi_{\gamma_1}
+\theta \left( \frac{\phi_{\gamma_1}}{\phi_{\gamma_2}}\right)^{1-\theta}  \left(
\gamma_2-c(x) \right)\phi_{\gamma_2}\\ 
&=(\gamma-c(x))\psi.\\
\end{array}\]
The maximum principle, together with the boundary conditions at $0$ and $+\infty$, gives
$\phi_{\gamma} (x) \leq \psi (x) =\phi_{\gamma_1}^{1-\theta} (x)
\phi_{\gamma_2}^\theta (x)$ for all $x>0$. 
It follows that 
$$\forall x>0,\quad
-\frac1x\ln\phi_{\gamma} (x) \geq -\frac{(1-\theta)}{x} \ln
\phi_{\gamma_1}(x) -\frac{\theta}{x} \ln  \phi_{\gamma_2} (x),$$
whence, letting $x$ go to $+\infty$, we eventually get $\mu (\gamma) \geq
(1-\theta) \mu (\gamma_1) + \theta \mu(\gamma_2)$. 

Next,
$\phi_{\gamma_2}$ is a subsolution of the equation satisfied by 
$\phi_{\gamma_1}$ in $[0,\infty)$ and thus,  
applying Lemma \ref{lem:Nolen} 
to $\phi_{\gamma_2}-\phi_{\gamma_1}$ shows that
$\phi_{\gamma_2}\leq\phi_{\gamma_1}$ in $[0,\infty)$. Hence, we obtain the 
monotonicity of $\gamma \mapsto \mu (\gamma)$. 

Lemma \ref{lem:Nolen} yields that, for any $\e>0$, 
$$\mu(\gamma)=-\lim_{x\to +\infty} \frac{1}{x}\ln \phi_\gamma (x)\geq
-\lim_{x\to +\infty} \frac{C}{x}+\sqrt{\gamma - \lambda_{1}(\L,\R)}-\e
=\sqrt{\gamma - \lambda_{1}(\L,\R)}-\e.$$ 
Finally, assume by way of contradiction that there exists 
$\gamma>\lambda_{1}(\L,\R)$ 
such that $\mu (\gamma) = \underline{\mu}$. Then $\mu (\gamma) = 
\underline{\mu}$ for all $\gamma>\lambda_{1}(\L,\R)$ by monotonicity and 
concavity, which contradict $\mu (\gamma) \geq \sqrt{\gamma - 
\lambda_{1}(\L,\R)}$. 

Lastly, it is easy to check that Lemma 2.6 of 
\cite{Nolen} holds true in our framework, even if $a$ is 
heterogeneous, providing a constant $C$ such that 
$\mu (\gamma) \leq C \sqrt{\gamma}$.
\end{proof}

We conclude this section by the analysis of the limit of the exponent
$\mu(\gamma)$ as 
$\gamma\searrow \lambda_1$. 

\begin{prop}\label{prop:underlinemu}
Under Hypothesis \ref{hyp-crit} it holds that
$$\underline{\mu}:=\lim_{\gamma\searrow \lambda_1} \mu (\gamma)=0.$$
\end{prop}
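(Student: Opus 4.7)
The plan is to argue by contradiction: assume $\underline{\mu}>0$ and derive a contradiction via the $1$D theory of critical operators. Take $\gamma_n\searrow\lambda_1$. By the Harnack inequality, interior Schauder estimates, and the normalization $\phi_{\gamma_n}(0)=1$, the family $\{\phi_{\gamma_n}\}$ is relatively compact in $\mathcal{C}^2_{\mathrm{loc}}(\R)$. After extraction $\phi_{\gamma_n}\to\phi_\infty$ locally uniformly, with $\phi_\infty>0$, $\phi_\infty(0)=1$, and $\L\phi_\infty=\lambda_1\phi_\infty$ on $\R$. Since $\L$ is trivially a limit operator of itself (take $x_n\equiv 0$), Hypothesis~\ref{hyp-crit} forces $\L-\lambda_1$ to be critical. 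The standard $1$D characterization of criticality (\cite[Appendix 1]{Murata}, \cite[Theorem 4.3.4]{Pinsky}) then yields that any positive solution $\phi$ of $\L\phi=\lambda_1\phi$ satisfies
\[
\int_{0}^{+\infty}\frac{dy}{a(y)\phi(y)^{2}}=\int_{-\infty}^{0}\frac{dy}{a(y)\phi(y)^{2}}=+\infty;
\]
in particular this holds for $\phi_\infty$.

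Set $f_n:=\phi_{\gamma_n}'/\phi_{\gamma_n}$. Lemma~\ref{lem-ap} ensures each $f_n$ is a.p., and by \eqref{average} its Bohr mean equals $-\mu(\gamma_n)\to -\underline{\mu}$. The Riccati identity $af_n'+af_n^2+a'f_n+c=\gamma_n$, together with the uniform Harnack bounds on $\phi_{\gamma_n}$, furnishes uniform $L^{\infty}$ bounds on $f_n$, hence on $f_n'$. After extraction, $f_n\to f_\infty:=\phi_\infty'/\phi_\infty$ in $\mathcal{C}^1_{\mathrm{loc}}(\R)$. The aim is to propagate the Bohr mean to the limit: once
\[
\lim_{x\to\pm\infty}\frac{1}{x}\int_0^x f_\infty(y)\,dy=-\underline{\mu}
\]
is established, we infer $\phi_\infty(x)\ge Ce^{(\underline{\mu}-\varepsilon)|x|}$ for $x\leq -R$ and any $\varepsilon\in(0,\underline{\mu})$, which forces $\int_{-\infty}^{0}dy/(a(y)\phi_\infty(y)^{2})<+\infty$ and contradicts the conclusion of the previous paragraph. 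Hence $\underline{\mu}=0$.

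The main obstacle is this transfer of Bohr mean from $f_n$ to its locally uniform limit $f_\infty$: local uniform convergence alone does not control long-range time-averages, and one needs to upgrade it to some form of equi-almost-periodicity of $\{f_n\}$. The natural strategy is to exploit the shared structure -- all $f_n$ are uniformly bounded a.p.~solutions of Riccati equations involving the same a.p.~coefficients $a,a',c$, differing only by the parameter $\gamma_n\to\lambda_1$ -- to deduce precompactness of $\{f_n\}$ in the Bohr (uniform) topology on a.p.~functions via a Bochner-hull compactness argument, after which the mean functional is continuous. A parallel route is to introduce the symmetric $\phi_\gamma^{\mathrm{left}}$ (the positive solution with $\phi_\gamma^{\mathrm{left}}(0)=1$ decaying at $-\infty$, constructed as in Proposition~\ref{prop-phigamma} by reversing orientation): by criticality of $\L-\lambda_1$ both $\phi_{\gamma_n}$ and $\phi_{\gamma_n}^{\mathrm{left}}$ must share the same normalized locally uniform limit $\phi_\infty$, and confronting the exponential rates on either side of the origin produces the same contradiction.
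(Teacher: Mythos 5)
Your overall structure — exploit the one-dimensional integral characterization of criticality to show that exponential growth/decay of the (putative) ground state $\phi_\infty$ is incompatible with $\L-\lambda_1$ being critical — is a reasonable plan, and you correctly identify the Murata/Pinsky criterion $\int_0^{\infty}\!\!\frac{dy}{a\phi^2}=\int_{-\infty}^0\!\!\frac{dy}{a\phi^2}=+\infty$. However, the gap you flag is fatal as it stands, and it is not a technicality that can be dismissed. Local uniform convergence $\phi_{\gamma_n}\to\phi_\infty$ carries no information about the Bohr mean of $f_n=\phi_{\gamma_n}'/\phi_{\gamma_n}$ in the limit; indeed, in the model situation where Hypothesis \ref{hyp} holds, the locally uniform limit $\phi_\infty$ is the a.p.\ eigenfunction $\vp_1$, which is bounded between two positive constants and whose logarithmic derivative has Bohr mean $0$, not $-\underline{\mu}$. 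So the ``transfer'' you need is genuinely false in general; it is precisely its failure that encodes $\underline\mu=0$. Neither of your two suggested repairs closes this: the Bochner-hull compactness idea requires equi-almost-periodicity of $(f_n)_n$, which is exactly what is in question, and the second route (comparing $\phi_{\gamma_n}$ with $\phi_{\gamma_n}^{\mathrm{left}}$ and ``confronting exponential rates'') is left entirely unspecified — comparing two families that converge to the same local limit but have different decay rates at opposite infinities again requires control beyond compacta.

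The paper's proof sidesteps the asymptotics altogether. It introduces $\tilde\phi_\gamma$ (your $\phi_\gamma^{\mathrm{left}}$) and $q_\gamma=\tilde\phi_\gamma'/\tilde\phi_\gamma-\phi_\gamma'/\phi_\gamma$, then observes the algebraic identity that $\varphi_\gamma:=\sqrt{\tilde\phi_\gamma\phi_\gamma}$ satisfies $(\L-\gamma)\varphi_\gamma=-\tfrac14 a\,q_\gamma^2\varphi_\gamma$; this is a Riccati/Hardy-weight computation, essentially the same as in Lemma~\ref{lem-convexitymu}. The crucial point is that $q_\gamma\to0$ \emph{uniformly} on $\R$ as $\gamma\searrow\lambda_1$; this is proved by contradiction, translating to the bad points $x_n$ and passing to a \emph{limit operator} $\L^*$, for which the limit of $\varphi_\gamma(\cdot+x_n)/\varphi_\gamma(x_n)$ would be a positive supersolution of $(\L^*-\lambda_1)$ that is not a solution — contradicting criticality of $\L^*-\lambda_1$ via \cite[Theorem~3.9]{Pinsky}. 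Note this step uses the criticality of all limit operators, i.e.\ the full strength of Hypothesis~\ref{hyp-crit}, not merely that of $\L$ itself as in your proposal. Finally, $\tilde\mu(\gamma)+\mu(\gamma)$ equals the uniform mean of $q_\gamma$, hence $\le\|q_\gamma\|_\infty\to0$, and since both terms are positive one gets $\mu(\gamma)\to0$. The conceptual difference is thus that the paper converts a statement about long-range averages into a purely \emph{local} supersolution/criticality statement via the $\sqrt{\tilde\phi_\gamma\phi_\gamma}$ trick; your approach, which works directly with the long-range quantity, cannot be completed without an independent argument for equi-almost-periodicity, and none is offered.
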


\begin{proof}
Consider the analogue $\tilde\phi_\gamma$ of $\phi_\gamma$
but with imposed
decay at $-\infty$, namely,
\begin{equation*}
\L\tilde\phi_{\gamma}= \gamma \tilde\phi_{\gamma} \ \hbox{
in } \R, \quad \tilde\phi_{\gamma}(0)=1, \quad \lim_{x\to -\infty} 
\tilde\phi_{\gamma}(x)=0.
\end{equation*}
The function $\tilde\phi_\gamma(-x)$ fulfils the same properties as
$\phi_\gamma$. In particular the limit
$$\tilde\mu(\gamma) := 
\lim_{x\to\pm\infty} \frac{1}{x}\ln \tilde\phi_\gamma (x)$$
exists and it is positive. Consider the combination
$\varphi_\gamma:=\sqrt{\tilde\phi_\gamma\phi_\gamma}$.
The same computation as in the proof of 
Lemma \ref{lem-convexitymu} reveals that $\varphi_\gamma$ satisfies the
equation 
\footnote{ The function $\frac14a(x)q_\gamma^2$ is an 
{\em optimal Hardy weight} in the sense of \cite{Hardy}, which implies in 
particular that the operator $\L-\gamma+\frac14a(x)q_\gamma^2$ is critical. 
However, one cannot deduce from this the criticality of 
$\L-\lambda_1$ (and of its limit operators), but we need to impose it as an 
assumption.}
\begin{equation}\label{vpg}
(\L-\gamma)\varphi_\gamma=-\frac14 a(x)q_\gamma^2\,\varphi_\gamma,\quad
\text{with }q_\gamma=\frac{\tilde\phi_\gamma'}{\tilde\phi_\gamma}
-\frac{\phi_\gamma'}{\phi_\gamma}.
\end{equation} 
We claim that $q_\gamma$ converges uniformly 
 to 0 in $\R$ as $\gamma\searrow \lambda_1$.
Suppose by contradiction that this is not the case.
Then there exist $\e>0$ and two sequences $(\gamma_n)_n$ and 
$(x_n)_n$ such that $\gamma_n\searrow \lambda_1$ and 
$|q_{\gamma_n}(x_n)|\geq\e$ for all $n\in\N$.
Applying the Harnack inequality and then elliptic estimates to
$\tilde\phi_{\gamma_n}$ and $\phi_{\gamma_n}$ we see that the $(q_{\gamma_n})_n$
are equibounded and equicontinuous on $\R$.
Passing to the limit along (a subsequence of) the translations by $x_n$ in 
\eqref{vpg}, we find that the functions
$\varphi_\gamma(\cdot+x_n)/\varphi_\gamma(x_n)$ converge to 
a positive solution $\vp^*$ of 
$$(\L^*-\lambda_1)\varphi^*=-\frac14a^*(x)q^2\,\varphi^*,$$
where $\L^*$ is a limit operator associated with $\L$ and $a^*$, $q$ are the 
limits of $(a(\cdot+x_n))_n$ and $(q_{\gamma_n}(\cdot+x_n))_n$
respectively. Since $|q(0)|\geq\e$, we deduce that the operator
$\L^*-\lambda_1$ admits a positive supersolution which is not a 
solution. This contradicts the criticality of $\L^*-\lambda_1$ due 
to \cite[Theorem~3.9 at p.~152]{Pinsky}. We have therefore shown that
$q_\gamma\to0$ 
uniformly in $\R$ as~$\gamma\searrow \lambda_1$. Finally, because
$$\tilde\mu(\gamma)+\mu(\gamma)=\lim_{x\to+\infty}\frac1x\int_0^x 
q_\gamma\leq\|q_\gamma\|_\infty,$$
and $\tilde\mu(\gamma)$, $\mu(\gamma)$ are positive, we infer that
both $\tilde\mu(\gamma)$ and $\mu(\gamma)$ tend to $0$ as $\gamma\searrow 
\lambda_1$.
%
\end{proof}
%

\begin{rmk}\label{rmk:hyp}
As Hypothesis \ref{hyp} implies Hypothesis \ref{hyp-crit} by  
Proposition \ref{prop:hypimply}, it follows from Proposition
\ref{prop:underlinemu} that $\underline{\mu}=0$ when Hypothesis \ref{hyp}
is fulfilled. 
Owing to Lemma \ref{lem-convexitymu}, this implies in particular 
that $w^*$ is attained. Therefore, Theorem~\ref{thm:gtf} is a consequence of 
 Theorem \ref{thm-apgen}.
\end{rmk}

\begin{rmk}\label{rmk:c}
As explained in Section \ref{sec:hyp}, Hypotheses \ref{hyp} and \ref{hyp-crit} 
are fulfilled if $c$ is constant. However, in such case, the limit 
of $\mu (\gamma)$ can be derived directly without using the theory of critical 
operators.
Assume indeed that $c$ is a positive constant. 
On one hand, taking $\phi\equiv1$ in \eqref{charI} yields $\lambda_1\leq c$, on
the other, we know from Proposition \ref{lem-caraclambda1} that
$\lambda_1\geq c$. Thus, $\lambda_1=c$.
We use now the same type of argument as Zlatos in the proof of 
\cite[Theorem 1.1]{ZlatosKPP}, even if our aim is different. 
 Let $\delta>0$, take $\gamma >c$ and define $u(x):=a(x)  
\phi_{\gamma}'(x)/\phi_{\gamma}(x)$. 
This function is a.p.~- being the product of two a.p.~functions, see, e.g., 
\cite[Theorem 1.13]{Bes} - and satisfies 
$$u' +u^{2}/a(x)= \gamma - c \quad \hbox{ in } \R.$$
Moreover, as $\big( a(x) \phi_{\gamma}'\big)' = (\gamma - c)\phi_{\gamma}>0$ in
$\R$, the function $\phi_\gamma$ does not admit any local maximum and, as
$\phi_{\gamma}(+\infty)=0$, it is thus nonincreasing. Hence $u$ is nonpositive. 
Now, it is easy to check that 
$$u^{2}(x)\leq (\gamma - c)\sup_{\R}a  \quad \hbox{ for all } x\in \R,$$
because if the above inequality failed for some $x_0\in\R$, $u$ would be 
decreasing in $(-\infty,x_0)$, contradicting the almost periodicity.
It follows that 
$$ |\phi_{\gamma}'(x)/\phi_{\gamma}(x)| \leq 
\frac{\sqrt{\gamma-c}\,\sup_{\R}\sqrt{a}}{\inf_{\R}a} \quad 
\hbox{ for all } x\in \R.$$
Recalling that $\mu(\gamma)$ is equal to the average of 
$\phi_{\gamma}'/\phi_{\gamma}$, we derive
$$|\mu (\gamma )| \leq \frac{\sqrt{\gamma-c}\,\sup_{\R}\sqrt{a}}{\inf_{\R}a} 
\quad 
 \hbox{ for all } \gamma >c,$$
from which \eqref{mu=0} follows because $\lambda_1=c$.
\end{rmk}


\section{Construction of the fronts} \label{sec:construction}

%
%

\subsection{A preliminary result on a.p.~linear operators}

Consider an arbitrary elliptic operator defined for all $\phi \in\mathcal{C}^2 (\R)$ by
\begin{equation}
 \mathcal{M} \phi := a(x)\phi'' + b(x) \phi' + c(x) \phi,
\end{equation}
with $a$, $b$, $c$ a.p.~in $x$ and $\inf_\R
a>0$.

\begin{lem}\label{lem-zetatheta}
 Assume that there exist $\delta'\in\R$ and a positive function $\zeta
\in\mathcal{C}^2 (\R)$ so that 
$$-\mathcal{M} \zeta \geq \delta' \zeta\quad\text{in }\R,\qquad
\lim_{|x|\to +\infty} \frac{1}{x}\ln \zeta (x) = 0.$$ Then for all $\delta
<\delta'$, there exists
$\theta \in\mathcal{C}^2 (\R) \cap L^\infty (\R)$ satisfying 
$$-\mathcal{M} \theta\geq \delta \theta\quad\text{in }\R,\qquad\inf_\R
\theta>0.$$
\end{lem}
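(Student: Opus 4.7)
Plan: The approach I would take consists of two stages: first, a pointwise rescaling $\zeta\mapsto\zeta^\alpha$ to trade some slack, then a compactness/limit construction driven by the almost periodicity of $a,b,c$ and the subexponential growth of $\zeta$.

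For the first stage, one computes for $\alpha\in(0,1)$
\[
\mathcal{M}(\zeta^\alpha)=\alpha\zeta^{\alpha-1}\mathcal{M}\zeta+(1-\alpha)c\,\zeta^\alpha+a\,\alpha(\alpha-1)\zeta^{\alpha-2}(\zeta')^2,
\]
and since $\alpha(\alpha-1)<0$ and $\mathcal{M}\zeta\leq -\delta'\zeta$, this implies $-\mathcal{M}(\zeta^\alpha)\geq\bigl[\alpha\delta'-(1-\alpha)\sup_{\R} c\bigr]\zeta^\alpha$. Taking $\alpha$ close enough to $1$ makes the bracketed constant exceed some intermediate $\delta''\in(\delta,\delta')$. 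Since $\zeta^\alpha$ still satisfies $\ln\zeta^\alpha(x)/x\to 0$ as $|x|\to\infty$, I may replace $(\zeta,\delta')$ by $(\zeta^\alpha,\delta'')$ and henceforth assume that the supersolution comes with a strict surplus slack $\delta''-\delta>0$ over the target.

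For the second stage, I pick a sequence $x_n\to+\infty$ of almost-periods of $a,b,c$ (by Bochner's characterisation this is possible, with $a(\cdot+x_n), b(\cdot+x_n), c(\cdot+x_n)\to a,b,c$ uniformly on $\R$) and consider the normalisations $\zeta_n(x):=\zeta(x+x_n)/\zeta(x_n)$. By the Harnack inequality and interior $C^{2,\alpha}$ elliptic estimates, a subsequence converges in $C^2_{\mathrm{loc}}(\R)$ to a positive $\zeta^*$ with $\zeta^*(0)=1$ and $-\mathcal{M}\zeta^*\geq\delta''\zeta^*$ on $\R$ (the limit operator coincides with $\mathcal{M}$ by the choice of $x_n$). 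Setting $\theta:=\zeta^*$ will then work, provided one can arrange that $\zeta^*$ is bounded above and bounded away from $0$ uniformly on $\R$, not just on compact sets. This is the main obstacle: Harnack alone only yields local ratio bounds $\sup_{[-R,R]}\zeta_n/\inf_{[-R,R]}\zeta_n\leq C_R$ that may blow up with $R$, so $x_n$ must be chosen to lie in arbitrarily long intervals where $\zeta$ stays essentially flat. The subexponential hypothesis $\ln\zeta(x)/x\to 0$ is precisely what makes such selection feasible: a Riccati analysis of $v:=\zeta'/\zeta$, using $av'\leq-av^2-bv-(c+\delta')$, first forces $v$ to be uniformly bounded on $\R$ (else $v$ would either blow up in finite time or $\ln\zeta$ would grow linearly, contradicting subexponentiality); then a pigeonhole/averaging argument on successive intervals $I_k=[kL,(k+1)L]$, exploiting $\frac{1}{NL}|\ln\zeta(NL)-\ln\zeta(0)|\to 0$, produces a diagonal sequence of shifts $x_n$ on which the normalised $\zeta_n$ are bounded in a fixed interval $[C^{-1},C]$, giving uniform two-sided bounds on the limit $\zeta^*$ and completing the construction.
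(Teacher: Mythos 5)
Your first stage is formally correct (the computation of $\mathcal{M}(\zeta^\alpha)$ and the sign of the extra term for $\alpha\in(0,1)$ both check out), but it is also superfluous: the hypothesis already gives slack $\delta'-\delta>0$, and raising $\zeta$ to a power $\alpha<1$ strictly shrinks that slack to $\alpha\delta'-(1-\alpha)\sup c-\delta$, while leaving boundedness of the function unchanged ($\zeta^\alpha$ is bounded iff $\zeta$ is). So this step buys nothing.

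The genuine gap is in the second stage, exactly at the point you yourself flag as ``the main obstacle.'' The Riccati bound on $v=\zeta'/\zeta$ is fine (the inequality $av'\leq -av^2-bv-(c+\delta')$ together with finite-time blow-up comparison does force $\|v\|_\infty<\infty$; one does not even need subexponentiality for this). But the ``pigeonhole/averaging'' step is not a proof, and I don't believe it can be completed as described. What you need to produce is a sequence $x_n\to\infty$ satisfying \emph{simultaneously}: (a) $a(\cdot+x_n),b(\cdot+x_n),c(\cdot+x_n)\to a,b,c$ uniformly (so that the limit operator is $\mathcal{M}$ itself and the limiting inequality $-\mathcal{M}\zeta^*\geq\delta''\zeta^*$ holds for the \emph{original} operator), and (b) $\sup_{|y|\leq R_n}\bigl|\ln\zeta(x_n+y)-\ln\zeta(x_n)\bigr|\leq C$ with a constant $C$ independent of $n$ and $R_n\to\infty$ (so that $\zeta^*$ inherits two-sided bounds on all of $\R$, not merely on compacta). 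Condition (b) is not a consequence of $\frac1x\ln\zeta(x)\to 0$ together with $\|(\ln\zeta)'\|_\infty<\infty$: the averaging hypothesis controls $\Psi:=\ln\zeta$ along secant lines to the origin, but not the oscillation of $\Psi$ about such lines. For a Lipschitz $\Psi$ with $\Psi(x)/x\to0$ of the type $\Psi(x)\sim\sqrt{x}\sin\sqrt{x}$, the oscillation of $\Psi$ over $[x-n,x+n]$ is comparable to $n|\cos\sqrt{x}|$, which is bounded only for $x$ in a set whose relative density tends to $0$ as $n\to\infty$. And even when such a ``flat'' $x$ exists, there is no reason it should coincide with (or lie near) an $\e_n$-almost-period of $(a,b,c)$: almost-periods are merely relatively dense, so the two good sets could a priori be disjoint. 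Your argument does not exploit the constraint that $\Psi$ comes from a supersolution of $\mathcal{M}$, which is the only thing that could tie these two requirements together, and making that work would amount to re-proving a corrector theorem.

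This is precisely what the paper does instead, and the two proofs are genuinely different. The paper introduces the regularized equation $a u_\e''+a(u_\e')^2+bu_\e'+c=\e u_\e$, obtains a bounded solution $u_\e$ by Perron's method (constants $\pm M_\e$ are sub/supersolutions), and invokes the Lions--Souganidis homogenization result \cite[Lemma 3.3]{LionsSouganidisap}, which asserts that $\e u_\e$ converges \emph{uniformly on $\R$} to a constant $\lambda$. Testing with $\zeta=e^{u_\e}$ in the definitions of $\underline{\lambda_1}$ and $\overline{\lambda_1}$ shows $\underline{\lambda_1}=\lambda=\overline{\lambda_1}\leq-\delta'$, and then $\theta:=e^{u_\e}$ for $\e$ small enough that $\|\e u_\e-\lambda\|_\infty<\delta'-\delta$ is bounded, has positive infimum, and satisfies $\mathcal{M}\theta=\e u_\e\theta\leq-\delta\theta$. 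The uniform convergence of $\e u_\e$ to a constant is exactly the ``approximate corrector'' that your compactness argument is implicitly trying to manufacture, and it is a nontrivial theorem in almost-periodic homogenization, not something one can extract from Harnack plus a diagonal argument. So the proposal, while it correctly locates the difficulty, does not resolve it; the missing ingredient is the Lions--Souganidis-type corrector estimate, or an equivalent.
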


\begin{proof} Our proof makes use of the following two generalized principal
eigenvalues introduced in \cite{BN1}: 
\begin{equation}\label{eq:lambda1}
\underline{\lambda_1}:=\sup\{\lambda\ |\
\exists\zeta\in \mathcal{C}^2 (\R), \ \zeta>0, \ \lim_{|x|\to +\infty} \frac{1}{x} \ln \zeta (x)=0 \hbox{ such that } \mathcal{M}\zeta\geq\lambda\zeta \hbox{ in } \R \},
\end{equation}
\begin{equation}\label{eq:lambda12}
\overline{\lambda_1}:=\inf\{\lambda\ |\
\exists\zeta\in \mathcal{C}^2 (\R), \ \zeta>0, \ \lim_{|x|\to +\infty} \frac{1}{x} \ln \zeta (x)=0 \hbox{ such that } \mathcal{M}\zeta\leq\lambda\zeta \hbox{ in } \R \}.
\end{equation}
It has been proved in \cite{BN1} that $\underline{\lambda_1} \leq
\overline{\lambda_1}$. The hypothesis of the lemma reads
$\overline{\lambda_1} \leq -\delta'$. 

We will now use the same type of arguments as in \cite{BN1} in order to show 
that $\underline{\lambda_1} = \overline{\lambda_1}$, from which we will 
construct the desired supersolution $\theta$. 
As the result of this lemma plays a central role
in the proof of our main result, we briefly sketch the argument here for the
sake of completeness. 
Consider the family of equations defined, for $\e>0$, by:
\begin{equation}\label{eq-approxap}
a(x) u_\e''+a(x) (u_\e')^2 +b(x) u_\e' +c(x)=\e u_\e \quad\hbox{in } \R.
\end{equation}
For all $\e>0$, as $c$ is uniformly bounded, there exists some large $M_\e$ such that $-M_\e$ is a subsolution and 
$M_\e$ is a supersolution of (\ref{eq-approxap}). It follows from Perron's
method that there exists a (unique) bounded solution
$u_\e\in\mathcal{C}^{2}(\R)$ of 
equation (\ref{eq-approxap}).
It has been proved by Lions and Souganidis in \cite[Lemma
3.3]{LionsSouganidisap} that $\lambda = \lim_{\e\to 0} \e u_\e (x)$ exists
uniformly in $x\in\R$.

The function $\zeta:= e^{u_\e}$ satisfies $\mathcal{M}\zeta=\e u_\e\zeta$.
Taking it as a test function in the definitions of
$\underline{\lambda_1}$ and $\overline{\lambda_1}$ and letting $\e$ go to $0$
yields
$ \underline{\lambda_1} \geq \lambda\geq \overline{\lambda_1} $ and thus 
$ \underline{\lambda_1} =\lambda=\overline{\lambda_1}$. In particular, $\lambda \leq -\delta'.$
Take $\delta <\delta'$ and choose $\e>0$ small enough so that $|\e u_\e
(x) -\lambda| <
\delta'-\delta$ for all $x\in\R$. Then, the function $\theta := \zeta=e^{u_\e}$
satisfies 
$$
\mathcal{M}\theta=\e u_\e\theta \leq (\lambda+\delta'-\delta)\theta
\leq-\delta \theta\quad \hbox{ in } \R.
$$
As $u_\e$ is bounded, $\theta$ is also bounded and satisfies $\inf_\R\theta>0$.
\end{proof} 


\subsection{Construction of sub and supersolutions}

In the sequel, for $\gamma>\lambda_1$, we will let $\phi_\gamma$ denote the
function given by Proposition \ref{prop-phigamma}. We further set
$$\sigma_\gamma:=-\frac{\phi_\gamma'}{\phi_\gamma}.$$

\begin{lem} \label{lem:decmu}
 Assume that $w^*,\underline{w}$ defined in Theorem \ref{thm-apgen}
satisfy 
$w^*<\underline{w}$.
 Then $w^*$ is a minimum. Moreover, for all $w\in (w^*,\underline{w})$, 
there exists $\gamma>\lambda_1$ such that $w=\gamma/\mu (\gamma)$ and 
$w>\gamma'/\mu (\gamma')$ for $\gamma'-\gamma>0$ small enough.
\end{lem}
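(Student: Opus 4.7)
The plan is to work with the auxiliary function $f(\gamma):=\gamma/\mu(\gamma)$ on $(\lambda_1,\infty)$ and read off both conclusions from its boundary behaviour together with the intermediate value theorem. By Lemma \ref{lem-convexitymu}, $\mu$ is concave on the open interval $(\lambda_1,\infty)$, hence continuous there, and satisfies $\sqrt{\gamma-\lambda_1}\leq \mu(\gamma)\leq C\sqrt\gamma$. Therefore $f$ is continuous and positive on $(\lambda_1,\infty)$, the lower bound on $\mu$ yields $f(\gamma)\geq \sqrt\gamma/C\to+\infty$ as $\gamma\to+\infty$, and by definition of $\underline\mu$ one has $\lim_{\gamma\searrow\lambda_1}f(\gamma)=\lambda_1/\underline\mu=\underline w$, with the convention $\underline w=+\infty$ when $\underline\mu=0$.

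To prove that $w^*$ is attained under the assumption $w^*<\underline w$, I would pick $\delta>0$ with $w^*+\delta<\underline w$ and use the two boundary limits above to select $\gamma_1>\lambda_1$ and $\gamma_2>\gamma_1$ such that $f>w^*+\delta$ on $(\lambda_1,\gamma_1]\cup[\gamma_2,+\infty)$. Then
$$w^*=\inf_{(\lambda_1,\infty)}f=\inf_{[\gamma_1,\gamma_2]}f,$$
and the right-hand infimum is a minimum by continuity of $f$ on the compact interval; denote a minimizer by $\gamma^*\in[\gamma_1,\gamma_2]$, so that $f(\gamma^*)=w^*$.

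For the second part, I fix $w\in(w^*,\underline w)$ and look for $\gamma$ as the \emph{rightmost} preimage of $w$ below $\gamma^*$. Precisely, since $f(\gamma^*)=w^*<w$ while $\lim_{\gamma''\searrow\lambda_1}f(\gamma'')=\underline w>w$, the set
$$S:=\{\gamma''\in(\lambda_1,\gamma^*]\ :\ f(\gamma'')\geq w\}$$
is nonempty, bounded above by $\gamma^*$, and does not contain $\gamma^*$. Setting $\gamma:=\sup S$ places $\gamma\in(\lambda_1,\gamma^*)$. Approaching $\gamma$ from the left by elements of $S$ and using continuity gives $f(\gamma)\geq w$; conversely, every $\gamma'\in(\gamma,\gamma^*]$ lies outside $S$, hence $f(\gamma')<w$, which both yields $f(\gamma)\leq w$ in the limit $\gamma'\searrow\gamma$ and delivers the required strict inequality $w>f(\gamma')$ on a right neighbourhood of $\gamma$.

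There is essentially no serious obstacle: the structure of $\gamma\mapsto\gamma/\mu(\gamma)$ is entirely pinned down by Lemma \ref{lem-convexitymu}, and both statements reduce to elementary topological facts about a continuous function with prescribed limits at the endpoints of an interval. The one mildly delicate point is to treat uniformly the cases $\underline\mu>0$ and $\underline\mu=0$, which is handled by the convention $\underline w=+\infty$ in the latter case so that the same limit-based selection of $\gamma_1$ works throughout.
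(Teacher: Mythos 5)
Your proof is correct. For the attainment of $w^*$, both you and the paper rely on the same three facts: continuity of $\gamma\mapsto\gamma/\mu(\gamma)$ (from concavity of $\mu$), the limit $\underline{w}$ at $\lambda_1^+$ (from the definition of $\underline\mu$), and divergence at $+\infty$ (from the bound $\mu(\gamma)\leq C\sqrt\gamma$); your compactness trick with $\gamma_1,\gamma_2$ just makes the localization explicit. For the second part, however, your route is genuinely different from the paper's. You locate $\gamma$ directly as $\sup\{\gamma''\in(\lambda_1,\gamma^*]:\gamma''/\mu(\gamma'')\geq w\}$, a purely topological argument that only uses continuity and the boundary limits. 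The paper instead introduces the auxiliary function $g(\gamma):=w\mu(\gamma)-\gamma$, which is concave because $\mu$ is and $w>0$; from $g(\lambda_1^+)<0$, $g(\gamma^*)>0$ and $g(+\infty)=-\infty$, concavity forces $g$ to be strictly increasing up to its first maximal point $\gamma_M$, and the unique zero of $g$ in $(\lambda_1,\gamma_M)$ is the required $\gamma$, with the strict inequality on a right neighbourhood coming from strict monotonicity. The paper's concavity argument buys a sharper picture (a unique left zero and strict monotonicity there), whereas your intermediate-value argument is more elementary and would apply verbatim to any continuous $\mu$ with the right limits, making no use of concavity beyond continuity. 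Both are valid.
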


\begin{proof}
Recall that $\lambda_{1}\geq \inf c >0$ by Proposition \ref{lem-caraclambda1}. 
On one hand, by hypothesis,
 $$\lim_{\gamma \to \lambda_1} \frac\gamma{\mu 
(\gamma)}=\frac{\lambda_1}{\underline{\mu}}
=\underline{w}>w^*=\inf_{\gamma>\lambda_1}\frac{\gamma}{\mu(\gamma)}.$$
On the other, the upper bound for $\mu$ in \eqref{<mu<} yields $\gamma/\mu
(\gamma)\to+\infty$ as $\gamma \to 
+\infty$ . Hence, the function $\gamma \mapsto \gamma/\mu(\gamma)$,  
which is continuous because 
$\gamma\mapsto \mu (\gamma)$ is concave by Lemma~\ref{lem-convexitymu}, admits a
minimum on
$(\lambda_1,+\infty)$. Let $\gamma^*$ be a minimizing point.
 
 Take $w\in (w^*,\underline{w})$ and define $f(\gamma):= w\mu (\gamma) 
-\gamma$ for $\gamma\in(\lambda_1,+\infty)$.  There holds 
$$\lim_{\gamma\to+\infty}f(\gamma)= -\infty,\qquad
\lim_{\gamma\searrow\lambda_1}
f(\gamma)=w \underline{\mu}-\lambda_1<0.$$ 
 Moreover, $f$ is concave and $f(\gamma^*)>0$.
It follows that $f$ is strictly increasing for $\gamma$ less than its first 
maximal point $\gamma_M$, and that $f(\gamma_M)>0$. The unique zero of $f$ in 
$(\lambda_1 ,\gamma_M)$ provides us with the desired $\gamma$.
\end{proof}

Throughout this section, 
we take $w\in (w^*,\underline{w})$ and we let $\gamma>\lambda_1 $ be as in 
Lemma~\ref{lem:decmu}. \\
For a given a.p.~function $\sigma \in\mathcal{C}^1 (\R)$, we define the operator
$$
\L_\sigma\phi:= e^{\int_0^x \sigma} \L \left(e^{-\int_0^x
\sigma} \phi\right)
= \big( a(x)\phi'\big)' -2 a(x)\sigma\phi' +
\left(a(x)\sigma^2-(a(x)\sigma)'+c(x)\right)\phi.
$$

\begin{prop} \label{prop-theta}
There exist $\delta>0$, $\e\in(0,1)$ and a function $\theta \in\mathcal{C}^2
(\R)
\cap L^\infty (\R)$ satisfying
\begin{equation*} 
\inf_\R \theta>0,\qquad
-\L_{(1+\e)\sigma_\gamma} \theta\geq
\left(\delta-(1+\e)\gamma\right) \theta\quad\text{in }\;\R.
\end{equation*}
\end{prop}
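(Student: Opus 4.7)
The plan is to apply Lemma~\ref{lem-zetatheta} to the operator
$$
\mathcal{M} := \L_{(1+\e)\sigma_\gamma} - (1+\e)\gamma.
$$
To build a test function for that lemma, I will exploit the fact that for any $\gamma_1>\lambda_1$ the weight $e^{-\int_0^x(1+\e)\sigma_\gamma}$ is a positive multiple of $\phi_\gamma^{1+\e}$; hence, setting
$$
\zeta := \phi_{\gamma_1}\,\phi_\gamma^{-(1+\e)},
$$
the conjugate $e^{-\int_0^x(1+\e)\sigma_\gamma}\zeta$ is a positive multiple of $\phi_{\gamma_1}$, and the very definition of $\L_\sigma$ together with $\L\phi_{\gamma_1}=\gamma_1\phi_{\gamma_1}$ gives $\L_{(1+\e)\sigma_\gamma}\zeta = \gamma_1\zeta$, so that $-\mathcal{M}\zeta = ((1+\e)\gamma-\gamma_1)\zeta$.

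I will then tune $\gamma_1$ and $\e$ so as to fulfil both requirements of Lemma~\ref{lem-zetatheta}. By Proposition~\ref{prop-phigamma}(ii), the sublinear-log condition $\frac1x\ln\zeta(x)\to 0$ as $|x|\to\infty$ amounts to
$$
\mu(\gamma_1)=(1+\e)\mu(\gamma),
$$
which I take as the definition of $\e$ once $\gamma_1$ is fixed. The required positivity of $\delta':=(1+\e)\gamma-\gamma_1$ is then equivalent to $\gamma_1/\mu(\gamma_1) < \gamma/\mu(\gamma) = w$, and this is exactly what the second property in Lemma~\ref{lem:decmu} guarantees for any $\gamma_1>\gamma$ sufficiently close to $\gamma$. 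Continuity of $\mu$ (concave, hence continuous, by Lemma~\ref{lem-convexitymu}) further makes the resulting $\e>0$ as small as desired, in particular~$\e<1$.

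It remains to check that $\mathcal{M}$ is an a.p.~elliptic operator, so that Lemma~\ref{lem-zetatheta} applies. Expanding $\L_{(1+\e)\sigma_\gamma}$ one sees that its coefficients are algebraic combinations of $a$, $a'$, $c$, $\sigma_\gamma$ and $\sigma_\gamma'$. The almost periodicity of $\sigma_\gamma$ is Lemma~\ref{lem-ap}, while the identity
$$
\sigma_\gamma' \;=\; \sigma_\gamma^{\,2} \;-\; \frac{\gamma - c + a'\sigma_\gamma}{a},
$$
obtained by differentiating $\sigma_\gamma=-\phi_\gamma'/\phi_\gamma$ and using $\L\phi_\gamma=\gamma\phi_\gamma$, yields the same for $\sigma_\gamma'$. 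Lemma~\ref{lem-zetatheta} then produces, for every $\delta\in(0,\delta')$, a bounded function $\theta\in\mathcal{C}^2(\R)$ with $\inf_\R\theta>0$ and $-\mathcal{M}\theta\geq\delta\theta$, which is exactly the asserted inequality. The delicate point, and the whole reason for setting up the earlier machinery, is the simultaneous consistency of the two conditions on $\zeta$: it relies crucially on the strict decrease of $\gamma'/\mu(\gamma')$ at~$\gamma$ from Lemma~\ref{lem:decmu}, a property strictly stronger than the mere minimality $w\geq w^*$.
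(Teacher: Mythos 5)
Your proof is correct and follows essentially the same strategy as the paper: you take $\zeta=\phi_{\gamma_1}\phi_\gamma^{-(1+\e)}$ (the paper writes $\gamma_1=\gamma+\kappa$), compute $\L_{(1+\e)\sigma_\gamma}\zeta=\gamma_1\zeta$, tune the exponents so that the sublinear-log condition $\mu(\gamma_1)=(1+\e)\mu(\gamma)$ holds simultaneously with $\delta'=(1+\e)\gamma-\gamma_1>0$ (equivalently $\gamma_1/\mu(\gamma_1)<\gamma/\mu(\gamma)$, supplied by Lemma~\ref{lem:decmu}), and then apply Lemma~\ref{lem-zetatheta}. The only cosmetic differences are that you fix $\gamma_1$ first and define $\e$ from it, whereas the paper fixes $\e$ first and finds $\kappa$ by the intermediate value theorem, and that you check the almost periodicity of $(a\sigma_\gamma)'$ via the explicit Riccati identity for $\sigma_\gamma'$ rather than by invoking uniform continuity and Fink's theorem; both variants are fine.
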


\begin{proof} Due to our choice of $\gamma$, there exists $\e\in(0,1)$ so that 
$$\frac{\gamma}{\mu (\gamma)} > \frac{(1+\e)\gamma}{\mu \left( (1+\e) \gamma
\right)}.$$
Define 
$$F(\kappa):= \displaystyle\frac{1}{\mu (\gamma)} - \frac{1+\e}{\mu (\gamma +\kappa)}.$$ 
Then 
$F(\gamma \e ) =1/\mu (\gamma) - (1+\e)/\mu \left( (1+\e) \gamma \right)>0$
and $F(0)= -\e /\mu (\gamma)<0$. 
As $\mu$ is concave, it is continuous and thus $F$ is continuous. Hence, there
exists $\kappa\in (0,\e \gamma)$ so that $F(\kappa)=0$. 

Consider now  the function
$$\zeta (x):= \frac{\phi_{\gamma+\kappa}(x)}{\phi_\gamma^{1+\e}(x)} \hbox{ for all } x\in\R.$$
Keeping in mind that $e^{-(1+\e)\int_0^x \sigma_\gamma}=\phi_\gamma^{1+\e}$, 
we find that the positive function $\zeta$ satisfies, in $\R$,
$$\L_{(1+\e) \sigma_\gamma} \zeta = e^{(1+\e)\int_0^x \sigma_\gamma} \L \left(
e^{-(1+\e)\int_0^x \sigma_\gamma} \zeta\right)
=\frac{1}{\phi_\gamma^{1+\e}} \L \left(\zeta
\phi_\gamma^{1+\e}\right)=\frac{1}{\phi_\gamma^{1+\e}} \L \left(
\phi_{\gamma+\kappa}\right)= (\gamma+\kappa) \zeta.$$
It follows that 
$$-\big(\L_{(1+\e) \sigma_{\gamma}}-(1+\e)\gamma\big) \zeta = (\e\gamma-\kappa) \zeta
\quad \hbox{ in } \R.$$
Moreover, 
$$-\frac{1}{x} \ln \zeta (x) = -\frac{1}{x} \ln \phi_{\gamma +\kappa}(x)
+\frac{1+\e}{x} \ln \phi_\gamma (x),$$
and we know from Proposition \ref{prop-phigamma}(ii) that,  as $x\to +\infty$, 
the right-hand side tends to $\mu(\gamma+\kappa)-(1+\e) \mu (\gamma)$, which is 
equal to $0$ by the definition of $\kappa$. Notice that we obtain the same 
limit when $x\to -\infty$ thanks to the fact that $\mu(\gamma)$ is the {\em 
uniform} average of the a.p.~function $-\phi_\gamma'/\phi_\gamma$, that is, 
\eqref{average} holds uniformly with respect to $z\in\R$,  and likewise
$\mu(\gamma+\kappa)$ is the uniform average of 
$-\phi_{\gamma+\kappa}'/\phi_{\gamma+\kappa}$.
Hence, the hypotheses of Lemma \ref{lem-zetatheta} are fulfilled by the operator
$\mathcal{M}=\L_{(1+\e)\sigma_{\gamma}}-(1+\e)\gamma$ with
$\delta'=\e\gamma-\kappa>0$, and the statement of the proposition follows, up 
to showing that $\mathcal{M}$ has a.p.~coefficients.
Since $a$ and 
$\sigma_\gamma$ are a.p.~by hypothesis and Lemma \ref{lem-ap}, the only 
nontrivial check concerns the term $(a\sigma_{\gamma})'$. We know from
\cite[Theorem 
1.16]{Fink} that, in order to prove that $(a\sigma_{\gamma})'$ is a.p., it is
sufficient 
to show that $a\sigma_{\gamma}$ is uniformly continuous, which is readily
achieved  
applying the Harnack inequality and a priori estimates to the function 
$\phi_\gamma$.
\end{proof}

\bigskip

Define for all $(t,x) \in \R\times \R$:
$$
 \overline{u} (t,x):=  \min \big\{ 1, \phi_\gamma (x) e^{\gamma t} \big\}, 
$$
$$
 \underline{u} (t,x):=  \max \big\{ 0, \phi_\gamma (x) e^{\gamma t} - 
A\,\theta(x)\phi_{\gamma}^{1+\e}(x) e^{(1+\e)\gamma  t} \big\}, 
$$
where $\e$ and $\theta$ are given by Proposition \ref{prop-theta}
and $A$ is a positive constant that will be chosen later. 

\begin{prop} \label{prop-existenceu}
 There exists a (weak) solution $u\in\mathcal{C}(\R\times \R)$ of
(\ref{eqprinc}) satisfying $\underline{u}\leq u \leq \overline{u}$ in $\R\times\R$. 
Moreover, $u=u(t,x)$ is increasing in $t$. 
\end{prop}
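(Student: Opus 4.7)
The plan is to proceed in four steps: (i) verify that $\overline u$ and $\underline u$ are respectively a super- and a subsolution of \eqref{eqprinc} for $A$ chosen large enough, (ii) check that $\underline u\leq\overline u$, (iii) produce a solution $u$ between them via Perron's method, and (iv) conclude monotonicity in $t$ by exploiting the $t$-monotonicity of $\overline u$. Write $V(t,x):=\phi_\gamma(x)e^{\gamma t}$ and $W(t,x):=A\theta(x)\phi_\gamma^{1+\epsilon}(x)e^{(1+\epsilon)\gamma t}$, so that $\overline u=\min\{1,V\}$ and $\underline u=\max\{0,V-W\}$. Since $\L\phi_\gamma=\gamma\phi_\gamma$, i.e.\ $(a\phi_\gamma')'=(\gamma-c)\phi_\gamma$, a direct computation gives $V_t-(aV')'=cV\geq cV(1-V)$, so $V$ and the constant $1$ are supersolutions, and hence so is $\overline u$.

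For the subsolution property of $\underline u$, the only nontrivial check is on $\{V>W\}$. Using the conjugation identity $\L(\theta\phi_\gamma^{1+\epsilon})=\phi_\gamma^{1+\epsilon}\L_{(1+\epsilon)\sigma_\gamma}\theta$ (a consequence of $e^{-(1+\epsilon)\int_0^x\sigma_\gamma}=\phi_\gamma^{1+\epsilon}$) together with Proposition~\ref{prop-theta}, I obtain $\L(\theta\phi_\gamma^{1+\epsilon})\leq((1+\epsilon)\gamma-\delta)\theta\phi_\gamma^{1+\epsilon}$, hence $W_t-(aW')'\geq(\delta+c)W$ and therefore $(V-W)_t-(a(V-W)')'\leq c(V-W)-\delta W$. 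The desired subsolution inequality thus reduces to $c(V-W)^2\leq\delta W$. On $\{V>W\}$ one has $A\theta V^\epsilon\leq1$, so $V\leq(A\inf\theta)^{-1/\epsilon}\leq1$ provided $A\geq(\inf\theta)^{-1}$; combined with $W=A\theta V^{1+\epsilon}$, this gives $c(V-W)^2/W\leq(\sup c)V^{1-\epsilon}/(A\inf\theta)\leq(\sup c)/(A\inf\theta)$, which is at most $\delta$ for $A$ sufficiently large. Taking the max with $0$ preserves the subsolution property in the viscosity sense, and for the ordering: on $\{V>W\}$, $V\leq1$ yields $\underline u=V-W\leq V\leq\overline u$, while off this set $\underline u=0\leq\overline u$.

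To construct $u$, I set $\mathcal{S}:=\{v\in C(\R\times\R):v\text{ is a subsolution of \eqref{eqprinc}},\ v\leq\overline u\}$, which is nonempty as $\underline u\in\mathcal{S}$, and define $u(t,x):=\sup_{v\in\mathcal{S}}v(t,x)$. The standard parabolic Perron scheme, together with the uniform bound $u\leq1$ and interior regularity, yields a classical solution $u\in C^{1,2}(\R\times\R)$ of \eqref{eqprinc} with $\underline u\leq u\leq\overline u$. The monotonicity in $t$ comes from the crucial observation that $\overline u$ is itself nondecreasing in $t$: for every $h>0$ and $v\in\mathcal{S}$, the time-translate $v_h(t,x):=v(t-h,x)$ is again a subsolution (the equation is autonomous) and satisfies $v_h(t,x)\leq\overline u(t-h,x)\leq\overline u(t,x)$, so $v_h\in\mathcal{S}$; applying this to $v=u$ gives $u(t,x)\geq u(t-h,x)$. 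Strict monotonicity follows from the strong maximum principle applied to $\partial_t u\geq0$, which satisfies the linear parabolic equation $(\partial_t u)_t=(a(\partial_t u)')'+c(1-2u)\partial_t u$: if $\partial_t u\equiv0$, then $u$ is stationary, and $u(0,x)=\lim_{s\to-\infty}u(s,x)\leq\lim_{s\to-\infty}\overline u(s,x)=0$ for every $x$ forces $u\equiv0$, contradicting $\underline u>0$ at, e.g., $x=0$ for $t$ very negative (where $V/W=(A\theta(0))^{-1}e^{-\epsilon\gamma t}\to+\infty$).

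The main technical nuance I anticipate is calibrating $A$ in step (i) so that the absorbing quadratic $-c(V-W)^2$ dominates the linear deficit $-\delta W$ uniformly on the $(t,x)$-dependent region $\{V>W\}$; what makes this possible is precisely the a priori confinement $V\leq(A\inf\theta)^{-1/\epsilon}\leq1$ valid throughout that region.
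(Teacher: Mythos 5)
Your verification that $\overline u$ and $\underline u$ are super- and subsolutions follows the same computation as the paper — including the key conjugation identity $\mathcal L\bigl(\theta\phi_\gamma^{1+\epsilon}\bigr)=\phi_\gamma^{1+\epsilon}\mathcal L_{(1+\epsilon)\sigma_\gamma}\theta$ and the observation that $\underline u>0$ forces $V<(A\theta)^{-1/\epsilon}$, so that the choice of $A$ absorbs the quadratic term. Your calibration $A\geq\max\{(\inf\theta)^{-1},\sup c/(\delta\inf\theta)\}$ is slightly looser than the paper's sharper $A\geq(\sup c)^\epsilon/(\delta^\epsilon\inf\theta)$ but equally valid. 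The genuine divergence is in steps (iii)--(iv). The paper constructs $u$ by solving a sequence of Cauchy problems on $(-n,\infty)\times\R$ with datum $\overline u(-n,\cdot)$, obtains $\underline u\leq u_n\leq\overline u$ by comparison, shows $u_m(t,x)\geq u_n(t-h,x)$ for $m<n$ using the $t$-monotonicity of $\overline u$, and passes to a locally uniform limit; monotonicity of $u$ then drops out of the limiting inequality. You instead invoke a parabolic Perron scheme (supremum of subsolutions below $\overline u$) and extract monotonicity from the maximality of the supremum under the time translates $v_h$, which belong to $\mathcal S$ again because $\overline u$ is nondecreasing in $t$. Both routes exploit the identical structural fact (monotonicity of $\overline u$ plus time-autonomy), and your translation argument for monotonicity is clean. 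What the paper's iteration buys is that it sidesteps the nontrivial justification you compress into ``the standard parabolic Perron scheme'': on the unbounded space-time $\R\times\R$ one must carry out the bump construction, argue semicontinuity of the upper and lower envelopes, and invoke a comparison principle in the absence of initial or lateral boundary data — all doable, but each requires an argument that the Cauchy-sequence approach obtains for free from standard parabolic estimates and the ordering $\underline u\leq u_n\leq\overline u$. Your strict-monotonicity argument (strong maximum principle for $\partial_t u\geq0$, then $u$ stationary would force $u\leq\lim_{s\to-\infty}\overline u=0$, contradicting $\underline u>0$ at $(t,0)$ for $t\ll0$) is the same as the paper's, merely spelled out a bit more.
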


\begin{proof} 
Direct computation reveals that $\phi_\gamma (x) e^{\gamma t}$ is a 
supersolution on the whole $\R\times \R$ of~(\ref{eqprinc}), hence 
$\overline{u}$ is a generalized supersolution of the same equation. 
Take~$(t,x) \in \R \times \R$ so that $ \underline{u} (t,x) >0$ and set for 
short $\zeta:=\phi_\gamma(x)e^{\gamma t}$. One has:
\[\label{eq-underlineu} \begin{array}{rcl}
\underline{u}_t - \big(a(x)\underline{u}_x\big)_x - c(x) \underline{u} &=& 
-A (1+\epsilon) \gamma\theta \phi_{\gamma}^{1+\epsilon}e^{
(1+\epsilon)\gamma t}+ Ae^{(1+\epsilon)\gamma t} 
\phi_{\gamma}^{1+\epsilon}\mathcal{L}_{(1+\epsilon)\sigma_{\gamma}}\theta\\
&&\\
&=& A\zeta^{1+\e}[\L_{(1+\e)\sigma_\gamma}\theta-(1+\e)\gamma \theta]\\
&&\\
&\leq&-A\delta \theta \zeta^{1+\e}.\\
                 \end{array}
\]
Therefore, as $0$ obviously solves (\ref{eqprinc}), for $\underline u$ to be a subsolution it is 
sufficient to choose $A$ so large that, for all $(t,x)$ such that 
$\underline{u} (t,x)>0$,  one has 
$$
A\delta \theta \zeta^{1+\e}\geq  c \zeta^2.$$
Observe that $\underline{u} (t,x)>0$ if and only if $A\theta 
(t,x)\zeta^{\epsilon}(t,x)<1$, i.e.,
$\zeta^{\epsilon-1}(t,x)>(A\theta(t,x))^{1/\e-1}$, whence the goal is achieved 
for
$$A\geq \frac{\sup_\R c^{\e}}{\delta^{\e} \inf_{\R}\theta}.$$
The above observation also shows that $\underline{u}<(A\theta)^{-1/\e}$, and 
thus $A$ can be chosen in such a way that, in addition, $\underline{u}<1$,
whence $\underline{u} \leq \overline{u}$. 
A standard argument then provides us with a solution $\underline{u}\leq u \leq \overline{u}$.
Let us recall such argument and show that $u$ inherits from $\overline{u}$ the monotonicity
in $t$.

Define the sequence of function $(u_n)_n$ in the following way:
$u_n$ is the solution to (\ref{eqprinc}) for $t>-n$ with initial condition
$u_n(-n,x)=\overline u(-n,x)$. By the comparison principle, the~$u_n$ satisfy
$$\forall t>-n,\ x\in\R,\quad
\underline u(t,x)\leq u_n(t,x)\leq\overline u(t,x).$$ 
Thus, for $m,n\in\N$ with $m<n$ and for any $0<h<1$, using the monotonicity of $\overline u$ we get
$$u_m(-m,x)=\overline u(-m,x)\geq \overline u(-m-h,x) \geq u_n(-m-h,x).$$
Observe that $u_n(\cdot-h,\cdot)$ is also a solution of (\ref{eqprinc}), whence the comparison principle
yields 
\begin{equation}\label{incseq}
\forall m<n,\ 0<h<1,\ t>-m,\ x\in\R,\quad
u_m(t,x)\geq u_n(t-h,x).
\end{equation}
By interior parabolic estimates, $(u_n)_n$ converges locally uniformly (up to subsequences) 
to an entire solution $\underline{u}\leq u \leq \overline{u}$ of (\ref{eqprinc}).
Then, passing to the limit as $m,n\to\infty$ (along a subsequence) in \eqref{incseq} we 
eventually infer that $u(t,x)\geq u(t-h,x)$ for all $(t,x)\in\R\times\R$ and $0<h<1$.
This means that $u(t,x)$ is nondecreasing in $t$.
If the monotonicity were not strict, 
the parabolic strong maximum principle would imply
that $u$ is constant in time, contradicting the fact that $\underline{u}\leq u \leq \overline{u}$.
\end{proof}


\subsection{Asymptotics of the profile $U$}

We define 
\begin{equation} \label{def-U} U (z,x):= u\left( \frac{1}{\gamma}\int_0^x
\sigma_\gamma-z,x \right) \hbox{ for all } (z,x) \in\R\times \R,\end{equation}
where $u$ is the function given by Proposition \ref{prop-existenceu} and, we recall, 
$\sigma_\gamma:=-\phi_\gamma'/\phi_\gamma$.

\begin{lem}\label{lem:cvU}
 The function $U=U(z,x)$ is decreasing in $z$ and satisfies 
 \begin{equation} \label{eq-cvU}
 \lim_{z\to +\infty} U(z,x)=0, \quad  \lim_{z\to -\infty} U(z,x)=1, \quad \hbox{ uniformly in } x\in \R. 
 \end{equation}
\end{lem}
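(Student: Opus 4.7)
Monotonicity of $U$ in $z$ is immediate from the strict monotonicity in $t$ of $u$ given by Proposition~\ref{prop-existenceu}, since $t=\tfrac{1}{\gamma}\int_0^x\sigma_\gamma-z$ is decreasing in $z$. For the $+\infty$ limit, I would use the supersolution bound $u\le\overline u$ together with the identity $\int_0^x\sigma_\gamma=-\ln\phi_\gamma(x)$ (which follows from $\sigma_\gamma=-\phi_\gamma'/\phi_\gamma$ and $\phi_\gamma(0)=1$). The factor $\phi_\gamma(x)\,e^{\int_0^x\sigma_\gamma}$ then collapses to $1$, giving $U(z,x)\le e^{-\gamma z}$, hence $U(z,x)\to 0$ uniformly in $x$ as $z\to+\infty$.

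The $-\infty$ limit is the substantive part. My first step is to extract from $\underline u$ a \emph{uniform} positive lower bound on $U$ for $z$ sufficiently negative. Setting $\zeta:=\phi_\gamma(x)e^{\gamma t}$, one has $\underline u(t,x)=\max\{0,\zeta-A\theta(x)\zeta^{1+\e}\}$; its maximum over $t$ is attained at $\zeta^*(x):=\bigl((1+\e)A\theta(x)\bigr)^{-1/\e}$ with value $\tfrac{\e}{1+\e}\zeta^*(x)$, bounded below by some $\kappa>0$ uniformly in $x$ because $\theta$ is bounded with positive infimum. The corresponding optimal time $t^*(x)$ satisfies $\tfrac{1}{\gamma}\int_0^x\sigma_\gamma-t^*(x)=-\tfrac{1}{\gamma}\ln\zeta^*(x)$, a bounded function of $x$. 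Hence, for all $z$ smaller than some constant $-C$, one has $\tfrac{1}{\gamma}\int_0^x\sigma_\gamma-z\ge t^*(x)$, and the monotonicity of $u$ in $t$ gives $U(z,x)\ge u(t^*(x),x)\ge\underline u(t^*(x),x)\ge\kappa$, uniformly in $x$.

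To upgrade this to $U\to 1$, I argue by contradiction and compactness. Assume $U(z_n,x_n)\to\ell<1$ along some $z_n\to-\infty$, set $t_n:=\tfrac{1}{\gamma}\int_0^{x_n}\sigma_\gamma-z_n$, and consider $v_n(t,x):=u(t+t_n,x+x_n)$. By almost-periodicity of $a,c,\sigma_\gamma$ and standard parabolic estimates, a subsequence converges in $\mathcal{C}^{1,2}_{loc}$ to an entire classical solution $u^*$ of the limit Fisher-KPP equation with a.p.~coefficients $a^*,c^*$; this $u^*$ is nondecreasing in $t$, lies in $[0,1]$, and satisfies $u^*(0,0)=\ell$. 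Translating the uniform lower bound (explicitly, $v_n(t,x)\ge\kappa$ whenever $t\ge\tfrac{1}{\gamma}\int_0^x\sigma_\gamma(\cdot+x_n)+z_n+C$, whose right-hand side tends to $-\infty$ for fixed $(t,x)$) yields $u^*\ge\kappa$ on $\R\times\R$. Monotonicity in $t$ then makes $u^-(x):=\lim_{t\to-\infty}u^*(t,x)\in[\kappa,1]$ a classical bounded stationary solution of $(a^*(u^-)')'+c^*u^-(1-u^-)=0$. Writing $v:=1-u^-\in[0,1-\kappa]$, which satisfies $(a^*v')'=c^*v(1-v)\ge 0$, the quantity $a^*v'$ is nondecreasing; if $v'(x^*)\ne 0$ somewhere, integrating this inequality and using $\inf a^*>0$ would force $v$ to grow to $+\infty$ at $+\infty$ or at $-\infty$, contradicting the bound $v\le 1-\kappa$. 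Thus $v$ is constant, and from $c^*v(1-v)=0$ with $c^*>0$ and $v\le 1-\kappa$ we obtain $v\equiv 0$, i.e.\ $u^-\equiv 1$. Monotonicity of $u^*$ in $t$ and the upper bound $u^*\le 1$ then give $u^*\equiv 1$, contradicting $u^*(0,0)=\ell<1$.

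I expect the main obstacle to be the second step: getting a \emph{uniform}-in-$x$ positive lower bound on $U$. The key is the precise algebraic cancellation between the factor $e^{-\gamma z}$ coming from the change of variables and the decay of $\phi_\gamma$, thanks to which all the $x$-dependence concentrates into the bounded quantity $\zeta^*(x)$. The remaining steps combine standard parabolic compactness with a rigidity property of bounded stationary solutions of one-dimensional KPP equations with positive-infimum reaction.
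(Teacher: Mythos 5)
Your proof is correct, and it takes a genuinely different route from the paper in its handling of the $z\to-\infty$ limit. The monotonicity and $z\to+\infty$ parts are identical. For the uniform positive lower bound, the paper simply plugs $t=\frac1\gamma\int_0^x\sigma_\gamma - z$ into $\underline u$ to obtain $U(z,x)\ge e^{-\gamma z}(1-Me^{-\epsilon\gamma z})$, which is positive for $z$ large \emph{positive}, and then uses the monotonicity of $U$ in $z$ to deduce $\inf_{(-\infty,z]\times\R}U>0$ for all $z$; your alternative — optimizing $\underline u$ over the time variable to locate the explicit value $\frac{\epsilon}{1+\epsilon}\zeta^*(x)\ge\kappa>0$, noting that the corresponding optimal time occurs at a bounded value of the $z$-variable, and then invoking the monotonicity of $u$ in $t$ — is an equivalent and slightly more quantitative way to the same conclusion. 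The more substantive divergence is the upgrade from a positive lower bound to the value $1$. The paper defines $\vartheta:=\lim_{z\to-\infty}\inf_x U(z,x)$, passes to the limit along a minimizing sequence to obtain a solution $p^\infty\ge\vartheta$ of a limit equation attaining the value $\vartheta$ at $(0,0)$, and evaluates the PDE at that interior minimum to get $c^*(0)\vartheta(1-\vartheta)\le0$, hence $\vartheta=1$. You instead argue by contradiction, pass to a parabolic limit $u^*\ge\kappa$ with $u^*(0,0)=\ell<1$, send $t\to-\infty$ to extract a stationary solution $u^-\in[\kappa,1]$, and rule out nonconstant bounded stationary solutions with positive infimum via the ODE observation that $a^*v'$ is monotone when $v:=1-u^-\ge0$, $1-v\ge\kappa$. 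Both arguments are sound; the paper's is more compact (one limit, then a pointwise maximum-principle evaluation), while yours reduces the question to a clean rigidity lemma for the one-dimensional stationary problem at the cost of an extra compactness pass.
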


\begin{proof} 
The monotonicity in $z$ of $U$ follows from the monotonicity in $t$ of $u$. 
One has 
$$U (z,x) \leq \overline{u} \left( \frac{1}{\gamma}\int_0^x \sigma_\gamma-z,x
\right)
\leq \phi_\gamma (x) \exp \left( \int_0^x \sigma_\gamma-\gamma
z\right)=e^{-\gamma z}.$$ 
Similarly, for $(z,x) \in\R\times \R$,
$$\begin{array}{rcl} U (z,x) &\geq& \underline{u} \left( \frac{1}{\gamma}\int_0^x \sigma_\gamma-z,x
\right) \geq e^{-\gamma z} - A\theta (x) \phi_{\gamma}^{1+\e}(x)
e^{(1+\e)\int_0^x \sigma_\gamma - (1+\e)\gamma z} \\
&\geq & e^{-\gamma z} - A\left(\sup_\R\theta\right) e^{- (1+\e)\gamma z}.\\ 
\end{array}$$ 
Namely, $U$ satisfies
\begin{equation}\label{eq:exp}
\forall (z,x)\in\R\times\R,\quad
e^{-\gamma z}(1-Me^{-\e\gamma z})\leq U(z,x)\leq e^{-\gamma z},
\end{equation}
for some positive constant $M$. 
From the second inequality we deduce that $U(z,x) \to 0$ as $z\to +\infty$
uniformly with respect to $x\in\R$. From the first one, we see that
$\inf_{x\in\R} U (z,x)>0$ for $z$ large enough, and therefore, because of the monotonicity in $z$,
\begin{equation}\label{eq:inf}
\forall z\in\R,\quad \inf _{(-\infty,z]\times\R}\, U>0.
\end{equation}
%
%
%
%
%
In particular, the following quantity is positive:
$$
\vartheta:=\lim_{z\to-\infty}\left(\inf_{x\in\R}U(z,x)\right).
$$
To conclude the proof we need to show that $\vartheta=1$.
Let $(x_n)_n$ in $\R$ be such that $U(-n,x_n)\to\vartheta$ as $n\to\infty$.
Consider the family of functions $(p^n)_{n}$ defined by
$$p^n(t,x):=u\left(
\frac{1}{\gamma}\int_0^{x_n}\sigma_\gamma+n-t,x_n+x\right)=
U\left(\frac{1}{\gamma}\int_{x_n}^{x_n+x}\sigma_\gamma-n+t,x_n+x\right).$$
These functions satisfy $p^n(0,0)=U(-n,x_n)\to\vartheta$ as $n\to\infty$ and,
for $(t,x)\in\R\times\R$, $\liminf_{n\to\infty}p^n(t,x)\geq\vartheta$.
Moreover, by interior parabolic estimates, they converge, up to subsequences,
to a function $p^\infty$ satisfying $\partial_t
p^\infty-\big( a^{*}(x)p^\infty_{x}\big)_{x} = c^*(x)p^\infty(1-p^\infty)$ in $\R\times\R$, where
$a^{*}$ and $c^*$ are the uniform limits of $\big(a(\cdot+x_n)\big)_n$ and $\big(c(\cdot+x_n)\big)_n$ (up to extraction). Evaluating this equation at
the minimum point $(0,0)$ of $p^\infty$ yields
$c^*(0)\vartheta(1-\vartheta)\leq0$. Since $c^*(0)\geq\inf_\R c>0$ and
$0<\vartheta\leq1$, we eventually get $\vartheta=1$.
\end{proof}


\subsection{Almost periodicity of the profile $U$}
%

The function $U$ defined by (\ref{def-U}) solves 
\begin{equation}\label{deg}
-\sigma^2(x)a(x)\partial_{zz}U-\partial_x\big( a(x)\partial_x
U\big)+2\sigma(x)a(x)\partial_{zx}U-
((a\sigma)'(x)+1)\partial_z U=c(x)U(1-U),
\end{equation}
for $z\in \R$, $x\in \R$, 
with $\sigma:=\sigma_\gamma/\gamma$. 
We say that a function $V$ is a sub (resp.~super) solution of (\ref{deg}) in a
domain
$\mathcal{O}\subset\R^2$ if $v(t,x):=V(\int_0^x\sigma-t,x)$
is a classical sub (resp.~super) solution of (\ref{eqprinc}) for
$(\int_0^x\sigma-t,x)\in\mathcal{O}$.

The following result is an easy consequence of the weak Harnack inequality.

\begin{lem}\label{lem:I}
Let $I$ be an open interval, $\sigma\in W^{1,\infty}(\R)$ and $U^1,\ U^2$ be
respectively a sub and a supersolution of (\ref{deg})
which are uniformly continuous and
satisfy $0\leq U^1\leq U^2\leq1$ in $I\times\R$. If 
$$\exists z\in I,\quad\inf_{x\in\R}(U^2-U^1)(z,x)=0,$$ then 
$$\forall z'\in I,\ z'>z,\quad\inf_{x\in\R}(U^2-U^1)(z',x)=0.$$
\end{lem}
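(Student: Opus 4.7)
The strategy is to transcribe the problem back to the $(t,x)$ variables of the original parabolic equation. Set $u^i(t,x):=U^i\big(\int_0^x\sigma-t,\,x\big)$; then $u^1$ is a subsolution and $u^2$ a supersolution of (\ref{eqprinc}) on the region $\{(t,x):\int_0^x\sigma-t\in I\}$, so the nonnegative difference $w:=u^2-u^1\geq 0$ satisfies the linear parabolic inequality
$$
\partial_t w - \big(a(x)\partial_x w\big)_x - c(x)\big(1-u^1-u^2\big)w \;\geq\; 0,
$$
whose zero-order coefficient is bounded since $u^1,u^2\in[0,1]$. Choose a sequence $(x_n)_n\subset\R$ realizing the hypothesis, i.e.~$(U^2-U^1)(z,x_n)\to 0$, and set $t_n:=\int_0^{x_n}\sigma-z$, so that $w(t_n,x_n)\to 0$.

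Fix $z'\in I$ with $z'>z$. Then $(U^2-U^1)(z',x_n)=w\big(t_n-(z'-z),\,x_n\big)$ evaluates $w$ at an \emph{earlier} time than $t_n$, and propagation of smallness backward in time is the natural direction for a nonnegative supersolution. To implement this I would recenter by setting $w_n(s,y):=w(s+t_n,\,y+x_n)$. Each $w_n$ is a nonnegative supersolution of a linear parabolic equation whose coefficients $a(\cdot+x_n)$ and $c(\cdot+x_n)\big(1-u^1-u^2\big)(\cdot+t_n,\cdot+x_n)$ are uniformly bounded in $n$. Because $z,z'$ both lie in the open interval $I$ and $\sigma\in W^{1,\infty}(\R)$, an elementary check shows that a fixed neighborhood $\mathcal V$ of the segment $\{(s,0):s\in[-(z'-z),0]\}$ is contained in the $(s,y)$-domain of definition of every $w_n$.

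By the parabolic Harnack inequality together with interior regularity, the family $(w_n)_n$ is equibounded and equicontinuous on $\mathcal V$. Extracting a subsequence, and using the almost periodicity of $a,c$ (and of $\sigma$) to ensure uniform convergence of their translates, one obtains a nonnegative limit $w_\infty$ on $\mathcal V$ that is a supersolution of the limit linear parabolic equation and satisfies $w_\infty(0,0)=0$. The parabolic strong minimum principle then forces $w_\infty\equiv 0$ on the backward-in-time part of $\mathcal V$, and in particular $w_\infty\big(-(z'-z),0\big)=0$. Consequently $(U^2-U^1)(z',x_n)=w_n\big(-(z'-z),0\big)\to 0$, which yields $\inf_{x\in\R}(U^2-U^1)(z',x)=0$.

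The step I expect to require the most care is verifying the existence of the common neighborhood $\mathcal V$ uniformly in $n$, since this is what permits the compactness argument and application of the strong minimum principle on a fixed domain. This is a direct consequence of the openness of $I$ and the boundedness of $\sigma$, but must be written out. The remainder is the standard compactness plus strong-minimum-principle machinery, which is precisely the supersolution content of the weak Harnack inequality invoked in the statement.
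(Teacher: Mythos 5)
Your proof is correct and captures the same essential mechanism as the paper's: translate $U^1,U^2$ back to $(t,x)$ variables, recenter along the minimizing sequence $(x_n)$, observe that the nonnegative difference $w_n$ is a supersolution of a linear parabolic equation with bounded zero-order coefficient on a cylinder of fixed size (this is where $I$ open and $\sigma$ bounded enter, exactly the point you flagged), and then propagate the smallness $w_n(\text{later time})\to0$ to earlier times. The only difference is in how this propagation is executed. The paper applies the parabolic weak Harnack inequality (Lieberman, Theorem~7.37) directly to the sequence $(w_n)$: this yields, \emph{uniformly in $n$}, an $L^p$ bound on $w_n$ at earlier times by the infimum at later times, and the pointwise conclusion then follows from equicontinuity of the $w_n$ (which, as you can note, comes for free from the uniform continuity of $U^i$ and the boundedness of $\sigma$, no interior regularity needed). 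You instead pass to a locally uniform limit $w_\infty$ (using Arzel\`a--Ascoli, plus almost periodicity of $a,c,\sigma$ and equicontinuity of $u^i_n$ to get a limit supersolution) and then invoke the strong minimum principle on $w_\infty$. Both work; the paper's route is slightly leaner because it avoids having to justify that the limit $w_\infty$ is a weak supersolution of a limit equation, and because the quantitative Harnack bound makes the ``uniformity in $n$'' explicit rather than implicit in the compactness. Your route is arguably more conceptually transparent. As you yourself observe, the strong minimum principle you use is precisely the qualitative content of the weak Harnack inequality for supersolutions, so the two arguments are essentially two phrasings of the same estimate.
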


\begin{proof}
Assume that there exists $z\in I$ such that $\inf_{x\in\R}(U^2-U^1)(z,x)=0$ and
take $z'\in I$, $z'>z$. 
Let $(x_n)_n$ be such that $(U^2-U^1)(z,x_n)\to 0$ as
$n\to\infty$. Take $h>0$ small enough so that $z-h,z'+2h\in I$ and take
$0<\rho<h/\|\sigma\|_{L^\infty(\R)}$. Then, for $i\in\{1,2\}$ and $n\in\N$,
the function
$$u^i_n(t,x):=U^i\left(\int_0^x\sigma(x_n+s)ds-t,x_n+x\right)$$
is well defined in $[-z'-h,-z]\times(-\rho,\rho)$.
We define there the function $w_n:=u^2_n-u^1_n$, which is nonnegative 
and satisfies
$$\partial_t w_n-\partial_x \big( a(x+x_{n})\partial_x w_n\big)\geq
c(x_n+x)(1-u^2_n-u^1_n)w_n,\quad -z'-h
\leq t\leq-z,\ |x|<\rho.$$
Therefore, taking $z<z_1<z_2<z'$, the parabolic weak Harnack inequality (see
e.g. Theorem 7.37 in \cite{Lie}) provides two constants $p,C>0$ such that
$$\forall n\in\N,\quad
\|w_n\|_{L^p((-z',-z_2)\times(-\frac\rho2,\frac\rho2))}
\leq C\inf_{(-z_1,-z)\times(-\frac\rho2,\frac\rho2)}w_n
\leq C w_n(-z,0).$$
Whence, since $\lim_{n\to\infty}w_n(-z,0)=0$, the $(w_n)_n$ converge to $0$ in
$L^p((-z',-z_2)\times(-\frac\rho2,\frac\rho2))$. 
One readily deduces from the equicontinuity of the $(w_n)_n$ that the above
$L^p$ convergence to $0$ can hold only if 
$w_n(-z',0)\to0$ as $n\to\infty$. We eventually infer that 
$\inf_{x\in\R}(U^2-U^1)(z',x)=0$.
\end{proof}

The proof of the almost periodicity of $U$ is based on a
sliding method. 

\begin{prop} \label{prop:apU}
The profile $U=U(z,x)$ is a.p.~in $x$
uniformly with respect to $z\in \R$. 
\end{prop}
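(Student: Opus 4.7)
The plan is to verify Bochner's criterion (Definition~\ref{defalmostper}): given any sequence $(x_{n})_{n\in\N}$ in $\R$, extract a subsequence such that $U(z,x_{n_{k}}+x)$ converges uniformly in $(z,x)\in\R\times\R$. The almost periodicity of $a$, $a'$, $c$ together with that of $\sigma_{\gamma}$ (Lemma~\ref{lem-ap}) permits a first extraction for which the translated coefficients converge uniformly to limits $a^{\ast}$, $(a^{\ast})'$, $c^{\ast}$, $\sigma^{\ast}$. Interior parabolic estimates applied to $u$ and transferred to $U$ via \eqref{def-U} then yield a further extraction along which $U(z,x_{n}+x)\to V(z,x)$ locally uniformly, with $V$ solving the analog of \eqref{deg} with starred coefficients. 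By passing to the limit in uniform-in-$x$ estimates, $V$ is monotone decreasing in $z$, satisfies the uniform asymptotics $V\to 1$ at $z\to -\infty$ and $V\to 0$ at $z\to +\infty$ (inherited from Lemma~\ref{lem:cvU}), and obeys the two-sided exponential bound \eqref{eq:exp}.

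To upgrade local uniform to uniform convergence I would argue by contradiction: suppose there exist $\delta>0$ and sequences $(z_{n}),(y_{n})$ with $|U(z_{n},y_{n}+x_{n})-V(z_{n},y_{n})|\geq\delta$. The uniform limits at $z\to\pm\infty$ rule out $|z_{n}|\to\infty$, so up to a subsequence $z_{n}\to z_{\infty}\in\R$. Performing a second extraction by translating the already-extracted objects by $y_{n}$ (using that $a^{\ast}, c^{\ast}, \sigma^{\ast}$ are themselves almost periodic as uniform limits of a.p.~functions), one obtains two limits $V^{1}$ and $V^{2}$ of $U(z,x_{n}+y_{n}+x)$ and $V(z,y_{n}+x)$ respectively, both solving the same doubly-limit equation with coefficients $a^{\ast\ast}, c^{\ast\ast}, \sigma^{\ast\ast}$ and both enjoying the properties listed above. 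The inequality $|V^{1}(z_{\infty},0)-V^{2}(z_{\infty},0)|\geq\delta$ will contradict the identity $V^{1}\equiv V^{2}$, which remains to be proved.

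To establish $V^{1}=V^{2}$ one applies the sliding method. Set
\[
\tau^{\ast}:=\inf\bigl\{\tau\geq 0 \ :\ V^{1}(z+\tau,x)\leq V^{2}(z,x)\ \forall (z,x)\in\R\times\R\bigr\}.
\]
The set is nonempty: for $\tau$ large, the upper bound $V^{1}(z+\tau,x)\leq e^{-\gamma(z+\tau)}$ lies below the lower bound on $V^{2}$ when $z\geq z_{0}(\tau)$, while the uniform convergence $V^{2}\to 1$ at $z\to -\infty$ combined with the upper bound on $V^{1}$ closes the gap on the complementary half-line. By continuity $V^{1}(z+\tau^{\ast},x)\leq V^{2}(z,x)$ everywhere. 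Assuming $\tau^{\ast}>0$, the exponential bounds give
\[
V^{2}(z,x)-V^{1}(z+\tau^{\ast},x)\geq e^{-\gamma z}\bigl(1-e^{-\gamma\tau^{\ast}}-Me^{-\epsilon\gamma z}\bigr),
\]
which is strictly positive for $z$ large. By the contrapositive of Lemma~\ref{lem:I} applied to the solutions $V^{2}$ and $V^{1}(\cdot+\tau^{\ast},\cdot)$ on $I=\R$, one then obtains $\inf_{x\in\R}\bigl(V^{2}(z,x)-V^{1}(z+\tau^{\ast},x)\bigr)>0$ for every $z\in\R$. A refinement of this positivity---matching it, scale by scale, against the $C^{1}$-increment $V^{1}(z+\tau^{\ast}-h,x)-V^{1}(z+\tau^{\ast},x)$ using the precise rates in \eqref{eq:exp} at $z\to +\infty$ and the uniform convergence to $1$ at $z\to -\infty$---allows one to strictly decrement $\tau^{\ast}$ while preserving $V^{1}(z+\tau,x)\leq V^{2}(z,x)$, contradicting its minimality. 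The symmetric argument gives $V^{2}\leq V^{1}$, hence $V^{1}\equiv V^{2}$.

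I expect the main obstacle to be the final decrement of $\tau^{\ast}$: Lemma~\ref{lem:I} only delivers pointwise-in-$z$ strict positivity of the infimum $\inf_{x}\bigl(V^{2}(z,x)-V^{1}(z+\tau^{\ast},x)\bigr)$, whereas the gap is forced to vanish at both ends of the $z$-axis---exponentially at $+\infty$ and through the common limit $1$ at $-\infty$. Showing that this pointwise gap is, nevertheless, large enough in the appropriate uniform sense to absorb a small decrement of $\tau^{\ast}$ simultaneously on all of $\R\times\R$ is the delicate part, and is where the uniform-in-$x$ exponential asymptotics \eqref{eq:exp} and the monotonicity of $V^{1}$ in its first argument must be exploited carefully.
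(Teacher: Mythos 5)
Your overall scheme matches the paper's: verify Bochner's criterion, reduce by contradiction to comparing two locally-uniform limits $V^1,V^2$ of translates of $U$ that solve the same limit equation of the form \eqref{deg}, inherit the uniform asymptotics \eqref{eq-cvU} and the two-sided bound \eqref{eq:exp}, and then prove $V^1\equiv V^2$. The divergence---and the gap---is in that last step. You propose an additive $z$-sliding, setting $\tau^*=\inf\{\tau\ge0:\ V^1(z+\tau,x)\le V^2(z,x)\ \forall(z,x)\}$, and you try to drive $\tau^*$ to $0$. This breaks down at $z\to-\infty$: both $V^1$ and $V^2$ tend to $1$ there, but nothing in the paper controls the \emph{rates} of approach ---\eqref{eq:exp} constrains only the $z\to+\infty$ end. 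Consequently the nonemptiness of the sliding set is not established: for any fixed $\tau>0$ and $z$ very negative, $V^1(z+\tau,x)$ and $V^2(z,x)$ are both near $1$ with no ordering guaranteed, and the phrase ``the upper bound on $V^1$ closes the gap'' has no quantitative content ($V^1\le1$ does not beat $V^2\ge1-\eta$). The same lack of control at $-\infty$ blocks the strict decrement of $\tau^*$, as you yourself flag. This is a genuine gap, not a cosmetic one.

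The paper sidesteps the $-\infty$ end entirely by replacing the additive slide with the multiplicative comparison $\kappa^*:=\sup_{\R\times\R} V^1/V^2$. The ratio tends to $1$ uniformly at \emph{both} $z=\pm\infty$ (at $-\infty$ because both limits are $1$; at $+\infty$ by \eqref{eq:exp}), so $\kappa^*$ is finite and, if $\kappa^*>1$ (as the contradiction hypothesis forces), the supremum is attained at a finite $\bar z$ in the sense $\sup_x(V^1/V^2)(\bar z,\cdot)=\kappa^*$, i.e.\ $\inf_x(\kappa^*V^2-V^1)(\bar z,\cdot)=0$. The ingredient the additive approach lacks: $\kappa^*V^2$ is a \emph{supersolution} of \eqref{deg} precisely because $\kappa^*\ge1$ and the KPP nonlinearity $s\mapsto s(1-s)$ is concave, so Lemma~\ref{lem:I} applies without shifting in $z$ and propagates the zero infimum \emph{forward} to $z'>\bar z$, where \eqref{eq:exp} yields $\inf_x(\kappa^*V^2-V^1)(z',\cdot)\ge e^{-\gamma z'}(\kappa^*-1-\kappa^*Me^{-\epsilon\gamma z'})>0$ for $z'$ large, a contradiction. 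The far-left end never enters. To repair your argument you would either need to adopt this multiplicative normalization, or else establish quantitative rates for $V^i\to1$ as $z\to-\infty$, which the available estimates do not provide.
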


\begin{proof} 

Consider a sequence $(x_n)_n$ in $\R$. By the almost periodicity of $a$, $c$ and
$\sigma$, we can assume that, up to extraction of a subsequence, $(a(\cdot+x_n))_n$,
$(c(\cdot+x_n))_n$ and $(\sigma(\cdot+x_n))_n$ converge uniformly in $\R$.
We claim that $(U(\cdot,\cdot+x_n))_n$ converges uniformly in
$\R\times\R$.
Assume by contradiction that it is not the case. There exist then two
subsequences $(x^1_n)_n$,
$(x^2_n)_n$ of $(x_n)_n$ and some sequences $(y_n)_n$, $(z_n)_n$ in $\R$ 
such that
$$\liminf_{n\to\infty}(U(z_n,x^1_n+y_n)-U(z_n,x^2_n+y_n))>0.$$
By (\ref{eq-cvU}), $(z_n)_n$ is bounded.
Then, letting $\zeta$ be one of its limit points, the uniform continuity of $U$
yields
$$\liminf_{n\to\infty}(U(\zeta,x^1_n+y_n)-U(\zeta,x^2_n+y_n))>0.$$
For $i=1,2$ and $n\in\N$, define
$$u_n^i(t,x):=U\left(\int_0^x\sigma(x_n^i+y_n+s)ds-t,x_n^i+y_n+x\right).$$
We see that the $u_n^i$ satisfy (\ref{eqprinc}) with $a$ and $c$ 
translated by $x_n^i+y_n+x$, and that
$$\liminf_{n\to\infty}(u_n^1-u_n^2)(-\zeta,0)>0.$$
Moreover, $(a(\cdot+x_n^1+y_n))_n$, $(c(\cdot+x_n^1+y_n))_n$ and $(\sigma(\cdot+x_n^1+y_n))_n$
converge (up to subsequences) to some functions $a^*$, $c^*$
and $\sigma^*$ uniformly in $\R$. Since $(a(\cdot+x_n))_n$, $(c(\cdot+x_n))_n$ and
$(\sigma(\cdot+x_n))_n$ converge uniformly in $\R$, it turns out that also $(a(\cdot+x_n^2+y_n))_n$,
$(c(\cdot+x_n^2+y_n))_n$ and $(\sigma(\cdot+x_n^2+y_n))_n$
converge uniformly to $a^*$, $c^*$ and $\sigma^*$.
By parabolic estimates, we find that the $(u_n^i)_n$ converge (up to
subsequences) locally
uniformly to some functions $v^i$ which satisfy~(\ref{eqprinc}) with $a$ and $c$
replaced  by $a^*$ and $c^{*}$. We also have that 
$(v^1-v^2)(-\zeta,0)>0$. It is
straightforward to check that the functions $V^i$ defined by
$V^i(z,x):=v^i\left(\int_0^x\sigma^*-z,x\right)$ satisfy
$$\forall z,x\in\R,\quad V^i(z,x)=\lim_{n\to\infty}U(z,x_n^i+y_n+x),$$
and that they solve (\ref{deg}) with $a$, $c$ and $\sigma$ replaced by
$a^{*}$, $c^*$ and $\sigma^*$ respectively. 
Set
$$\kappa^{*}:= \sup_{\R\times\R}\frac{V^1}{V^2}.$$
The fact that both $V^1$ and $V^2$ inherit from $U$ the uniform convergence to 
$1$ as $z\to -\infty$, as well as the inequalities (\ref{eq:exp}), imply that
$V^1/V^2\to1$ as $z\to \pm\infty$ uniformly in $x\in \R$. 
Since they also inherit \eqref{eq:inf}, we infer that $\kappa^{*}$ is finite.
On the other hand,
we know that $\kappa^{*}>1$ because $(V^1-V^2)(\zeta,0)=(v^1-v^2)(-\zeta,0)>0$. 
As a consequence, using the uniform continuity of $V^1$ and $V^2$, we find that
$\kappa^{*}$ is attained at some finite $\bar z$, in the sense that
$$\sup_{x\in\R}\frac{V^1}{V^2}(\bar z,x)=\sup_{\R\times\R}\frac{V^1}{V^2}=
\kappa^{*}.$$
Direct computation shows that $\kappa^{*} V^{2}$ is a 
supersolution of (\ref{deg}), because $\kappa^{*}>1$. 
We can therefore apply Lemma \ref{lem:I}, 
deducing that
$$\forall z'>z, \quad  \inf_{x\in \R}(\kappa^{*}V^{2}-V^{1})(z',x)= 
0.$$
From this, taking $z'$ sufficiently large, we get a contradiction with (\ref{eq:exp}). 
\end{proof}


\subsection{Existence of fronts with supercritical speeds}


We can now complete the proof of Theorem 
\ref{thm-apgen}(i) in the case $w>w^{*}$.

\begin{prop} \label{cor-ap}
For any $w\in (w^*,\underline{w})$, the solution 
$$u(t,x)=U\left(\frac1\gamma\int_0^x\sigma_\gamma(y) dy-t,x\right)$$ 
constructed before, with $\gamma$ given by 
Lemma~\ref{lem:decmu}, is a generalized transition front with 
average speed $\gamma/\langle \sigma_\gamma \rangle=w$.
\end{prop}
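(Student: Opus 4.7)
The plan is to exhibit an interface $X:\R\to\R$ such that (\ref{limits}) holds for the constructed $u$, and then to verify the average-speed property. Set $\Sigma(x):=\int_0^x \sigma_\gamma(y)\,dy$. By Lemma~\ref{lem-ap} the function $\sigma_\gamma$ is a.p., and its uniform average equals $\mu(\gamma)$ (this is precisely how $\mu(\gamma)$ is defined in the proof of Proposition~\ref{prop-phigamma}(ii), up to a sign). Since $\mu(\gamma)>0$, $\Sigma$ tends to $\pm\infty$ at $\pm\infty$, and I may select $X(t)$ to be \emph{any} point with $\Sigma(X(t))=\gamma t$; no monotonicity of $\Sigma$ is needed.

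The main tool will be the classical fact that the ergodic average of an a.p.~function is attained uniformly in the base point: for every $\varepsilon>0$ there exists $L_0$ such that
\[
\forall y\in\R,\ \forall L\geq L_0,\quad\left|\tfrac{1}{L}\int_y^{y+L}\sigma_\gamma-\mu(\gamma)\right|<\varepsilon.
\]
With $\varepsilon<\mu(\gamma)/2$, this forces $\int_{X(t)}^{x+X(t)}\sigma_\gamma$ to tend to $+\infty$ as $x\to+\infty$ and to $-\infty$ as $x\to-\infty$, \emph{uniformly} in $t\in\R$. Writing
\[
u(t,x+X(t))=U\!\left(\tfrac{1}{\gamma}\int_{X(t)}^{x+X(t)}\sigma_\gamma(y)\,dy,\;x+X(t)\right)
\]
and invoking the uniform limits of $U(z,x)$ in $z$ supplied by Lemma~\ref{lem:cvU}, the conditions (\ref{limits}) follow at once.

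For the average speed, subtracting $\Sigma(X(s))=\gamma s$ from $\Sigma(X(t))=\gamma t$ gives $\int_{X(s)}^{X(t)}\sigma_\gamma=\gamma(t-s)$. The boundedness of $\sigma_\gamma$ (being a.p.) ensures $|X(t)-X(s)|\to+\infty$ whenever $t-s\to+\infty$. Moreover $X(t)-X(s)$ must be positive for $t-s$ large: otherwise, applying the uniform average property to the interval $[X(t),X(s)]$ would yield $\gamma(t-s)\leq-(\mu(\gamma)-\varepsilon)(X(s)-X(t))<0$, contradicting $t>s$. One final application of the uniform average property then gives
\[
\frac{\gamma(t-s)}{X(t)-X(s)}\longrightarrow\mu(\gamma)\quad\text{as }t-s\to+\infty,
\]
i.e.\ $(X(t)-X(s))/(t-s)\to \gamma/\mu(\gamma)=\gamma/\langle\sigma_\gamma\rangle=w$, where the last equality is the choice of $\gamma$ made in Lemma~\ref{lem:decmu}. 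I do not anticipate any serious obstacle; the entire argument rests on the uniformity of the ergodic limit for a.p.~functions, which is precisely what transforms the uniformity in $x$ already available for $U$ into the required uniformity in $t$.
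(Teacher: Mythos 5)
Your proof is correct and follows essentially the same route as the paper's: define $X(t)$ implicitly by $\int_0^{X(t)}\sigma_\gamma=\gamma t$, use the uniform ergodic average \eqref{uniformmean} of the a.p.\ function $\sigma_\gamma$ together with Lemma~\ref{lem:cvU} to obtain \eqref{limits}, and then exploit $\int_{X(s)}^{X(t)}\sigma_\gamma=\gamma(t-s)$ plus boundedness of $\sigma_\gamma$ to identify the average speed as $\gamma/\langle\sigma_\gamma\rangle=w$. The only cosmetic differences are that the paper fixes $X(t)$ to be the smallest admissible value rather than an arbitrary selection, and it leaves the positivity of $X(t)-X(s)$ for $t-s$ large implicit (it follows automatically from the sign of $\langle\sigma_\gamma\rangle$) while you make it explicit.
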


\begin{proof} 
Recall that, because $\sigma_\gamma$ is a.p., the following limit exists 
uniformly in $a\in\R$:
\begin{equation}\label{uniformmean}
\langle \sigma_\gamma\rangle=\lim_{x\to\pm\infty} 
\frac{1}{x}\int_a^{a+x}\sigma_\gamma.
\end{equation}
By construction, c.f.~in particular \eqref{average} and 
Lemma~\ref{lem:decmu}, we have that
$$\langle \sigma_\gamma \rangle=\langle-\phi_\gamma'/\phi_\gamma\rangle
=\mu(\gamma)=\gamma/w.$$
Since $\gamma/w>0$, this allows us to define $X(t)$, for $t\in\R$, as the 
smallest value for which  
$\int_0^{X(t)} \sigma_\gamma=\gamma t$.
%
%
One has
$$u(t,x+X(t)) = U \left(\frac1\gamma\int_{X(t)}^{X(t)+x}
\sigma_\gamma(y)dy\,,\,x+ X(t)\right),$$
whence, using \eqref{uniformmean} and \eqref{eq-cvU}, we eventually derive
\eqref{limits}.

Lastly, one has $\int_{X(s)}^{X(t)} \sigma_\gamma =\gamma(t-s)$. Hence,
$\gamma|t-s| \leq \|\sigma_\gamma\|_\infty |X(t)-X(s)|$ for all $s,t\in
\R$, and therefore
$|X(t)-X(s)| \to +\infty$ as $t-s\to +\infty$. It follows that 
$$ \frac{\gamma(t-s)}{ X(t) - X(s)} = \frac{1}{ X(t) -
X(s)}\int_{X(s)}^{X(t)} \sigma_\gamma \to \langle \sigma_\gamma \rangle 
\quad\hbox{ as } t-s\to +\infty,$$
that is, $u$ has average speed $\gamma/\langle \sigma_\gamma \rangle=w$.
\end{proof}


%
%

\subsection{Non-existence of fronts with speed less than $w^{*}$}

The following proposition immediately implies statements (ii) of Theorems 
\ref{thm:gtf} and \ref{thm-apgen}.

\begin{prop} \label{prop:nonexistence}
Let $u$ be a generalized transition front of equation (\ref{eqprinc}) and let 
$X$ be such that \eqref{limits} holds. Then
$$\forall s \in \R, \quad \liminf_{t\to+\infty} \frac{X(s+t)-X(s)}{t} \geq w^{*}.$$
In particular, there 
exists no generalized transition front with average speed $w<w^{*}$.
\end{prop}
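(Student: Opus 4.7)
We prove the propagation estimate by placing below $u$ a family of spreading sub-solutions of (\ref{eqprinc}) whose speeds can be made arbitrarily close to $w^*$.

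Fix $s\in\R$ and $\eta\in(0,1/2)$. For each $w\in(w^*,\underline{w})$, let $\gamma>\lambda_1$ be given by Lemma~\ref{lem:decmu}, so $w=\gamma/\mu(\gamma)$. Proposition~\ref{prop-existenceu} furnishes a sub-solution of (\ref{eqprinc}) of the form
\[
\underline{u}(t,x)=\max\{0,\;\phi_\gamma(x)e^{\gamma t}-A\theta(x)\phi_\gamma^{1+\varepsilon}(x)e^{(1+\varepsilon)\gamma t}\},
\]
with $A$ chosen large enough so that $\sup\underline{u}\in(\eta,1-\eta)$. From the asymptotics $\phi_\gamma(x)\sim e^{-\mu(\gamma)x}$ and $w=\gamma/\mu(\gamma)$, one verifies that, for any $\eta'\in(\eta,\sup\underline{u})$, the super-level set $\{\underline{u}(t,\cdot)\geq\eta'\}$ is a bounded interval whose position drifts rightward at asymptotic speed $w$ as $t\to+\infty$.

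The next step is to place a time-shifted copy $\underline{u}(\cdot-T,\cdot)$ below $u$. Using the transition front property, $u(s,x)\geq 1-\eta\geq\sup\underline{u}\geq\underline{u}(s-T,x)$ for $x\leq X(s)-M_\eta$. On $[X(s)-M_\eta,+\infty)$ one combines the pointwise upper bound $\underline{u}(s-T,x)\leq\phi_\gamma(x)e^{\gamma(s-T)}$, which tends to $0$ uniformly on this half-line as $T\to+\infty$ (since $\phi_\gamma$ is bounded there), with a suitable lower bound on $u(s,\cdot)$ stemming from its strict positivity and the uniform transition front property via parabolic Harnack estimates. For $T$ sufficiently large, $\underline{u}(s-T,\cdot)\leq u(s,\cdot)$ holds on all of $\R$, and the parabolic comparison principle yields $u(t,x)\geq\underline{u}(t-T,x)$ for every $t\geq s$ and $x\in\R$.

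At any point $x_t$ where $\underline{u}(t-T,\cdot)\geq\eta'$---equivalently $x_t=w(t-T)+O(1)$ by the previous description---we obtain $u(t,x_t)\geq\eta'>\eta$. Combined with $u(t,x)\leq\eta$ for $x\geq X(t)+M_\eta$, this forces $x_t\leq X(t)+M_\eta$, hence $X(s+t)\geq w(s+t-T)-M_\eta+O(1)$, which gives $\liminf_{t\to+\infty}(X(s+t)-X(s))/t\geq w$. Letting $w\searrow w^*$ concludes the proof. The main obstacle lies in the $x\to+\infty$ comparison in the placement step: both $\underline{u}(s-T,x)$ and $u(s,x)$ vanish, so their decay rates must be compatible. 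Controlling this is the only technical difficulty; the remaining steps constitute a standard spreading-type comparison adapted to the almost periodic setting.
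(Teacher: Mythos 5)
The crux of your argument is the ``placement step,'' where you seek to put $\underline{u}(s-T,\cdot)$ below $u(s,\cdot)$ on all of $\R$ for some large $T$, and you correctly flag the behavior at $x\to+\infty$ as the technical difficulty. Unfortunately this difficulty is not merely technical: it is a genuine gap that cannot be closed by Harnack-type lower bounds. The sub-solution satisfies $\underline{u}(s-T,x)\sim \phi_\gamma(x)e^{\gamma(s-T)}\sim e^{-\mu(\gamma)x+\gamma(s-T)}$ as $x\to+\infty$, whereas the only lower bound on $u(s,\cdot)$ available from the definition of a transition front together with Harnack inequalities has the form $u(s,x)\geq C e^{-\alpha(x-X(s))}$, where $\alpha$ is determined by the Aronson--Harnack constants and the a priori Lipschitz bound on $X$ from Remark~\ref{rmk:smooth}. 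There is no mechanism forcing $\alpha\leq\mu(\gamma)$; in fact $\mu(\gamma)$ can be arbitrarily small (it tends to $\underline{\mu}$, which vanishes under Hypothesis~\ref{hyp}), so for $w$ close to $w^*$ nothing rules out $\alpha>\mu(\gamma)$. In that case, however large $T$ is, there exist $x$ with $\underline u(s-T,x)>u(s,x)$, and the comparison you need never holds on the whole line.

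This is exactly the well-known obstruction in KPP spreading proofs: exponential sub-solutions extend to $+\infty$, but solutions emanating from compactly supported or fast-decaying data cannot be dominated from below by them at a single time slice. The paper sidesteps the issue entirely by \emph{not} comparing with $\underline u$ directly. Instead it places a small, compactly supported datum below $u(s,\cdot)$ on a bounded interval around $X(s)$ (which is trivially possible since $u(s,X(s))>2\epsilon_0$), and then invokes the spreading theorem of Berestycki--Nadin \cite[Theorem~2.3]{BN1} for compactly supported data, which already encapsulates the delicate sub-solution construction with truncation that makes the tail comparison legitimate. The remaining work in the paper's proof consists in translating the spreading speed $\min_{p>0}k_p/p$ into the quantity $w^*=\inf_{\gamma>\lambda_1}\gamma/\mu(\gamma)$, using the relation between $\mu$ and the eigenvalues $k_p$ established in \cite{BN1}. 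If you wish to retain your approach, you would need to replace $\underline{u}$ by compactly supported sub-solutions (with speeds strictly below, but approaching, $w^*$), which is essentially re-proving the cited spreading result.
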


\begin{proof}
For any $p\in \R$, we consider the second 
order elliptic operator with a.p.~coefficients
$$\L_{p}\phi:= e^{px}\mathcal{L} \big( e^{-px}\phi\big)$$
and we let $\underline{\lambda_{1}}
(\L_{p})$ and $\overline{\lambda_{1}} (\L_{p})$ denote the generalized 
principal eigenvalues defined by
(\ref{eq:lambda1}) and (\ref{eq:lambda12}) with $\mathcal{M}=\L_{p}$. We know 
from 
the proof of Lemma \ref{lem-zetatheta} that 
$\underline{\lambda_{1}} (\L_{p})=\overline{\lambda_{1}} (\L_{p})$, and we 
call this quantity $k_{p}$ in order to shorten the notation. 
It has been proved by Berestycki and the first author in \cite[Theorem 
2.3]{BN1} that if $v$ is a solution of 
equation 
(\ref{eqprinc}) associated with a continuous initial datum with compact support 
$v_{0}\geq0,\not\equiv0$, then 
$\lim_{t\to +\infty}v(t,wt)=1$ for all $w\in \big( 0, 
\min_{p>0}k_p/p\big)$. 

Consider now a generalized transition front $u$. Then there exists 
$\epsilon_{0}>0$ such that 
$$\forall s\in \R, \quad 2\epsilon_{0}< u\big(s,X(s)\big)<1-2\epsilon_{0},$$
see $(1.10)$ in \cite[Theorem 1.2]{BerestyckiHamelgtw2}.
Since $u$ is uniformly continuous by regularity estimates, there is 
$\ell>0$ such that $u\big(s,X(s)+x\big)>\epsilon_0$ for all $s\in 
\R$ and $|x|<\ell$. Take $s\in\R$. 
Comparing with the solution $v$ of equation (\ref{eqprinc}) for $t>s$,
associated with a datum at time $s\in\R$ supported in 
$\big(X(s)-\ell,X(s)+\ell\big)$ and smaller than $\epsilon_{0}$, we 
get that $u(s+t,x) \geq v(s+t,x)$ for all $t>0$, $x\in \R$, and thus 
$$\forall w\in \big(0, \min_{p>0}k_p/p\big),\quad
\lim_{t\to +\infty} u\big(s+t,X(s)+wt\big)=1.$$
The definition of generalized transition fronts then yields that there exists 
$L>0$ such that, for $t$ large enough (depending on $s$), $X(s)+wt\leq X(s+t)+L$.
This gives
$$\liminf_{t\to +\infty}\frac{X(s+t)-X(s)}{t} \geq w.$$
As this holds for all $w\in \big( 0, \min_{p>0}k_p/p\big)$, we get the 
conclusion providing we could show that $\min_{p>0}k_p/p = w^{*}$, where, we 
recall, $w^*:= \inf_{\gamma>\lambda_1}\gamma/\mu (\gamma)$ with
$\mu(\gamma)$ given by Proposition~\ref{prop-phigamma}.

It has also been proved in \cite{BN1} that $k_{p}\geq 
\lambda_{1}$ (Lemma 5.1), $k_{0}= \lambda_{1}$ (Step 2 of the proof of Theorem 5.1) and that 
$\mu:( \lambda_{1},+\infty)\to ( \underline{\mu},+\infty) 
$ is an homeomorphism with inverse $p\mapsto k_{p}$ (see the proof of 
\cite[Theorem 5.1]{BN1}).
Hence, we could rewrite
\begin{equation*} 
w^{*} = \inf_{\gamma> \lambda_{1}(\L,\R)}\frac{\gamma}{\mu (\gamma)}
= \inf_{p> \underline{\mu}}\frac{k_{p}}{
p}.
\end{equation*}
This concludes the proof if $\underline\mu=0$. Otherwise, since $k_{p}\geq 
\lambda_{1}$, $k_{0}= 
k_{\underline{\mu}}=\lambda_{1}$ and $p\mapsto k_{p}$ is convex, one has 
$k_{p} = \lambda_{1}$ for all $p\in (0,\underline{\mu})$. In particular, 
$k_{p}/p \geq \lambda_{1}/\underline{\mu}$ for $p \in (0,\underline{\mu})$. 
We thus conclude that 
$$w^{*} =  \inf_{p> \underline{\mu}}\frac{k_{p}}{p} =  \inf_{p>0}\frac{k_{p}}{p}.$$

\end{proof}


\subsection{The critical front} \label{sec:critical}

\begin{rmk}\label{rmk:smooth}
 We can always assume that the function $X$ for which a given front 
satisfies~\eqref{limits} is uniformly Lipschitz-continuous (and actually even 
$\mathcal{C}^\infty$). Indeed, we know from \cite[Proposition 4.2]{HR1} that
$$\sup_{(t,s)\in\R^2,\,|t-s|\leq1}|X(t)-X(s)|<+\infty.$$
Then, defining $\hat X$ as the linear interpolation of the function $X$ 
restricted to $\Z$, one readily sees that $\hat X\in 
W^{1,\infty}(\R)$ and that~\eqref{limits} holds true with $X$ replaced by $\hat
X$, because, denoting the integer part of $t\in\R$ by $[t]$, there holds
$$\sup_{t\in\R}|X(t)-\hat X(t)|\leq \sup_{t\in\R}|X(t)-X([t])|+
\sup_{t\in\R}|\hat X(t)-\hat X([t])|<+\infty.$$
Of course, one could use a smooth interpolation in order to get 
$\hat X\in \mathcal{C}^\infty(\R)$.
\end{rmk}

\begin{proof}[Proofs of Theorems \ref{thm:gtf} and \ref{thm-apgen} part (i) when $w=w^{*}$.]
We now assume that $w^{*}<\underline{w}$ and
consider the critical travelling wave $u$ associated with equation 
(\ref{eqprinc}), 
in the sense introduced by the first author in \cite{NadinCTW}, normalized by 
$u(0,0)=1/2$. Theorem 3.6 of \cite{NadinCTW} yields that $u$ is a generalized 
transition front (called spatial travelling wave in \cite{NadinCTW}), which is increasing with respect to $t$ (by Proposition 3.5 of \cite{NadinCTW}). It 
follows in particular that \eqref{limits} holds with $X:\R\to\R$ such 
that $u\big( t,X(t)\big)=1/2$ for all $t\in \R$. Moreover, the 
proof of this earlier result yields that if another generalized transition 
front satisfies \eqref{limits} with a function $Y$ instead of $X$, then 
there exists $L>0$ such 
that, for all $s_0\in \R$, one can find $s_1\in \R$ so that 
$$\forall t>0,\quad
X(s_{0}+t)-X(s_{0}) \leq Y(s_{1}+t) - Y(s_{1})+L.$$
Considering any generalized transition front with supercritical average 
speed $w\in(w^{*},\underline{w})$ constructed before, we thus obtain
$$\limsup_{t\to +\infty}\sup_{s_{0}\in\R}\frac{X(s_{0}+t)-X(s_{0})}{t} \leq 
\lim_{t\to +\infty}\sup_{s_{1}\in\R}\frac{Y(s_{1}+t) - Y(s_{1})+L}{t}=w.$$
This being true for $w$ arbitrarily close to $w^{*}$, we get 
\begin{equation} \label{X'<w*}
\limsup_{t\to +\infty}\sup_{s\in\R}\frac{X(s+t)-X(s)}{t} \leq w^{*}.
\end{equation}
By Remark \ref{rmk:smooth}, we know that there is a uniformly 
Lipschitz-continuous function, that we still call $X$, for which $u$ 
fulfils \eqref{limits}. Since this function is obtained as a bounded
perturbation of the 
previous $X$, \eqref{X'<w*} holds true (we just lose the information 
$u\big( t,X(t)\big)=1/2$). 
This allows us to rewrite \eqref{X'<w*} in terms of the {\em
upper mean} of $X'\in L^\infty(\R)$, which
is defined by
$$\um{X'}:=\lim_{t\to +\infty}\sup_{s\in\R}\int_s^{s+t}X'(\tau)\,d\tau.$$
Namely, we have $\um{X'}\leq w^{*}$.
The notion of upper mean, together with that of {\em least mean}:
$$\lm{X'}:=\lim_{t\to +\infty}\inf_{s\in\R}\int_s^{s+t}X'(\tau)\,d\tau,$$
has been introduced in \cite{NR1}. Actually, the above formulation for the 
least mean - from which the analogous one for the upper mean immediately 
follows - is not the original definition of~\cite{NR1}, but comes from 
Proposition 3.1 there, which 
shows in particular the existence 
of the limit. Clearly, $u$ has an average speed $w$ if and only if 
$\lm{X'}=\um{X'}=w$. Thus, in order to conclude the proof, we need to
show that $\lm{X'}\geq w^*$. 

To achieve our goal, we make use of another 
characterization of the least mean, provided by \cite[Proposition 4.4]{NR2}, 
which involves the $\omega$-limit set of the function. It implies the existence
of a sequence $(s_n)_{n}$ such that $X'(\cdot+s_n)$ converges 
weakly-$\star$ in $L^\infty(\R)$ to a function $g$ satisfying
$$\lim_{t\to+\infty}\frac{1}{t}\int_0^t g(\tau)\, d\tau=\lm{X'}.$$
Define the sequence of functions $(u_n)_{n}$ by
$$u_n(t,x):=u(t+s_n,x+X(s_n)).$$
It follows from interior parabolic estimates that $(u_n)_{n}$ converges 
locally uniformly  to a function $\tilde u$ satisfying 
\eqref{limits} with $X(t)=\int_0^t g$. Moreover, $a(\cdot+X(s_n))$, 
$a'(\cdot+X(s_n))$, $c(\cdot+X(s_n))$ converge uniformly in $\R$ (up to 
subsequences) to some functions $\tilde a$, $b=(\tilde a)'$, $\tilde c$ and 
therefore $\tilde u$ is a generalized transition front of the equation
$$
 v_t - \big(\tilde a(x)v_x\big)_x = \tilde c(x) v(1-v), \quad t\in\R,\ x\in\R.
$$
This equation fulfils the same set of standing assumptions as 
\eqref{eqprinc}. In particular, Proposition~\ref{prop-phigamma} holds 
true for the operator $\phi\mapsto\tilde\L \phi := \big(\tilde a(x) \phi'\big)' 
+ \tilde c(x) \phi$ and, for all $\gamma>\lambda_{1}(\tilde\L,\R)$, provides 
us with a unique positive solution to
$$
\tilde\L\tilde\phi_{\gamma}= \gamma \tilde\phi_{\gamma} \ \hbox{
in } \R, \quad \tilde\phi_{\gamma}(0)=1, \quad \lim_{x\to +\infty} 
\tilde\phi_{\gamma}(x)=0,
$$
which satisfies in addition $\tilde\mu (\gamma) := 
-\lim_{x\to +\infty} \frac{1}{x}\ln \tilde\phi_\gamma (x)>0$.

Let us show that $\lambda_1 (\tilde\L,\R)=\lambda_1(\L,\R)$ and that 
$\tilde\mu\equiv\mu$.
First, the operator $\tilde\L$ is a limit operator associated
with $\mathcal{L}$ in the sense of \cite{BerestyckiHamelRossi}, and therefore 
Lemma 5.6 there yields $\lambda_1 (\tilde\L,\R)=\lambda_1(\L,\R)$.
Next, consider the solution $\phi_\gamma$ provided by 
Proposition~\ref{prop-phigamma}. The sequence 
$\phi_\gamma(\cdot+X(s_n))/\phi_\gamma(X(s_n))$ converges in 
$\mathcal{C}^2_{loc}(\R)$ to a nonnegative function $\psi$ satisfying 
$\tilde\L\psi= \gamma \psi$ in $\R$ and $\psi(0)=1$. It follows from the strong 
maximum principle that $\psi$ is positive. We then compute
$$\lim_{x\to+\infty} \frac{1}{x}\ln 
\psi(x)=\lim_{x\to+\infty}\frac{1}{x}\int_0^x\frac{\psi'}\psi=
\lim_{x\to+\infty}\lim_{n\to\infty}\frac{1}{x}\int_{X(s_n)}^{X(s_n)+x}
\frac{\phi_\gamma'}{\phi_\gamma}=-\mu(\gamma),$$
where we have used that the a.p.~function $\phi_\gamma'/\phi_\gamma$ 
satisfies \eqref{average} uniformly in $z\in\R$.
Since $\mu(\gamma)>0$, this shows that $\psi$ decays to $0$ at $+\infty$ and 
therefore, by uniqueness, it coincides with $\tilde\phi_{\gamma}$. We 
eventually 
infer that $\tilde\mu(\gamma)=\mu(\gamma)$.
We can then apply Proposition~\ref{prop:nonexistence} to the front $\tilde u$ 
and obtain
$$\inf_{\gamma>\lambda_1 (\tilde\L,\R)} \displaystyle\frac{\gamma}{\tilde\mu 
(\gamma)}=w^{*}\leq
\liminf_{t\to+\infty} \frac{\int_0^{t}g(\tau)\,d\tau}{t}=\lm{X'}.$$
This concludes the proof. 
\end{proof}


\subsection{Proof of Lemma \ref{lem:nonempty}.}

\begin{proof}[Proof of Lemma \ref{lem:nonempty}]
For all $c_{0}>-\inf c$, let $\mu_{c_{0}}(\gamma)$ be 
the decay rate provided by Proposition \ref{prop-phigamma}(ii) but associated 
with 
$c(x)+c_{0}$ instead of $c (x)$. 
It is straightforward to see that $\lambda_{1}(\L+c_{0},\R)= 
\lambda_{1}(\L,\R)+c_{0}$ and that 
$\phi_{\gamma}$ satisfies
$$\big( a(x) \phi_{\gamma}'\big)'+\big(c(x)+c_{0}\big) \phi_{\gamma}= (\gamma+c_{0})\phi_{\gamma} \quad \hbox{in } \R.$$
Hence, it immediately follows that $\mu_{c_{0}}(\gamma+c_{0}) = \mu (\gamma)$ 
and then
that $\underline{\mu}$ does not change when one adds $c_0$ to $c(x)$. 

Now take $\gamma>\lambda_{1}(\L,\R)$. As $\mu (\gamma)>\underline{\mu}$ by 
Lemma \ref{lem-convexitymu}, for 
$c_{0}$ large enough we have that 
$$\mu (\gamma) \big( \lambda_{1}(\L,\R)+c_{0}\big)> \underline{\mu}\big(\gamma 
+ c_{0}).$$
In other words, setting $\tilde{\gamma}:= \gamma +c_{0}$, one gets
$$\frac{\mu_{c_{0}}(\tilde{\gamma})}{\tilde{\gamma}} > \frac{\underline{\mu}}{\lambda_{1}(\L+c_{0},\R)},$$
from which the result follows by taking the supremum over $\tilde{\gamma}> 
\lambda_{1}(\L+c_{0},\R)$.
\end{proof}


\section*{Compliance with Ethical Standards}

The authors declare that there are no conflicts of interest.


\end{document}